\font\smallit=cmti10
\newtheorem{theorem}{Theorem}
\newtheorem{corollary}[theorem]{Corollary}
\newtheorem{lemma}[theorem]{Lemma}
\newcommand{\R}{\mathbb{R}}
\newcommand{\Np}{\mathbb{Np}}
\newcommand{\GS}{\mathbb{GS}}
\newcommand{\s}{\mathbb{S}}
\newcommand{\LS}{\mathit{Ls}}
\newcommand{\LSo}{\overline{\mathit{Ls}}}
\newcommand{\LSu}{\underline{\mathit{Ls}}}
\newcommand{\RS}{\mathit{Rs}}
\newcommand{\RSo}{\overline{\mathit{Rs}}}
\newcommand{\RSu}{\underline{\mathit{Rs}}}
\newcommand{\Ls}{\mathit{Ls}}
\newcommand{\Lso}{\overline{\mathit{Ls}}}
\newcommand{\Lsu}{\underline{\mathit{Ls}}}
\newcommand{\Rs}{\mathit{Rs}}
\newcommand{\Rso}{\overline{\mathit{Rs}}}
\newcommand{\Rsu}{\underline{\mathit{Rs}}}
\newcommand{\GL}{{G^\mathcal{L}}}
\newcommand{\GR}{{G^\mathcal{R}}}
\newcommand{\HL}{H^\mathcal{L}}
\newcommand{\HR}{H^\mathcal{R}}
\newcommand{\XL}{X^\mathcal{L}}
\newcommand{\XR}{X^\mathcal{R}}
\renewcommand{\ge}{\geqslant}
\renewcommand{\le}{\leqslant}
\newcommand{\Num}[1]{\widehat{#1}}
\newcommand{\pura}[2]{\langle\emptyset^{#1}\!\mid \!\emptyset^{#2}\rangle}
\newcommand{\atom}[1]{\emptyset^{#1}}
\newcommand{\Conj}[1]{\overset{\leftrightarrow}{#1}}
\newcommand{\ConjS}[1]{\overset{\longleftrightarrow}{#1}}
\newcommand{\GB}{\mathbb{GS}}
\newcommand{\su}{\succcurlyeq}
\newcommand{\pr}{\preccurlyeq}
\theoremstyle{definition}
\newtheorem{definition}[theorem]{Definition}
\newtheorem{observation}[theorem]{Observation}
\newtheorem{remark}[theorem]{Remark}
\begin{document}
\begin{center}
\uppercase{\bf Guaranteed Scoring Games}
\vskip 20pt
{\bf Urban Larsson\footnote{Supported by the Killam Trust}}\\
{\smallit Dalhousie University, Canada}\\
{\bf Jo\~ao P. Neto\footnote{Work supported by centre grant (to BioISI, Centre Reference: UID/MULTI/04046/2013), from FCT/MCTES/PIDDAC, Portugal.}}\\
{\smallit University of Lisboa, BioISIÐ Biosystems \& Integrative Sciences Institute}\\
{\bf Richard J.~Nowakowski\footnote{Partially supported by NSERC}}\\
{\smallit Dalhousie University, Canada}\\
{\bf Carlos P. Santos\footnote{Corresponding author: Center for Functional Analysis,
Linear Structures and Applications, Av. Rovisco Pais, 1049-001, Lisboa, Portugal; cmfsantos@fc.ul.pt}}\\
{\smallit Center for Functional Analysis, Linear Structures and Applications, Portugal}\\
\end{center}

\begin{abstract}
The class of Guaranteed Scoring Games (GS) are two-player combinatorial games 
with the property that Normal-play games (Conway et. al.) are ordered embedded into GS. 
They include, as subclasses,
 the scoring games considered by Milnor (1953), Ettinger (1996) and Johnson (2014). 
 We present the structure of GS and the techniques needed to analyze a sum of guaranteed games.
Firstly, GS  form a partially ordered monoid, via defined Right- and Left-stops over the reals, 
and with disjunctive sum as the operation. In fact, the structure is a quotient monoid 
with partially ordered congruence classes. We show that there are four reductions that when applied, 
 in any order, give a unique representative for each congruence class. The monoid is not a group, 
 but in this paper we prove that if a game has an inverse it is obtained by `switching the players'. 
 The order relation between two games is defined by comparing their stops in \textit{any} disjunctive sum. Here, we demonstrate how to compare the games via a finite algorithm instead, extending ideas of Ettinger, and also Siegel (2013).
 \end{abstract}

\section{Introduction} 
Combinatorial Game Theory (CGT) studies two-player games, (the players are called Left and Right)
 with perfect information
and no chance device. A common, almost defining feature, is that these games often 
decompose into sub-components and a player is only allowed to move in one of these at each stage of play. 
This situation is called a \textit{disjunctive sum} of games. 
It is also commonplace to allow addition of games with similar and well defined properties, games in such a family do not necessarily need to have the same rule sets.

The convention we wish to study, has the winner as the player with the best score. This convention includes rule sets such as \textsc{dots-\&-boxes}, \textsc{go} and \textsc{mancala}. A general, useful, theory has 
been elusive and, to our current knowledge, only four approaches appear in the literature.
Milnor \cite{Milno1953}, see also Hanner \cite{Hanne1959},
 considers dicot games (both players have a move from any non-terminal position) 
 with nonnegative incentive. In games with a nonnegative incentive, a move never worsens the player's 
 score; that is, zugzwang games, where neither player wishes to move, do not appear. 
Ettinger \cite{Ettin1996,Ettin2000} considers all dicot games. Stewart \cite{Stewa2011} defines a comprehensive class but
it has few useful algebraic properties. Johnson \cite{Johns2014} considers another subclass of dicot games, 
for which, for any position, the lengths of every branch of the game tree has the same parity. 

We study the class of \textit{Guaranteed Scoring Games}, $\GS$ which were 
introduced in \cite{LarssNS}. This class has a partial order relation, $\su$, 
which together with the disjunctive sum operation induces a congruence relation $(\sim, +)$. 
The resulting quotient monoid inherits partially ordered congruence classes, and it is the purpose of this paper to continue the 
study of these classes. 
In \cite{LarssNS}, it was shown that Normal-play games (see Remark~\ref{rem1}) can be ordered embedded in a natural way 
and that a positive incentive for games without Right or Left options 
is an obstacle to the order embedding. It was also demonstrated how to compare games with numbers
using waiting moves (images of Normal-play integers) and pass-allowed stops. 
Intuitively, this class of games has the property that the 
players want the component games to continue; every game in which at least one player cannot 
move has non-positive incentive.

Here we show that $\GS$ has the properties:

\begin{enumerate}
\item  There is a constructive way to give the order relation between games $G$ and $H$. 
It only requires $G$, $H$ and a special type of simplistic games that we call `waiting moves', games with the 
sole purpose of giving one of the player an extra number of moves, but with no change in score. 
\item There are four reduction theorems, and we find a unique representative game for each congruence class.
Of these, `Eliminating Dominated Options' and 
  `Bypassing Reversible Moves'
  with a non-empty set of options are analogous to those found in the theory of Normal-play
 games. Bypassing a reversible move by just replacing it with an empty-set of options leads to a non-equivalent game. 
 In this case, the appropriate reduction requires consideration of the pass-allowed stops. 
 This has no corresponding 
  concept in Normal-play theory. 
\item 
In $\GS$ the Conjugate Property holds: if $G+H$ is equivalent 
 to 0 then $H$ is the game obtained by interchanging the roles of Left and Right. In Normal-play, this is 
 called the negative of $G$; however in $\GS$ `negatives' do not always exist.
\item We solve each of these problem via a finite algorithm, which is also implemented in a Scoring Games Calculator.
\end{enumerate}

The organization of the paper is as follows: 
Section~\ref{sec:background} introduces the main concepts for Combinatorial Games. In
Section~\ref{sec:guaranteed}, 
 the class of Guaranteed Scoring Games,  together with the order relations and congruence classes, is presented.
 Section~\ref{sec:normal} contains 
 results concerning the order embedding of Normal-play games in $\GS$. Section~\ref{sec:scores}
presents results on pass-allowed stops and waiting moves.   Section~\ref{sec:3reductions} proves
  four reductions that simplify games. Section~\ref{sec:uniqueness} proves that applying 
 these reductions leads to a unique game. The proofs require extending
 Siegel's `linked' concept for mis\`ere games to scoring games which is in Section \ref{sec:gamecomparison}.
 Section~\ref{sec:conjugates} shows that the Conjugate Property holds in $\GS$. In Section~\ref{sec:calc} we give a brief intro to the Scoring Games Calculator.

\begin{remark}\label{rem1} 
Other famous winning conditions in CGT are considering who moves 
last. \textit{Normal-play} games, the first player who cannot move loses, 
find their origins with the analysis of \textsc{nim} \cite{bouto1902}; see also \cite{Grund1939,Sprag}.
Conway developed the first encompassing theory; see
\cite{ BerleCG2001--2004,Conwa2001}. A comprehensive \textit{Mis\`ere} theory,
the first player who cannot move wins,
has not yet been developed but large strides have been made for impartial games,
see \cite{PlambS2008}. A related winning convention arises in the Maker-Breaker (or Maker-Maker) games usually 
played on a graph---one player wishes to create a structure and the opponent wants to stop this (or both want to create a structure)
such as \textsc{hex} or generalized \textsc{tic-tac-toe}. See Beck \cite{Beck2006} for more details. 
\end{remark}

\section{Background}\label{sec:background}

For any combinatorial game $G$ (regardless of the winning condition) there are two players
who, by convention, are called \textit{Left} (female) and \textit{Right} (male)\footnote{Remember, \textit{L}ouise and \textit{R}ichard Guy who have contributed much to combinatorial games.}.
From $G$, a position that some player can move to (in a single move)
 is an \textit{option} of $G$.
The \textit{left} options are those to which Left can move and the corresponding set
is denoted by $\GL$. An element of $\GL$ is often denoted by $G^L$. Similarly, there is a
set of \textit{right} options denoted by $\GR$, with a typical game $G^R$. There is no requirement that $\GL$ and
$\GR$ be disjoint. A game can be recursively defined in terms of its options. 
We will use the representation $G=\langle \GL\mid \GR\rangle$ (so as to distinguish them 
from Normal-play games where the convention is $\{\GL\mid\GR\}$).
The \textit{followers} of $G$ are defined recursively: $G$ and all its options are followers of $G$ and each follower of a follower of $G$ is a follower of $G$.
The set of \textit{proper} followers of $G$ are the followers except for $G$ itself. 
The \textit{game tree} of a position $G$ would then consist of all the followers of $G$
 drawn recursively: i.e. the options of a follower $H$ of $G$ are the children of $H$ in the tree.

Positions fall into two kinds:  \textit{atomic} positions in which at most one player can move, 
and \emph{non-atomic} positions in which both players can move. A position with no Left options
is called \textit{left-atomic}, and in case of no Right options it is \textit{right-atomic}. 
A game  with no options at all is called \textit{purely-atomic}, that is, such games are both left-atomic and right-atomic.\\

\subsection{Introduction to Guaranteed Scoring Games}\label{sec:guaranteed}

 In scoring combinatorial games, the score of a game is determined at the end of the game, that is when the player to move has no option.
 
\begin{definition}[Game termination]\label{def:ending}
Let $G$ be a left-atomic game. We write $\GL=\emptyset^{\ell}$, $\ell\in \R$
to indicate that, if Left to move, the game is over and the \emph{score} is the real number $\ell$.
Similarly, if $G$ is right-atomic then $\GR=\emptyset^r$, and if
 it is Right's move then there are no Right options and
the \emph{score} is $r\in \R$. Left wins if the score is positive, 
Right wins if the score is negative, and 
it is a tie if the score is zero.
\end{definition}

Since the game $\langle \emptyset^s\!\mid \!\emptyset^{s} \rangle$ results in a score of $s$ regardless of whose turn it is, we call this game (the number) $s$.
We refer to the adorned empty set, $\emptyset^s$, $s\in \R$, as an \textit{atom} or, if needed for 
specificity, the $s$\textit{-atom}. By  an \emph{atom in a game} $G$, we mean an atom of some 
atomic
follower of $G$. By an atom in a set of games we mean an atom in one of the games in that set. In the general scoring universe,  denoted by $\s$ (see also \cite{LarssNS,Stewa2011}), there is no restriction to the form of the atomic games.

\begin{definition}\label{def:guaranteed}
A game $H\in \s$ is \textit{guaranteed} if, for every atomic follower $G$, 
\begin{itemize}
\item[1.] if $G=\langle \atom{\ell} \mid \emptyset^r \rangle $ then $\ell \le r$;
\item[2.] if $G=\langle \emptyset^\ell \mid \GR \rangle $ then $\ell \le s$, for every $s$-atom in $G$;
\item[3.] if $G=\langle \GL \mid \emptyset^r \rangle $ then $s \le r$, for every $s$-atom in $G$.
\end{itemize}
\end{definition}

Note that this definition is formally equivalent to: a game $H\in \s$ is \textit{guaranteed} if, for every atomic follower $G$, $\ell \le r$, for all $\ell$-atoms in $\GL$ and all $r$-atoms in $\GR$. Moreover, observe that if $H$ is guaranteed then every follower of $H$ is also guaranteed. For example, 
$\langle\langle\emptyset^1\mid \emptyset^1 \rangle\mid \langle\emptyset^0\mid \emptyset^0 \rangle \rangle = \langle1\mid 0 \rangle$ is guaranteed but 
$H=\langle \emptyset^1 \mid\mid 4, \langle \emptyset^3 | 3, \langle \emptyset^5\mid
4\rangle\rangle$ is not since:

(a) $H$ is left-atomic and 1 is less than the other  scores;

(b) both $3=\langle\emptyset^3\mid\emptyset^3\rangle$ and $4= \langle\emptyset^4\mid\emptyset^4\rangle$ are atomic and each satisfies (i) in Definition~\ref{def:guaranteed};

(c) $\langle \emptyset^3 \mid 3, \langle \emptyset^5\mid4\rangle\rangle$ is left-atomic and
3 is less than or equal to 3,  5 and 4;

(d) $\langle \emptyset^5\mid 4\rangle  = \langle \emptyset^5\mid \langle\emptyset^4
\mid\emptyset^4\rangle\rangle$ does not satisfy (ii) and thus $H$ is not guaranteed.\\

The class of Guaranteed Scoring Games, $\GS$, can be defined directly as the class that contains all
guaranteed games. We give an equivalent recursive definition.
\begin{definition}\label{def:recgua}   
Let $\GS_0$ be the set of birthday 0 guaranteed games. These are of the form 
$\{\langle \, \emptyset^{\ell}\mid \emptyset^r \, \rangle : \ell,r\in \mathbb{R}, \ell\leq r \}$. Suppose that $\cal G$ and $\cal H$ are sets of 
guaranteed games of birthday less than $i$. 
The set of non-atomic games of the form $\langle \,  {\cal G}\mid {\cal H} \, \rangle$ together with atomic games of the forms 
$\pura{\ell}{r}$, $\langle \emptyset^\ell\mid {\cal H} \rangle$ and 
$\langle {\cal G}\mid \emptyset^r\rangle$ are the games in $\GS_{i}$. For $i>0$, if $G\in \GS_{i}\setminus \GS_{i-1}$ 
then $G$ is said to have \textit{birthday} $i$ and we write $b(G)=i$. 
\end{definition}
It follows that $\GS=\cup_{i\ge 0}\GS_i$, with notation as in Definition~\ref{def:recgua}. 
The birthday of a game corresponds to the depth of its game tree. This stratification into birthdays  is 
very useful for proofs by induction.

A player may be faced with several component games/positions, 
and if there is at least one in which he can move then he has an option and the game is 
not over yet. 
A move in a disjunctive sum of positions is a move in exactly one of the component positions, 
and the other ones remain unchanged. It is then the other playerÕs turn to move. 
We formalize this in the next definition by listing all the possible cases. The distinction between the two uses of $+$, the disjunctive sum of games and the addition of real numbers, will always be clear from the context.
 If $\mathcal{G}=\{G_1, \ldots,G_m\}$ is a set of games and $H$ is a single game then 
 $\mathcal{G}+H =\{G_1+H, \ldots, G_m+H\}$ if $\mathcal{G}$ is non-empty; 
 otherwise $\mathcal{G}+H$ is not defined and will be removed from any list of games.

  An intuitively obvious fact that is worthwhile highlighting at this point:
if Left has no move
in $G+H$ then Left has no move in neither of $G$ and  $H$ (and reverse), that is:
$$\textit{ $G+H$ is left-atomic if and only if both $G$ and $H$ are left-atomic,}$$
and analogously for right-atomic games.

\begin{definition}\label{def:disjunctive}[Disjunctive Sum]
The disjunctive sum of two guaranteed scoring games $G$ and $H$ is given by:
\[G+ H=  \begin{cases} 
\langle \, \emptyset^{\ell_1+\ell_2}\mid\emptyset^{r_1+r_2} \, \rangle, \quad\textrm{ if $G=\langle \, \emptyset^{\ell_1}\mid\emptyset^{r_1} \, \rangle$ and
$H=\langle \, \emptyset^{\ell_2}\mid\emptyset^{r_2} \, \rangle$;}\\
\langle \, \emptyset^{\ell_1+\ell_2}\mid\GR + H,G+\HR \, \rangle, \textrm{ if
$G=\langle \, \emptyset^{\ell_1}\mid\GR  \, \rangle$  and
$H=\langle \, \emptyset^{\ell_2}\mid\HR \, \rangle$},\\
{}\qquad \textrm{ and at least one of $\GR $ and $\HR$
is non-empty;}\\
\langle \, \GL + H,G+\HL\mid \emptyset^{r_1+r_2} \, \rangle, \textrm{ if
$G=\langle \, \GL \mid\emptyset^{r_1} \, \rangle$  and
$H=\langle \, \GL \mid\emptyset^{r_2} \, \rangle$},\\
{}\qquad \textrm{ and at least one of $\GL $ and $\HL$
is non-empty;}\\
\langle \, \GL + H,G+ \HL\mid\GR + H,G+\HR \, \rangle,
\textrm{ otherwise.}
\end{cases}
\]
\end{definition}
Note that in the last equality, if there are no left options in $G$, then $\GL + H$ gets removed, unless both $\GL$ and $\HL$ are atoms, in which 
case some earlier item applies.

\begin{theorem}\label{thm:monoidstructure}
$(\GB,+)$ is a commutative monoid.
\end{theorem}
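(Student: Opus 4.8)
The plan is to establish the three requirements for a commutative monoid — that $+$ maps $\GB\times\GB$ into $\GB$ (closure), and that it is commutative and associative — and to exhibit a two-sided identity. Every assertion is proved by induction: on $b(G)+b(H)$ for the binary statements, and on $b(G)+b(H)+b(K)$ for associativity, taking the purely-atomic games of Definition~\ref{def:recgua} as the base case. There each claim reduces to the corresponding property of addition on $\R$ (commutativity, associativity, and $x+0=x$), since the first case of Definition~\ref{def:disjunctive} simply adds the atom-scores.

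First I would check closure, i.e.\ that $G+H$ is guaranteed whenever $G,H\in\GB$. By the reformulation of Definition~\ref{def:guaranteed} noted immediately after it, it suffices to prove that every atomic follower $F$ of $G+H$ satisfies $\ell\le r$ for all $\ell$-atoms in $F^{\mathcal L}$ and all $r$-atoms in $F^{\mathcal R}$. The highlighted equivalence stated just before Definition~\ref{def:disjunctive} — that $G+H$ is left-atomic if and only if both summands are, and dually for right-atomic — forces every atomic follower of $G+H$ to be a disjunctive sum $G'+H'$ of atomic followers $G'$ of $G$ and $H'$ of $H$ of the same one-sided type, and forces every atom of such a follower to split as $s=s_1+s_2$ with $s_i$ an atom of the corresponding summand. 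The inequalities of Definition~\ref{def:guaranteed} for $G'+H'$ then follow by adding the respective inequalities for $G'$ and for $H'$ and using monotonicity of addition on $\R$.

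Commutativity is immediate: each of the four cases of Definition~\ref{def:disjunctive} is invariant under interchanging $G$ and $H$, since the atom-scores are added and the two option-lists are merely swapped; a one-line induction records $G+H=H+G$. For the identity I would take the number $0=\pura{0}{0}$ and show $G+0=G$. When $G$ is purely atomic this is the first case of Definition~\ref{def:disjunctive}, giving $\pura{\ell+0}{r+0}=\pura{\ell}{r}$. Otherwise, because $0$ has empty option sets, each term $G+0^{\mathcal L}$ and $G+0^{\mathcal R}$ is undefined and is removed, so the applicable case collapses to $\langle\,\GL+0\mid\GR+0\,\rangle$ (with one side an unchanged atom when $G$ is one-sided), and this equals $G$ upon applying the induction hypothesis to each option.

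The substantial part is associativity, $(G+H)+K=G+(H+K)$. In the generic situation where all intermediate sums are non-atomic, I would expand one level: the left options of $(G+H)+K$ form the set $((G+H)^{\mathcal L}+K)\cup((G+H)+K^{\mathcal L})$, and substituting $(G+H)^{\mathcal L}=(\GL+H)\cup(G+\HL)$ and applying the induction hypothesis rewrites this as $(\GL+(H+K))\cup(G+(\HL+K))\cup(G+(H+K^{\mathcal L}))$, which is exactly the set of left options of $G+(H+K)$; the right options agree by the symmetric computation, and at the base the atom-scores agree because addition on $\R$ is associative. The genuine obstacle I expect is the bookkeeping when one or more of $G,H,K$ is atomic: one must verify that the rules for combining atom-scores and for deleting undefined summands (the removal of $\mathcal G+H$ when $\mathcal G=\emptyset$, together with the clause of Definition~\ref{def:disjunctive} that reroutes a one-sided sum to an earlier, atomic case) yield the same game under either bracketing. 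This is a finite case analysis organized by the left- and right-atomic status of the three summands, and it is precisely here that the explicit cases of Definition~\ref{def:disjunctive}, rather than the generic option formula, must be invoked.
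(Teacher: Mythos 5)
Your proposal is correct and takes essentially the same route as the paper's proof: induction on birthdays, case analysis on Definition~\ref{def:disjunctive} with each case reducing to the corresponding property of addition on $\R$, and closure via the observation that an atomic follower of $G+H$ is a sum of same-sided atomic followers whose atoms add, so the guaranteed inequalities add. The differences are purely presentational (you spell out the generic non-atomic case of associativity and defer the atomic bookkeeping, while the paper spells out two atomic cases and defers the rest).
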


\begin{proof} In all cases, the proof is by induction on the sum of the birthdays of the positions.
\begin{enumerate}
  \item $\GB$ is closed, that is, $G,H\in\GB\Rightarrow G+H\in\GB$.

Suppose that $G+H$ is left-atomic. Then both $G=\langle \emptyset^{g} \mid \GR\rangle$ and $H=\langle \emptyset^{h} \mid \HR\rangle$ are left-atomic. 
Since both games are guaranteed, then each $s$-atom in $G$ satisfies $g\le s$ and each $t$-atom in $H$ satisfies $h\le t$. Therefore $g+h\le \min \{s+t\}$, 
and so $G+H=\langle \emptyset^{g+h} \mid (G+H)^{\mathcal{R}}\rangle$ is also guaranteed. this case includes the possibility that $(G+H)^{\mathcal{R}}$ 
is the $(s+t)$-atom. Finally, suppose that both $G^{\mathcal{L}}$ and $G^\mathcal{R}$ are 
non-empty sets of games of $\GS$. Both players have moves in $G+H$ that, by 
induction, are games of $\GS$. So, $G+H\in \GS$.

  \item Disjunctive sum is commutative.

 If $G=\langle\emptyset^{\ell_1}\mid\emptyset^{r_1}\rangle$ and
$H=\langle\emptyset^{\ell_2}\mid\emptyset^{r_2}\rangle$ then $G+H=\langle \emptyset^{\ell_1+\ell_2}\mid\emptyset^{r_1+r_2}\rangle=\langle \emptyset^{\ell_2+\ell_1}\mid\emptyset^{r_2+r_1}\rangle=H+G$.

If $G=\langle\emptyset^{\ell_1}\mid\GR\rangle$ and $H=\langle\emptyset^{\ell_2}\mid\HR\rangle$ then
\[G+H=\langle\emptyset^{\ell_1+\ell_2}\mid\GR +H,G+\HR\rangle\underbrace{=}_{induction}\langle\emptyset^{\ell_2+\ell_1}\mid H+\GR,\HR+G\rangle=H+G.\]

The other cases are analogous using induction and the fact that the addition of real numbers is commutative.
  \item Disjunctive sum is associative.

 If $G=\langle\emptyset^{\ell_1}\mid\emptyset^{r_1}\rangle$,
$H=\langle\emptyset^{\ell_2}\mid\emptyset^{r_2}\rangle$ and $J=\langle\emptyset^{\ell_3}\mid\emptyset^{r_3}\rangle$ then $G+(H+J)=(G+H)+J$ is just a consequence of that the addition of real numbers is associative.

If $G=\langle\emptyset^{\ell_1}\mid\emptyset^{r_1}\rangle$,
$H=\langle\emptyset^{\ell_2}\mid\emptyset^{r_2}\rangle$ and $J=\langle\emptyset^{\ell_3}\mid J^{\mathcal{R}}\rangle$ then
\begin{eqnarray*}
G+(H+J)&=&\langle\emptyset^{\ell_1}\mid\emptyset^{r_1}\rangle+\langle\emptyset^{\ell_2+\ell_3}\mid H+J^{\mathcal{R}}\rangle\\
&=&\langle\emptyset^{\ell_1+(\ell_2+\ell_3)}\mid G+(H+J^{\mathcal{R}})\rangle\\
&\underbrace{=}_{induction}&\langle\emptyset^{(\ell_1+\ell_2)+\ell_3}\mid (G+H)+J^{\mathcal{R}}\rangle=(G+H)+J.
\end{eqnarray*}

The other cases are  analogous using induction and the fact that the addition of real numbers is associative.

    \item It follows directly from the definition of disjunctive sum that $G+0=0+G=G$
    so the identity of $(\GB,+)$ is $0$.
\end{enumerate}
\end{proof}
When analyzing games, the following observation, which follows 
from the definition of the disjunctive sum, is useful for human players.
\begin{observation}[Number Translation] 
Let $G\in\GS$ and $x\in\mathbb{R}$ then 
\[G+x = \begin{cases} \langle \emptyset^{\ell+x}\mid\emptyset^{r+x}  \rangle
\mbox{ if $G = \langle \emptyset^{\ell}\mid\emptyset^{r}\rangle$,}\\
\langle \emptyset^{\ell+x}\mid\GR+x  \rangle
\mbox{ if $G = \langle \emptyset^{\ell}\mid\GR \rangle$,}\\
\langle \GL+x\mid\emptyset^{r+x}  \rangle
\mbox{ if $G = \langle \GL\mid\emptyset^{r}  \rangle$,}\\
\langle \GL+x\mid\GR+x  \rangle
\mbox{ if $G = \langle \GL\mid\GR  \rangle$.}
\end{cases}	
\]
\end{observation}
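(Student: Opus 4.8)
The plan is to recognize the real number $x$ as the purely-atomic game $\langle \emptyset^x\mid \emptyset^x\rangle$ introduced just after Definition~\ref{def:ending}, and then to unwind Definition~\ref{def:disjunctive} with $H=x$, proceeding by induction on the birthday $b(G)$ (Definition~\ref{def:recgua}). The base case is $b(G)=0$: here $G=\langle \emptyset^\ell\mid \emptyset^r\rangle$ is purely-atomic, so $G$ and $x$ are both purely-atomic, and the first clause of the disjunctive sum gives $G+x=\langle \emptyset^{\ell+x}\mid \emptyset^{r+x}\rangle$, which is the first line of the claim.

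For the inductive step I would split according to which of the four clauses of Definition~\ref{def:disjunctive} is triggered by the pair $(G,x)$. The decisive observation is that $x$ has no options: $x^{\mathcal{L}}=\emptyset^x$ and $x^{\mathcal{R}}=\emptyset^x$ are empty sets of options. Consequently, whenever a clause produces a term of the form $G+x^{\mathcal{L}}$ or $G+x^{\mathcal{R}}$, that term is undefined and is removed from the option list, by the removal convention stated just before Definition~\ref{def:disjunctive}. This is precisely the mechanism that collapses the general recursion into a clean number-translation.

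Concretely, if $G=\langle \emptyset^\ell\mid \GR\rangle$ with $\GR$ non-empty, then $G$ and $x$ are both left-atomic and $\GR$ is non-empty, so the second clause applies and yields $G+x=\langle \emptyset^{\ell+x}\mid \GR+x,\ G+x^{\mathcal{R}}\rangle$; since $x^{\mathcal{R}}$ is empty the last term drops, leaving $\langle \emptyset^{\ell+x}\mid \GR+x\rangle$, and the entries of $\GR+x$ are of the asserted form by the induction hypothesis applied to each option (which has smaller birthday). The right-atomic case is symmetric via the third clause. Finally, if $G=\langle \GL\mid \GR\rangle$ is non-atomic, then none of the first three clauses applies, since each of them requires $G$ to be atomic on at least one side, so the fourth clause governs and gives $\langle \GL+x,\ G+x^{\mathcal{L}}\mid \GR+x,\ G+x^{\mathcal{R}}\rangle$; both terms involving $x^{\mathcal{L}}$ and $x^{\mathcal{R}}$ vanish, producing $\langle \GL+x\mid \GR+x\rangle$, and induction finishes the remaining entries.

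I expect no deep difficulty here; the only real care needed is the bookkeeping of which clause fires together with the systematic use of the removal convention for $G+x^{\mathcal{L}}$ and $G+x^{\mathcal{R}}$. In particular, in the left-atomic case one must verify the side condition of the second clause, namely that at least one of $\GR$ and $\HR$ is non-empty; here it holds because $\GR$ itself is non-empty, so that one does not mistakenly fall through to the first clause, which would require both games to be purely-atomic.
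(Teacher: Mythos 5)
Your proof is correct and takes essentially the same approach as the paper, which states this observation as following directly from Definition~\ref{def:disjunctive}; your case analysis, keyed to which clause fires and to the removal convention for the undefined terms $G+x^{\mathcal{L}}$ and $G+x^{\mathcal{R}}$, is exactly that unwinding. The induction on birthday is harmless but unnecessary, since the observation only describes the top-level structure of $G+x$ and makes no claim about the internal form of the options $G^R+x$.
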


Next, we give the fundamental definitions for comparing games. 
\begin{definition} \label{def:stops}
For a game $G\in \GS$:
\[\LS(G) = \begin{cases}
\ell &\mbox{if } G=\langle \emptyset^\ell\mid  \GR\rangle  \\
\max\{\RS(G^L) : G^L\in \GL\} & \mbox{otherwise;} \end{cases}
\]
and
\[\RS(G) = \begin{cases}
r&\mbox{if } G=\langle \GL\mid  \emptyset^r  \rangle  \\
\min\{\LS(G^R) : G^R\in \GR\} & \mbox{otherwise.} \end{cases}
\]
\end{definition}

We call $\LS(G)$ the \emph{Left-stop} of $G$ and $\RS(G)$ the \emph{Right-stop} of $G$.

    \begin{definition}\label{def:equivalence}(Inequalities for games)\\
Let $G, H\in \GB$. Then
    $G\succcurlyeq  H$ if for all $ X\in \GB$ we have
    $\LS(G+X)\ge  \LS(H+X)\, \mbox{ and }\,
    \RS(G+X)\ge  \RS(H+X)$.
The games $G$ and $H$ are \textit{equivalent}, denoted by $G\sim H$,
if $G\succcurlyeq  H$ and $H\succcurlyeq  G$.
    \end{definition}

\begin{theorem}\label{thm:orderstructure}
The relation $\succcurlyeq$ is a partial order and $\sim$ is an equivalence relation.
\end{theorem}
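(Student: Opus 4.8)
The plan is to reduce all three assertions to the corresponding elementary properties of the total order $\ge$ on $\R$, transported through the universal quantifier over the test games $X$. First I would record that the definition of $G\succcurlyeq H$ is well-posed: by the closure part of Theorem~\ref{thm:monoidstructure}, $G+X$ and $H+X$ lie in $\GB$ for every $X\in\GB$, and by Definition~\ref{def:stops} each of $\LS$ and $\RS$ assigns a well-defined real number to every guaranteed game (the recursion terminates because birthdays are finite, and each $\max$ or $\min$ is taken over a finite nonempty option set). Hence the conditions $\LS(G+X)\ge\LS(H+X)$ and $\RS(G+X)\ge\RS(H+X)$ are genuine comparisons of real numbers, to which the usual order laws apply.

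For reflexivity of $\succcurlyeq$, for any $G$ and any $X$ the two required inequalities read $\LS(G+X)\ge\LS(G+X)$ and $\RS(G+X)\ge\RS(G+X)$, which hold trivially, so $G\succcurlyeq G$. For transitivity, I would assume $G\succcurlyeq H$ and $H\succcurlyeq J$, fix an arbitrary $X\in\GB$, and chain $\LS(G+X)\ge\LS(H+X)\ge\LS(J+X)$ together with the analogous chain for $\RS$; transitivity of $\ge$ on $\R$ yields $\LS(G+X)\ge\LS(J+X)$ and $\RS(G+X)\ge\RS(J+X)$, and since $X$ was arbitrary, $G\succcurlyeq J$.

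The statements about $\sim$ then follow formally from these. Reflexivity of $\sim$ is immediate from $G\succcurlyeq G$; symmetry is built into Definition~\ref{def:equivalence}, since the condition ``$G\succcurlyeq H$ and $H\succcurlyeq G$'' is symmetric in $G$ and $H$; and transitivity follows by applying transitivity of $\succcurlyeq$ in each direction, as $G\sim H$ and $H\sim J$ give $G\succcurlyeq H\succcurlyeq J$ and $J\succcurlyeq H\succcurlyeq G$, whence $G\sim J$.

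The one point that needs care is antisymmetry. Taken literally on individual games, $\succcurlyeq$ is only a preorder: distinct games $G\neq H$ may satisfy both $G\succcurlyeq H$ and $H\succcurlyeq G$. What genuinely holds is that ``$G\succcurlyeq H$ and $H\succcurlyeq G$'' is, by Definition~\ref{def:equivalence}, precisely the assertion $G\sim H$; thus $\succcurlyeq$ becomes antisymmetric after passing to the equivalence classes of $\sim$, and this is the sense in which it is a partial order, consistent with the quotient monoid of partially ordered congruence classes announced in the introduction. I expect this interpretive step to be the only real obstacle; everything else is a mechanical consequence of the order axioms for $\R$ combined with the closure guaranteed by Theorem~\ref{thm:monoidstructure}.
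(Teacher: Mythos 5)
Your proposal is correct and follows essentially the same route as the paper, whose entire proof is the remark that both assertions ``follow directly from their definitions and the fact that the reals are totally ordered''; you have simply unpacked that one-liner into explicit reflexivity/transitivity chains. Your observation that $\succcurlyeq$ is literally only a preorder on games and becomes antisymmetric (hence a genuine partial order) only on $\sim$-classes is the right reading of the paper's convention, which silently identifies equivalent games.
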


\begin{proof}
\noindent
Both assertions follow directly from their definitions and the fact that  the reals are totally ordered.
\end{proof}

Theorem~\ref{thm:orderstructure} shows that the monoid $(\GS,+)$ can be regarded as the 
algebraic structure $(\GS,+,\su)$. The next three results show that  $(\GS,+)$ modulo $\sim$ is a quotient monoid, and that in fact $(\sim, +)$ is a congruence relation; the additive structure on the equivalence classes $(\GS,+)\!$ modulo $\!\sim$ is inherited from $(\GS,+)$. (A natural function from the congruence classes to the outcomes can be obtained via the unique representatives which define the canonical forms  as discussed in Section~\ref{sec:uniqueness}.)

\begin{lemma}\label{lem:oneadded}
 Let $G, H\in \GS$. If $G\succcurlyeq H$ then
$G+J\succcurlyeq H+J$ for any $J\in \GS$.
\end{lemma}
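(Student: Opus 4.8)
The plan is to unwind the definition of $\succcurlyeq$ and exploit the monoid structure already established in Theorem~\ref{thm:monoidstructure}. By Definition~\ref{def:equivalence}, proving $G+J\succcurlyeq H+J$ amounts to showing that for every test game $X\in\GS$ we have
\[\LS\bigl((G+J)+X\bigr)\ge \LS\bigl((H+J)+X\bigr)\quad\text{and}\quad \RS\bigl((G+J)+X\bigr)\ge \RS\bigl((H+J)+X\bigr).\]
So I would fix an arbitrary $X\in\GS$ and work toward these two stop inequalities.

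The central idea is to absorb the fixed game $J$ into the universally quantified test game. First I would invoke associativity of disjunctive sum (Theorem~\ref{thm:monoidstructure}) to rewrite $(G+J)+X = G+(J+X)$ and likewise $(H+J)+X = H+(J+X)$. Then I would set $Y := J+X$ and note, using closure of $\GS$ under $+$ (again Theorem~\ref{thm:monoidstructure}), that $Y\in\GS$. This is the only substantive point: the hypothesis $G\succcurlyeq H$ quantifies over \emph{all} games of $\GS$, so it is essential that $J+X$ is itself a legitimate element of $\GS$ rather than some external object, which is exactly what closure guarantees.

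With $Y\in\GS$ in hand, I would simply apply the hypothesis $G\succcurlyeq H$ to the particular test game $Y$. By Definition~\ref{def:equivalence} this yields $\LS(G+Y)\ge \LS(H+Y)$ and $\RS(G+Y)\ge \RS(H+Y)$, and translating back through the identifications $G+Y=(G+J)+X$ and $H+Y=(H+J)+X$ gives precisely the two required inequalities. Since $X$ was arbitrary, this establishes $G+J\succcurlyeq H+J$.

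I do not expect any genuine obstacle here; the statement is essentially the assertion that the order relation is translation-compatible, and it falls out immediately once the test game is reparametrized. The only care needed is bookkeeping: making sure associativity is applied correctly and that closure is cited so that $J+X$ qualifies as an admissible test game. (Commutativity is not strictly needed, but one may use it freely if it streamlines the rewriting.) Consequently, the proof is short, proceeding directly from the definitions rather than by induction on birthdays.
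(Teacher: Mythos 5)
Your proof is correct and matches the paper's argument essentially verbatim: both absorb $J$ into the universally quantified test game via associativity and then apply the hypothesis $G\succcurlyeq H$ to the test game $J+X$. Your explicit citation of closure (so that $J+X\in\GS$ is a legitimate test game) is a small point of extra care that the paper leaves implicit, but the route is the same.
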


\begin{proof}
Consider any game $J\in \GS$. Since $G\succcurlyeq H$, it follows that, $\LS(G+(J+X))\ge \LS(H+(J+X))$, for any $X\in \GS$.
Since disjunctive sum is associative this inequality is the same as
$\LS((G+J)+X))\ge \LS((H+J)+X)$. The same argument gives
$\RS((G+J)+X))\ge \RS((H+J)+X)$ and thus, since $X$ is arbitrary, this gives that $G+J\su H+J$.
\end{proof}

\begin{corollary}\label{cor:disjunctiveorder}
Let $G,H,J,W\in \GS$. If
$G\succcurlyeq H$ and $J\succcurlyeq W$ then $G+J\succcurlyeq H+W$.
\end{corollary}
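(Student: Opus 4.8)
The plan is to reduce this statement to two applications of Lemma~\ref{lem:oneadded}, glued together by the transitivity of $\su$ that Theorem~\ref{thm:orderstructure} guarantees. The key observation is that Lemma~\ref{lem:oneadded} lets us add a fixed game to both sides of an already-established inequality without disturbing it; so I would use it once to absorb $J$ into the first hypothesis and once to absorb $H$ into the second, producing two inequalities that share a common middle term.

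Concretely, I would first take the hypothesis $G\su H$ and apply Lemma~\ref{lem:oneadded} with the fixed game $J$, obtaining $G+J\su H+J$. Next I would take the hypothesis $J\su W$ and apply the same lemma with the fixed game $H$, obtaining $J+H\su W+H$; invoking commutativity of the disjunctive sum (Theorem~\ref{thm:monoidstructure}) this rewrites as $H+J\su H+W$. At this point both derived inequalities are expressed in terms of the single intermediate game $H+J$.

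Finally I would chain the relations $G+J\su H+J$ and $H+J\su H+W$ through their shared term $H+J$ and appeal to the transitivity of the partial order $\su$ (Theorem~\ref{thm:orderstructure}) to conclude $G+J\su H+W$, which is exactly the claim. I do not expect any genuine obstacle in this argument: it is a direct consequence of Lemma~\ref{lem:oneadded} together with the order axioms. The only point requiring a moment's care is the bookkeeping—tracking which fixed game is adjoined in each invocation of the lemma, and the commutativity step needed to align the two inequalities at the common middle term $H+J$ so that transitivity can be applied.
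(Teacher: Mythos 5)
Your proof is correct and is precisely the argument the paper intends by its one-line proof (``Apply Lemma~\ref{lem:oneadded} twice''): two applications of the lemma yield $G+J\su H+J$ and $H+J\su H+W$, and transitivity of $\su$ (Theorem~\ref{thm:orderstructure}) chains them, with commutativity handling the bookkeeping. No gaps; your write-up simply makes explicit what the paper leaves implicit.
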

\begin{proof}
Apply Lemma~\ref{lem:oneadded} twice.
\end{proof}

\begin{corollary}\label{cor:congruence}
Let $G,H,J,W\in \GS$. If
$G\sim H$ and $J\sim W$ then $G+J\sim H+W$.
\end{corollary}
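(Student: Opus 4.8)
The plan is to reduce this equivalence statement to two applications of Corollary~\ref{cor:disjunctiveorder}, exploiting the symmetric shape of the definition of $\sim$. First I would unpack the hypotheses using Definition~\ref{def:equivalence}: the assumption $G\sim H$ is by definition the pair of inequalities $G\su H$ and $H\su G$, and likewise $J\sim W$ splits into $J\su W$ and $W\su J$. This turns a single equivalence statement into four ordinary $\su$-inequalities, which is the form Corollary~\ref{cor:disjunctiveorder} is built to consume.

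With the hypotheses in this form, the two directions needed for $G+J\sim H+W$ fall out symmetrically. For the first direction I would feed $G\su H$ and $J\su W$ into Corollary~\ref{cor:disjunctiveorder} to obtain $G+J\su H+W$. For the reverse direction I would feed the companion inequalities $H\su G$ and $W\su J$ into the same corollary to obtain $H+W\su G+J$. Having both $G+J\su H+W$ and $H+W\su G+J$ is precisely what it means, by Definition~\ref{def:equivalence}, for $G+J$ and $H+W$ to be equivalent, so the argument is then complete.

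I do not expect any genuine obstacle here: all the real content already lives in Corollary~\ref{cor:disjunctiveorder} (which in turn applies Lemma~\ref{lem:oneadded} twice, where associativity of the disjunctive sum does the actual work). The only point requiring a moment's care is orienting each pair of inequalities correctly, so that one invocation of the corollary yields the $\su$-direction and the other yields its converse; since $\su$ is a partial order and $\sim$ the induced symmetric relation by Theorem~\ref{thm:orderstructure}, no subtlety arises. In effect this corollary merely records that the additive structure respects the equivalence relation, i.e.\ that $(\sim,+)$ is a congruence and the monoid operation descends to $(\GS,+)$ modulo $\sim$, which is exactly the conclusion the surrounding discussion is aiming for.
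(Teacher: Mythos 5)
Your proof is correct and matches the paper's own argument exactly: the paper likewise unpacks $\sim$ into the two $\su$-inequalities and applies Corollary~\ref{cor:disjunctiveorder} twice, once in each direction. No issues.
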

\begin{proof}
Since $X\sim Y$ means $X\su Y $ and $Y\su X$, the result follows by applying Corollary~\ref{cor:disjunctiveorder} twice.
\end{proof}

The \textit{conjugate}  of a game $G$, $\Conj{G}$, is defined recursively:
$\Conj{G}=\langle \ConjS{\GR} \! \mid \! \ConjS{\GL}\rangle$, where $\ConjS{\GR}$ means
$\ConjS{G^R}$, for each $G^R\in \GR$ (and similarly for $\GL$),
unless $\GR=\emptyset^r$, in which case $\ConjS{\GR}=\emptyset^{-r}$. It is easy to see that if a game is guaranteed, then its conjugate is also. 
As mentioned early, this is equivalent to interchanging Left and Right. In Normal-play
$G+\Conj{G} \sim 0$, but in $\GS$ this is not necessarily true. For example, if $G=\langle \emptyset^\ell \mid \emptyset^r \rangle$,
  then conjugate is $\Conj{G} = \langle \emptyset^{-r}  \mid \emptyset^{-\ell} \rangle$
  and $G+\Conj{G}\sim 0$ if  and only if $\ell=r$. 

The next two results will be useful in proving the Conjugate Property in Section~\ref{sec:conjugates}.
\begin{lemma}\label{lem:strict}
Let $G,H\in{\GS}$. If $G\succ 0$ and $H\succcurlyeq 0$ then $G+H\succ 0$.
\end{lemma}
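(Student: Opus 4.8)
The plan is to combine the monotonicity of $+$ with respect to $\succcurlyeq$ (Lemma~\ref{lem:oneadded} and Corollary~\ref{cor:disjunctiveorder}) with a ``squeeze'' argument that uses the congruence property (Corollary~\ref{cor:congruence}) to absorb $H$. Because $\GS$ is only a monoid and not a group, I cannot simply add the strict relation $G\succ 0$ to the relation $H\succcurlyeq 0$ and hope that strictness survives; instead I would argue by contradiction, showing that if $G+H$ failed to be strictly positive, then $H$ would have to be equivalent to $0$, and could therefore be cancelled, forcing $G\sim 0$.

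First I would dispose of the non-strict part. Since $G\succ 0$ in particular gives $G\succcurlyeq 0$, and $H\succcurlyeq 0$ by hypothesis, Corollary~\ref{cor:disjunctiveorder} yields $G+H\succcurlyeq 0+0=0$. So it only remains to rule out $G+H\sim 0$, which is exactly the assertion that $G+H\succ 0$ fails.

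Next, suppose toward a contradiction that $G+H\sim 0$. Applying Lemma~\ref{lem:oneadded} to $G\succcurlyeq 0$, adding $H$, gives $G+H\succcurlyeq 0+H=H$. On the other hand, $G+H\sim 0$ means in particular that $0\succcurlyeq G+H$. Using transitivity of the partial order (Theorem~\ref{thm:orderstructure}), I obtain the chain $0\succcurlyeq G+H\succcurlyeq H\succcurlyeq 0$, whence $0\succcurlyeq H$ and $H\succcurlyeq 0$; by Definition~\ref{def:equivalence} this means $H\sim 0$.

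Finally, I would cancel $H$. By Corollary~\ref{cor:congruence} applied to $G\sim G$ and $H\sim 0$, we get $G+H\sim G+0=G$. Combined with the standing assumption $G+H\sim 0$, transitivity gives $G\sim 0$, contradicting $G\succ 0$. Hence $G+H\not\sim 0$, and together with $G+H\succcurlyeq 0$ this establishes $G+H\succ 0$. The computation here is minimal, so the only real obstacle is conceptual: one must resist propagating the strict inequality additively ``by hand,'' and instead recognize that the correct device is the squeeze $0\succcurlyeq G+H\succcurlyeq H\succcurlyeq 0$, which pins $H$ to $0$ and lets the congruence relation remove it.
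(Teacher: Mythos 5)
Your proof is correct, but it takes a genuinely different route from the paper's. The paper argues directly with stops: since $G\succ 0$, there is (without loss of generality) a witness $X$ with $\LS(G+X)>\LS(X)$; then $H\succcurlyeq 0$ gives $\LS(G+H+X)\ge \LS(G+X+0)=\LS(G+X)>\LS(X)$, so the very same $X$ witnesses $G+H\nsim 0$. Your argument never touches the stops: after establishing $G+H\succcurlyeq 0$ exactly as the paper does, you rule out $G+H\sim 0$ by pure order algebra --- the squeeze $0\succcurlyeq G+H\succcurlyeq H\succcurlyeq 0$ pins $H\sim 0$, and then Corollary~\ref{cor:congruence} (substitution of equivalents, which you correctly use in place of cancellation, the latter being unavailable in a monoid) gives $G\sim G+0\sim G+H\sim 0$, contradicting $G\succ 0$. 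What your approach buys is generality and abstraction: it is valid in any partially ordered commutative monoid whose order is compatible with the operation, independently of how $\succcurlyeq$ is defined. What the paper's approach buys is constructiveness: it exhibits an explicit distinguishing position for $G+H$ versus $0$, in the concrete style used throughout the paper. One stylistic economy worth noting: your detour through $H\sim 0$ is longer than necessary, since from $H\succcurlyeq 0$ and Lemma~\ref{lem:oneadded} you get $G+H\succcurlyeq G$ directly, so the assumption $G+H\sim 0$ yields $0\succcurlyeq G+H\succcurlyeq G$ and hence $G\sim 0$ in one step; but this is a simplification of your argument, not a gap in it.
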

\begin{proof}
By Corollary~\ref{cor:disjunctiveorder}, we already know that $G+H\succcurlyeq 0$. So, it is enough to show that $G+H\nsim 0$.
Since $G\succ 0$ then, without loss of generality, we may assume that
 $\LS(G+X)>\LS(X)$ for some $X$. Because $H\succcurlyeq 0$,
 we have $\LS(G+X+H)\ge \LS(G+X+0)=\LS(G+X)>\LS(X)$, and
 therefore $G+H\nsim 0$.
\end{proof}

\begin{lemma}\label{lem:dis}
Let $G,H\in{\GS}$. Let $J\in \GS$ be invertible, then
$G+J\su H+J$ if and only if
 $G\su H$.
\end{lemma}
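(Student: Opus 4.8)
The plan is to prove the two implications separately, the forward one being an immediate consequence of Lemma~\ref{lem:oneadded} and the reverse one being a cancellation argument that is precisely where invertibility of $J$ is used. Throughout, I read ``$J$ is invertible'' in the monoid-theoretic sense: there exists $J'\in\GS$ with $J+J'\sim 0$.

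For the ``if'' direction, suppose $G\su H$. Then Lemma~\ref{lem:oneadded}, applied with the game $J$, gives $G+J\su H+J$ directly, with no hypothesis on $J$ required.

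For the ``only if'' direction, suppose $G+J\su H+J$ and fix an inverse $J'$ of $J$, so that $J+J'\sim 0$. First I would add $J'$ to both sides: by Lemma~\ref{lem:oneadded}, $(G+J)+J'\su (H+J)+J'$. Since $(\GS,+)$ is a commutative monoid (Theorem~\ref{thm:monoidstructure}), I can reassociate and commute to rewrite the two sides as $G+(J+J')$ and $H+(J+J')$, obtaining $G+(J+J')\su H+(J+J')$. Next I would invoke the congruence property: because $J+J'\sim 0$, Corollary~\ref{cor:congruence} yields $G+(J+J')\sim G+0=G$ and $H+(J+J')\sim H+0=H$, using that $0$ is the identity of the monoid. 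Recalling that $X\sim Y$ means both $X\su Y$ and $Y\su X$, and that $\su$ is a partial order, hence transitive (Theorem~\ref{thm:orderstructure}), I would then chain the inequalities as
$$G\,\su\,G+(J+J')\,\su\,H+(J+J')\,\su\,H,$$
so that $G\su H$, completing the reverse implication.

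The argument is short and essentially formal; the only point requiring genuine care — and the ``obstacle'' such as it is — is the correct interplay between $\su$ and $\sim$. Specifically, one must use that congruence (Corollary~\ref{cor:congruence}) permits substituting $J+J'$ by $0$ on \emph{both} sides while preserving the inequality, and that transitivity of $\su$ legitimately absorbs the two equivalences sitting at the ends of the displayed chain. Everything else reduces to the commutative-monoid laws of Theorem~\ref{thm:monoidstructure}.
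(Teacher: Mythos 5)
Your proof is correct and takes essentially the same approach as the paper: one direction is Lemma~\ref{lem:oneadded}, and the other is the same cancellation-by-$J'$ argument, which the paper carries out at the level of stops by testing $G+J$ and $H+J$ against $X'=X+J'$ for each distinguishing game $X$, while you package the identical computation through Corollary~\ref{cor:congruence} and transitivity of $\su$. No gaps.
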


\begin{proof}
 The direction $\Rightarrow$ follows immediately from Lemma~\ref{lem:oneadded}.

Consider $J,X\in \GS$ where $J$ is invertible.
Consider $X'=X+J'$ where $J'$ is the inverse of $J$. Because $G+J\su H+J$,
we have $\LS(G+X)=\LS(G+J+X')\ge \LS(H+J+X')=\LS(H+X)$ and
$\RS(G+X)=\RS(G+J+X')\ge \RS(H+J+X')=\RS(H+X)$.
\end{proof}

\subsection{Relation between Normal-play and Guaranteed Games}\label{sec:normal}


One of the main results in \cite{LarssNS} is that Normal-play games are order-embedded in $\GS$.

\begin{definition}\label{defn:naturalembed}
For a Normal-play game $G$, let $\Num{G}$ be the scoring game obtained by replacing
each empty set, $\emptyset$, in $G$ by the atom $\emptyset^0$.
\end{definition}

This operation retains the game tree structure. For example, the leaves of a Normal-play game tree are labelled
$0=\{\emptyset\mid \emptyset\}$ which is replaced by
$0=\langle\emptyset^0\mid\emptyset^0\rangle$ for the scoring game.

\begin{theorem}[\cite{LarssNS}]\label{thm:order}
Let $\Np$ be the set of Normal-play games. The set $\{\Num{G}:G\in \Np\}$
induces an order-embedding of $\Np$ in $\GS$.
\end{theorem}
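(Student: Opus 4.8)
The plan is to realise $\Num{\cdot}$ as a monoid homomorphism that carries the Normal-play order faithfully onto the $\su$-order restricted to images, and to prove the required biconditional: $G\ge H$ in $\Np$ if and only if $\Num G\su\Num H$ in $\GS$. First I would record the routine structural facts, each by induction on birthdays: that $\Num{\cdot}$ is additive, $\Num{G+H}=\Num G+\Num H$ (the removal convention noted after Definition~\ref{def:disjunctive} matches empty option sets in the Normal-play sum); that it sends Normal-play negatives to conjugates, $\Conj{\Num G}=\Num{-G}$ (since every atom of $\Num G$ is $\emptyset^0$ and $\emptyset^{-0}=\emptyset^0$); and that conjugation is order-reversing on $\GS$, which follows from the stop identities $\LS(\Conj G)=-\RS(G)$ and $\RS(\Conj G)=-\LS(G)$ together with $\Conj{G+X}=\Conj G+\Conj X$.

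The substantive core is the single implication
$$P\ge 0 \text{ in }\Np \ \Longrightarrow\ \Num P\su 0 \text{ in }\GS. \qquad (\star)$$
I would prove $(\star)$ by a strategy-transfer argument: fixing an arbitrary test game $X\in\GS$, I show $\LS(\Num P+X)\ge\LS(X)$ and $\RS(\Num P+X)\ge\RS(X)$ by having Left play her Normal-play strategy witnessing $P\ge 0$ inside the $\Num P$-component while copying the stop-optimal line of $X$ in the other component. Because all atoms of $\Num P$ carry score $0$, the $\Num P$-component never changes the score and acts purely as tempo; because $P\ge 0$, Left always has a reply in $\Num P$ to any Right move there, so Right can never use $\Num P$ to force Left into a premature terminal move in $X$. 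The guaranteed condition (Definition~\ref{def:guaranteed}), i.e.\ non-positive incentive at every atomic follower, is exactly what makes this tempo bookkeeping valid, and is where I expect the main difficulty: one must show that interleaving the two components cannot let the player who runs out of moves in $\Num P$ be the one who profits, so that the final score in $\Num P+X$ is at least the stop reached in $X$ alone.

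Granting $(\star)$, the rest is assembly. Applying $(\star)$ to $H-H\sim 0$ and to its conjugate gives $\Num{H-H}\su 0$ and, via order-reversing conjugation, $\Num{H-H}\pr 0$, so $\Num H+\Conj{\Num H}=\Num{H-H}\sim 0$; thus every image $\Num H$ is invertible with inverse $\Conj{\Num H}$. Lemma~\ref{lem:dis} then lets me strip $\Num H$: $\Num G\su\Num H$ iff $\Num G+\Conj{\Num H}\su \Num H+\Conj{\Num H}\sim 0$, i.e.\ iff $\Num{G-H}\su 0$. Hence the forward direction of the embedding is immediate: $G\ge H$ in $\Np$ gives $G-H\ge 0$, so $(\star)$ yields $\Num{G-H}\su 0$, i.e.\ $\Num G\su\Num H$.

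For the converse I would prove the contrapositive of $(\star)$: if $P\not\ge 0$ in $\Np$ then $\Num P\not\su 0$ in $\GS$. Here $P\not\ge 0$ means Right wins $P$ moving first, and I would convert Right's first-player strategy into an explicit witnessing test game built from waiting moves (images of Normal-play integers $\Num n$, granting a player extra tempo without changing the score) together with a single small real atom $\epsilon>0$ that makes the lost tempo register in the score; for a suitable choice of $n$ this forces $\LS(\Num P+X)<\LS(X)$ (or the analogous $\RS$ inequality), so $\Num P\not\su 0$. Combined with the reduction above, $\Num G\su\Num H$ then forces $\Num{G-H}\su 0$, hence $G-H\ge 0$, hence $G\ge H$; together with the forward direction this is the claimed order-embedding. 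The delicate point in this dual argument is choosing $n$ and $\epsilon$ so that the waiting moves are exhausted at precisely the moment Right's Normal-play threat matures---again a tempo-versus-score calculation governed by the guaranteed hypothesis.
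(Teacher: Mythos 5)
The paper itself offers no proof of Theorem~\ref{thm:order}: it is quoted from \cite{LarssNS}. So your proposal can only be judged on its own merits and against the machinery this paper provides, and on its merits it is sound. The reduction of the embedding to the single biconditional ``$P\ge 0$ in $\Np$ iff $\Num{P}\su 0$ in $\GS$'' --- via additivity of $\Num{\cdot}$, the identity $\Conj{\Num{G}}=\Num{-G}$, order-reversal of conjugation, invertibility of images ($\Num{H}+\Conj{\Num{H}}=\Num{H-H}\sim 0$), and cancellation by Lemma~\ref{lem:dis} --- is correct, and your strategy-transfer proof of $(\star)$ can be completed exactly as you indicate. The one lemma you need to make the ``tempo bookkeeping'' rigorous is this: in a guaranteed game, once a component reaches a left-atomic position with atom $\emptyset^s$, \emph{every} atom of every follower of that position is $\ge s$ (Definition~\ref{def:guaranteed}), so a component in which Left has run out of moves can never again contribute less than $s$, while the $\Num{P}$ component contributes exactly $0$; with that, Left's component-wise pairing strategy gives $\LS(\Num{P}+X)\ge\LS(X)$ and $\RS(\Num{P}+X)\ge\RS(X)$.

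Two remarks. First, your converse witness is over-engineered, and the ``delicate point'' you worry about (calibrating $n$ and $\epsilon$) does not arise: if $P\not\ge 0$, take $X=\langle\emptyset^{-1}\mid\emptyset^{0}\rangle$, which is purely atomic, guaranteed, and has $\RS(X)=0$. Left has no moves at all in $X$, so play in $\Num{P}+X$ is forced to alternate inside $\Num{P}$; Right runs his Normal-play first-player winning strategy there, the game ends with Left stuck, and the score is $0+(-1)=-1$, so $\RS(\Num{P}+X)\le -1<\RS(X)$. Formally this is a two-line induction: $P\not\ge 0$ yields some $P^R\le 0$ with $\RS(\Num{P}+X)\le\LS(\Num{P^R}+X)$, and $Q\le 0$ yields, for every $Q^L$, some $Q^{LR}\le 0$, whence $\LS(\Num{Q}+X)\le -1$. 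Second, the paper's toolkit offers a route that avoids both the strategy transfer and the witness: since every stop of every follower of $\Num{P}$ equals $0$, the definition of left-$0$-protected unwinds for $\Num{P}$ to exactly the Normal-play recursion ``for all $P^R$ there exists $P^{RL}\ge 0$'', so Theorem~\ref{thm:ettinger} gives $\Num{P}\su 0\Leftrightarrow P\ge 0$ directly; combined with your cancellation argument this is the shortest derivation available from stated results --- though Theorem~\ref{thm:ettinger} is itself imported from \cite{LarssNS}, so an argument of your kind must ultimately be carried out somewhere.
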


That is, $G\su H$ in Normal-play if and only if $\Num{G}\su \Num{H}$ in guaranteed games.

Let $n$ be an integer. The games $\Num{n}$ are called \textit{waiting moves}.
For example, $\Num{0}=\langle\emptyset^0\mid\emptyset^0\rangle = 0$ and
$\Num{1}=\langle 0\mid \emptyset^0\rangle$ and
$\Num{2}=\langle \Num{1}\mid \emptyset^0\rangle$. Regardless, the score of a waiting move
will be 0, but in a game $G+\Num{1}$, Left has the ability to force Right to play consecutive moves in the
$G$ component.

  The ability to pass may appear as something beneficial for 
a player. This is true in $\GS$ but not necessarily in the general universe of scoring games. 
 For example, let
 $G=\langle\emptyset^1\mid \langle\emptyset^{-9}\mid\emptyset^9\rangle\rangle$
  and note $G\not\in \GS$. Clearly Left wins playing first. 
 In $G+\Num{1}$,
 Left has no move in $G$ and she must play her waiting move, $\Num{1}$.
 Right then plays to $\langle\emptyset^{-9}\mid\emptyset^9\rangle$.
 Now Left has no move and the score is  $-9$, a Right win. 

There are useful inequalities relating Normal-play and
Scoring games.

\begin{definition}\label{def:projection}
Let $G\in \GS$, and let $G_x$ be as $G$, but with each
 atom replaced by $\emptyset^x$. Let
  $max(G) = \max \{ s\,|\, \emptyset^s \text{ is an atom in } G\}$
and $min(G)=\min \{s\,|\,  \emptyset^s \text{ is an atom in } G\}$.
Set $G_{min}=G_{min(G)}$ and $G_{max}=G_{max(G)}$.
\end{definition}

\begin{theorem}[Projection Theorem]\label{thm:projection}
\ Let $G\in \GS$. If $n =b(G)$ then
\begin{enumerate}
\item $G_{min}\pr G\pr G_{max}$
\item $min(G) -\widehat n \pr G\pr max(G) +\widehat n$
\item $b\left(min(G) -\widehat n\right)=b\left(max(G)+\widehat n\right)=n$.
\end{enumerate}
\end{theorem}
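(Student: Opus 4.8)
The plan is to isolate a single monotonicity principle for atoms, deduce Part~1 from it, transport Part~2 into Normal-play through the order-embedding of Theorem~\ref{thm:order}, and finish Part~3 by bookkeeping. The organizing statement I would prove first is an \emph{atom monotonicity} principle: if $G',G\in\GS$ share the same game tree and every atom of $G'$ is at most the corresponding atom of $G$, then $G'\pr G$. Its engine is a purely stop-theoretic fact, proved by induction on birthday straight from Definition~\ref{def:stops}: if $Y',Y$ have identical trees with pointwise-$\le$ atoms, then $\LS(Y')\le\LS(Y)$ and $\RS(Y')\le\RS(Y)$. The base case compares $\langle\emptyset^{\ell'}\mid\emptyset^{r'}\rangle$ with $\langle\emptyset^{\ell}\mid\emptyset^{r}\rangle$ for $\ell'\le\ell$ and $r'\le r$, and the inductive step simply carries the inequality through the $\max$ and $\min$ defining the stops.

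To pass from stops to the order $\pr$, I would fix an arbitrary $X\in\GS$ and observe that $G'+X$ and $G+X$ again have identical trees, with the atoms of $G'+X$ pointwise below those of $G+X$; applying the stop fact to $Y'=G'+X$ and $Y=G+X$ for every $X$ gives $G'\pr G$ by Definition~\ref{def:equivalence}. Part~1 then follows at once: $G_{min}$ and $G$ share the tree of $G$, and every atom of $G_{min}$ equals $min(G)\le$ each atom of $G$, so $G_{min}\pr G$; symmetrically $G\pr G_{max}$. (That $G_{min},G_{max}\in\GS$ is clear, since a game all of whose atoms coincide is trivially guaranteed.)

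For Part~2 I would combine Part~1 with the embedding. Let $G_0$ be $G$ with every atom set to $\emptyset^0$; by the Number Translation observation $G_{max}=G_0+max(G)$ and $G_{min}=G_0+min(G)$. Crucially $G_0=\Num{\gamma}$, where $\gamma$ is the Normal-play game with the same tree as $G$ (its empty option sets becoming the $\emptyset^0$'s of $G_0$), so $b(\gamma)=b(G)=n$. The classical Normal-play inequality $-n\le\gamma\le n$ for any game born by day $n$, fed through Theorem~\ref{thm:order}, yields $\widehat{-n}\pr G_0\pr\widehat n$. Adding the number $max(G)$ to $\widehat n\su G_0$ and $min(G)$ to $G_0\su\widehat{-n}$ via Lemma~\ref{lem:oneadded}, then composing with Part~1 and transitivity, gives $G\pr G_{max}=G_0+max(G)\pr max(G)+\widehat n$ and dually $min(G)-\widehat n=min(G)+\widehat{-n}\pr G_{min}\pr G$, where $-\widehat n$ is read as $\Conj{\widehat n}=\widehat{-n}$. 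Part~3 is then immediate: $b(\widehat n)=n$ follows from $\widehat n=\langle\widehat{n-1}\mid\emptyset^0\rangle$ and the fact that conjugation preserves birthday, while Number Translation shows adding a number leaves the tree, hence the birthday, unchanged; thus $b(max(G)+\widehat n)=b(min(G)-\widehat n)=n$.

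The step I expect to demand the most care is the tree-level claim underlying Part~1: that $G'+X$ and $G+X$ really have the same game tree with pointwise-ordered terminal scores. This is visually obvious but should be verified by induction on the sum of birthdays (as in Theorem~\ref{thm:monoidstructure}), noting that the case analysis in Definition~\ref{def:disjunctive} branches only on which option sets are empty (the tree shape), never on atom values, and that a terminal position of the sum occurs exactly when both summands are atomic, with score the sum of the two atoms. The only ingredient imported from outside the excerpt is the classical bound $-n\le\gamma\le n$ for Normal-play games of birthday $n$, which is standard.
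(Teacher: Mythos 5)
Your proposal is correct, and for Part~1 it is essentially the paper's own argument in modular dress: the paper proves $\LS(G_{min}+X)\le\LS(G+X)$ and $\RS(G_{min}+X)\le\RS(G+X)$ by an inline induction in which each player mimics moves across the two identically-shaped sums, which is precisely your atom-monotonicity lemma specialized to $Y'=G_{min}+X$ and $Y=G+X$; your factoring of the tree-shape claim into a separate stop lemma is a tidier packaging of the same idea. Where you genuinely diverge is Part~2. The paper stays inside its own toolkit: it observes $\Num{n}-\Num{n}\sim 0$, reduces $\min(G)-\Num{n}\pr G$ to $\min(G)\pr G+\Num{n}$, and settles that via the pass-allowed-stop/left-protection criterion of Theorem~\ref{thm:ettinger} (whose recursive clause it dispatches with only a brief ``the latter is easy to see''). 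You instead write $G_{max}\eqsim G_0+\max(G)$ and $G_{min}\eqsim G_0+\min(G)$ with $G_0=\Num{\gamma}$ the image of the Normal-play game $\gamma$ having the same tree, invoke the classical bound $-n\le\gamma\le n$ for games born by day $n$, push it through the order-embedding (Theorem~\ref{thm:order}), translate by numbers via Lemma~\ref{lem:oneadded}, and compose with Part~1 by transitivity. Your route buys a cleaner chain of formal implications---it avoids Ettinger-style protection arguments entirely and makes explicit the structural decomposition $G_{max}\eqsim\Num{\gamma}+\max(G)$, which the paper never states---at the cost of importing one external (though standard) Normal-play fact and of making Part~2 depend on Part~1, whereas in the paper the two parts are proved independently. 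Part~3 is the same bookkeeping in both treatments, and your care about $G_{min},G_{max}\in\GS$ and about reading $-\Num{n}$ as $\Conj{\Num{n}}=\Num{-n}$ fills in details the paper leaves implicit.
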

\begin{proof}
For part~1, for any $X$, we establish the inequalities $\LS(G_{min}+X)\le \LS(G+X)$ and $\RS(G_{min}+X)\le \RS(G+X)$. First, if the game $G+X$ is purely atomic, then, so is $G_{min}+X$, and the inequalities are trivial, given Definition~\ref{def:projection}.

Consider the game, $(G_{min}+X)^L$, obtained after an optimal move by Left. Ignoring the scores, Left can make exactly the same move in the game $G+X$, to say $(G+X)^L$. Because, we maintain an identical tree structure of the respective games, we get
$$\Ls\left(G_{min}+X\right)=\Rs\left((G_{min}+X)^L\right)\le \Rs\left((G+X)^{L}\right)\le \Ls\left(G+X\right),$$
by induction.

To prove the inequality for the Right scores, we consider the game $(G+X)^R$, obtained after an optimal move by Right. Ignoring the scores, Right can make exactly the same move in the game $G_{min}+X$, to say $(G_{min}+X)^R$. Therefore $$\Rs\left(G_{min}+X\right)\le \Ls\left((G_{min}+X)^R\right)\le \Ls\left((G+X)^{R}\right)= \Rs\left(G+X\right),$$
by induction.

For part~2, it suffices to prove that $\min(G)-\Num{N} \pr G$ (and the proof of second inequality 
is similar). It is easy to see that $\Num{N}-\Num{N} \sim 0$. Therefore, it suffices to prove that 
$\min(G) \pr G+\Num{N}$, which holds if and only if $\min(G)\le \Lsu(G+\Num{N})$ and the latter
 is easy to see. Part~3 follows by definition of waiting-moves.
\end{proof}

\subsection{Pass-allowed stops and Waiting moves}\label{sec:scores}

The following three points about the stops are immediate from the definitions but
we state them explicitly since they will appear in many proofs.

\begin{observation}\label{obs:sets}
Given a game $G\in \GS$,

\noindent (i) $\LS(G) \ge \RS(G^L)$ for all $G^L$, and there is some
$G^L$ for which $\LS(G) = \RS(G^L)$;

\noindent (ii) $\RS(G) \le \LS(G^R)$ for all $G^R$, and there is
some $G^R$ for which $\RS(G) = \LS(G^R)$;

\noindent (iii) $\LS(G+s) = \LS(G)+s$ for any number $s$.
\end{observation}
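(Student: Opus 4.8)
The plan is to read every claim straight off Definition~\ref{def:stops}: parts~(i) and~(ii) are pure unwindings of the definition, and part~(iii) is a short induction on the birthday.

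For part~(i) I would split on whether $G$ is left-atomic. If $\GL\neq\emptyset$, then the second clause of the definition of $\LS$ gives $\LS(G)=\max\{\RS(G^L):G^L\in\GL\}$, and the two assertions are precisely the two defining properties of this maximum: that it is an upper bound for the set $\{\RS(G^L)\}$, and that it is attained by at least one $G^L$. If instead $G$ is left-atomic there are no Left options at all, so both clauses are vacuous. Part~(ii) is the mirror statement, obtained by exchanging $\max$ with $\min$, $\RS$ with $\LS$, and left with right options; formally it is part~(i) read through the conjugate symmetry.

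For part~(iii) I would prove the stronger pair $\LS(G+s)=\LS(G)+s$ and $\RS(G+s)=\RS(G)+s$ \emph{simultaneously}, by induction on $b(G)$. Proving them together is what lets the recursion close, since the Left-stop of a non-atomic game is read from the Right-stops of its options and vice versa. The Number Translation observation supplies, in each of its four cases, the shape of $G+s$: if $G$ is purely atomic the base case is immediate; if $G$ is left-atomic but not right-atomic then $G+s=\langle\emptyset^{\ell+s}\mid\GR+s\rangle$ is again left-atomic, so $\LS(G+s)=\ell+s=\LS(G)+s$; and if $\GL\neq\emptyset$ the Left options of $G+s$ are exactly the $G^L+s$, whence
\[\LS(G+s)=\max_{G^L}\RS(G^L+s)=\max_{G^L}\bigl(\RS(G^L)+s\bigr)=\LS(G)+s,\]
where the middle equality is the induction hypothesis (the $\RS$-statement) applied to the smaller-birthday options $G^L$. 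The computation of $\RS(G+s)$ is symmetric.

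I do not expect a genuine obstacle: every step is forced by the definitions. The only care needed is bookkeeping rather than mathematics. In (i) and (ii) one must remember that the ``there is some $G^L$/$G^R$'' clause has content only when the relevant option set is non-empty. In (iii) one must carry the $\LS$- and $\RS$-statements through the induction as a single package, so that each is available to justify the other when passing between a game and its options; isolating the $\LS$-equation alone would leave the non-atomic step unprovable.
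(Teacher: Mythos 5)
Your proof is correct and takes the same route the paper intends: the paper states this observation without proof, calling it immediate from the definitions, and your unwinding of Definition~\ref{def:stops} for (i)--(ii) together with the simultaneous $\LS$/$\RS$ induction on birthday for (iii) is exactly the verification left to the reader (the joint induction being the one point of substance, which you correctly identify). The only caveat is that the existential clauses in (i)--(ii) should be read as applying only when the relevant option set is non-empty, which you also note.
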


The next result indicates that we only need to consider one of $\Ls$ and $\Rs$ for 
game comparison in $\GS$. However, in the sequel, the proofs that use induction on the birthdays need the inequalities for both the Left- and Right-stops, 
because we must consider games with a \emph{fixed} birthday. However, 
Theorem~\ref{thm:LRscores} enables a simple proof of Lemma~\ref{dl2}.

\begin{theorem}\label{thm:LRscores}
Let $G, H\in \GS$. Then $\LS(G+X)\ge \LS(H+X)$ for all
 $X\in \GS$ if and only if $\RS(G+Y)\ge \RS(H+Y)$ for all $Y\in \GS$.
\end{theorem}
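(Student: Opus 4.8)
The plan is to reduce the biconditional to a single implication by conjugation, and then to establish that implication by induction on birthdays, with waiting moves as the essential gadget. First I would record the routine facts, provable by direct induction from Definitions~\ref{def:stops} and~\ref{def:disjunctive}, that conjugation is an involution on $\GS$, that it distributes over disjunctive sum ($\Conj{(A+B)}=\Conj{A}+\Conj{B}$), and that it exchanges the two stops with a sign change: $\LS(\Conj{A})=-\RS(A)$ and $\RS(\Conj{A})=-\LS(A)$. Writing $G\ge_L H$ for ``$\LS(G+X)\ge\LS(H+X)$ for all $X$'' and $G\ge_R H$ for the analogue with $\RS$, these identities give $G\ge_L H\iff\Conj{H}\ge_R\Conj{G}$ and $G\ge_R H\iff\Conj{H}\ge_L\Conj{G}$, since $\Conj{X}$ ranges over all of $\GS$ as $X$ does. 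Consequently, if I prove the single implication ``$G\ge_L H\Rightarrow G\ge_R H$'' for every pair, then applying it to the pair $(\Conj{H},\Conj{G})$ and translating through the two equivalences above yields ``$G\ge_R H\Rightarrow G\ge_L H$'' for every pair; so one direction suffices.

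Assume then $G\ge_L H$, fix $Y$, and aim for $\RS(G+Y)\ge\RS(H+Y)$. Since $\RS(G+Y)$ is the minimum of $\LS$ over the Right options of $G+Y$ (Observation~\ref{obs:sets}(ii)), it is enough to show every such option has Left-stop at least $\RS(H+Y)$. A Right move in $G+Y$ is either a move in $Y$, producing $G+Y^R$, or a move in $G$, producing $G^R+Y$. The first type is immediate: the hypothesis at $X=Y^R$ gives $\LS(G+Y^R)\ge\LS(H+Y^R)$, and $H+Y^R$ is itself a Right option of $H+Y$, so $\LS(H+Y^R)\ge\RS(H+Y)$ by Observation~\ref{obs:sets}(ii); chaining gives $\LS(G+Y^R)\ge\RS(H+Y)$. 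In particular, when $G$ is right-atomic only this type (or termination) occurs and the argument closes, while the purely atomic base cases reduce to the numerical identity $\LS(\,\cdot+\Num{1}\,)=\RS(\,\cdot\,)$ that a single Left waiting move forces on an atom.

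The crux is the second type, $G^R+Y$: I must show $\LS(G^R+Y)\ge\RS(H+Y)$, and here $G\ge_L H$ says nothing about $G^R$ directly. The tool is again the waiting move. Probing the hypothesis at $X=Y+\Num{1}$, and using that a Left waiting move always offers Left the ``pass'' option valued $\RS(\,\cdot\,)$, gives $\LS(G+Y+\Num{1})\ge\LS(H+Y+\Num{1})\ge\RS(H+Y)$, whereas the recursion unfolds the left side as $\max\{\RS(G+Y),\ \max_{(G+Y)^L}\RS((G+Y)^L+\Num{1})\}$. The step I expect to be the real obstacle is ruling out that this maximum is attained on an aggressive Left option while $\RS(G+Y)$ itself stays below $\RS(H+Y)$. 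I would resolve this by iterating the probe with several waiting moves $\Num{n}$ and with Right's waiting moves $\Conj{\Num{n}}$ — which make the all-$X$ Left-stop hypothesis ``see'' the Right options of $G$ — invoking the pass-allowed stops (\cite{LarssNS}) and the guaranteed property that atomic positions carry non-positive incentive, so that the pass branch becomes governing and the bound collapses to $\RS(G+Y)\ge\RS(H+Y)$. The whole argument is organized as an induction on the sum of birthdays, carrying the inequalities for both stops on the options, exactly as the remark preceding the statement anticipates.
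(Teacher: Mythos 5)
Your conjugation reduction is sound: since $\Conj{(A+B)}=\Conj{A}+\Conj{B}$, $\LS(\Conj{A})=-\RS(A)$, and conjugation is a bijection of $\GS$ onto itself, proving one implication does yield the other, and your Type-1 case (Right options $G+Y^R$) is handled correctly. But there is a genuine gap exactly where you flag ``the real obstacle'': the options $G^R+Y$. You need $\LS(G^R+Y)\ge\RS(H+Y)$, and no probe of the form $X=Y\pm\Num{n}$ can deliver it. Waiting moves carry tempo but no score: in $\LS(G+Y+\Num{n})\ge\LS(H+Y+\Num{n})\ge\RS(H+Y)$, the left-hand side unfolds into a maximum that may be attained by an aggressive Left option $(G+Y)^L$ for \emph{every} $n$ (giving Left more waiting moves only strengthens those options, it never forces the pass branch to govern), while Right's waiting moves $\Conj{\Num{n}}$ merely let Right postpone moving in $G$, so the resulting inequalities still compare stops of whole sums from which the single quantity $\LS(G^R+Y)$ cannot be isolated. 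Nothing in the guaranteed property rescues this either: guaranteedness constrains atoms, not the relation between $\LS(G+\cdot)$ and the Left-stops of $G^R+\cdot$. So your sketch stops precisely at the point where the content of the theorem lies, with a hope rather than an argument.

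The missing idea is a score-laden threat, not a tempo gadget. The paper argues via the claim that a Left-stop separation produces a Right-stop separation: given $X$ with $\LS(G+X)>\LS(H+X)$, set $M=\max\{\LS(G+X)-\RS(G^R): G^R\in \GR\}$ and $Y=\langle M\mid X\rangle$. If Right ever moves in the $G$ component of $G+Y$, Left answers $Y\to M$ and the score $\RS(G^R)+M\ge\LS(G+X)$ punishes him; hence Right's only sensible move is $Y\to X$, which forces $\RS(G+Y)=\LS(G+X)$ exactly, whereas $\RS(H+Y)\le\LS(H+X)$ because $H+X$ is a Right option of $H+Y$. Thus $\RS(G+Y)>\RS(H+Y)$, and the theorem follows by contraposition, with no induction at all. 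This also corrects your framing: the remark preceding the theorem about induction on birthdays refers to the paper's \emph{other} proofs (the reduction theorems), not to this one, whose proof is a direct construction. If you want to salvage your outline, replace the waiting-move probes in the $G^R+Y$ case by this switch-like gadget; your conjugation step then cleanly supplies the converse direction that the paper dismisses as analogous.
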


\begin{proof}
The proof depends on the following result.

\textit{Claim~1:} Given $G$ and $H$ in $\GS$, then there exists $X\in \GS$
 such that $\LS(G+X)> \LS(H+X)$ iff there exists a game $Y\in \GS$ such that
$\RS(G+Y)> \RS(H+Y)$.\\

\noindent \textit{Proof of Claim~1.}
Suppose that there is some $X$ such that $\LS(G+X)>\LS(H+X)$.
Let $M=\max\{\LS(G+X) -\RS(G^R) : G^R\in \GR\}$ and let $G^{R'}$  be an option where
$M=\LS(G+X) -\RS(G^{R'}) $.
Put
$Y=\langle M \mid   X\rangle$.

Now, $\LS(G^R+Y)\ge \RS(G^R+M)$, since $M$ is a Left option of $Y$.
For any $G^R\in \GR$,
\begin{eqnarray*}
\RS(G^R+M)&=&\RS(G^R)+M,\mbox{ by Observation~\ref{obs:sets} (iii),} \\
&=& \RS(G^R)+\LS(G+X) -\RS(G^{R'})\\
&=& \LS(G+X)+\RS(G^R)-\RS(G^{R'})\\
&\ge& \LS(G+X).
\end{eqnarray*}
Therefore,  $\LS(G^R+Y)\ge \LS(G+X)$.
 Thus
\[\RS(G+Y) = \min\{\LS(G+X), \LS(G^R+Y)   : G^R\in \GR\} =  \LS(G+X)\]

Now,
$\RS(G+Y) =  \LS(G+X)>\LS(H+X)\ge \RS(H+Y)$
where the first inequality follows from the assumption about $X$, and, since
$X$ is a Right option of $Y$,
the second inequality follows from Observation~\ref{obs:sets} (ii).

 \textit{End of the proof of Claim~1.}\\

Suppose
$\LS(G+X)\ge \LS(H+X)$ for all $X$. By Claim~1, there is no game $Y$ for which
 $\RS(G+Y)<\RS(H+Y)$, in other words, $\RS(G+Y)\ge \RS(H+Y)$ for all $Y$.
 \end{proof}

In the next definition,
``pass-allowed'' typically means that one player has an arbitrary number of waiting moves in another component.

    \begin{definition}[\cite{LarssNS}]\label{def:strongstop}
    Let $G\in \GS$. Then
    $\LSu(G)=\min\{\LS(G-\Num{n}\, ):n\in\mathbb{N}_0\}$ is
    \emph{Right's pass-allowed Left-stop} of $G$.
    \emph{Left's pass-allowed Right-stop} is defined analogously,
    $\RSo(G)=\max\{\RS(G+\Num{n}):n\in\mathbb{N}_0\}$. We also define
    $\LSo(G)=\max\{\LS(G+\Num{n}\, ):n\in\mathbb{N}_0\}$ and
    $\RSu(G)=\min\{\RS(G+\Num{n}\, ):n\in\mathbb{N}_0\}$.
    \end{definition}
The `overline' indicates that Left can pass and the `underline' that Right can pass. 
Note that, in $\LSo(G)$, Left can even start by passing.

\begin{lemma}\label{lem:waiting moves}
Let $G\in \GS$. 
If $n\ge b(G)$ then $\LSu(G)=\LS(G-\Num{n})$ and $\RSo(G)=\RS(G+\Num{n})$.
\end{lemma}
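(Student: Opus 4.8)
The plan is to reduce both identities to a single \emph{stabilization} statement about Left-stops, and then to prove that statement by induction on the birthday.

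First I would dispose of the Right-stop identity by conjugation. Using $\Conj{\Num n}=-\Num n$ together with the routine facts (each an easy induction on birthday) that $\Conj{G+X}=\Conj G+\Conj X$, that $\LS(\Conj G)=-\RS(G)$ and $\RS(\Conj G)=-\LS(G)$, one gets $\RS(G+\Num n)=-\LS(\Conj G-\Num n)$ and hence $\RSo(G)=-\LSu(\Conj G)$. Since $b(\Conj G)=b(G)$, the desired $\RSo(G)=\RS(G+\Num n)$ for $n\ge b(G)$ is \emph{exactly} the statement $\LSu(\Conj G)=\LS(\Conj G-\Num n)$ for $n\ge b(\Conj G)$. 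So it suffices to prove $\LSu(G)=\LS(G-\Num n)$ for all $n\ge b(G)$. Next I would record monotonicity: in Normal-play $n+1\ge n$, so $\Num{n+1}\su\Num n$ by the order-embedding (Theorem~\ref{thm:order}); conjugation reverses $\su$, giving $-\Num n\su-\Num{n+1}$, and Lemma~\ref{lem:oneadded} yields $G-\Num n\su G-\Num{n+1}$. Evaluating the defining inequality at $X=0$ shows $n\mapsto\LS(G-\Num n)$ is non-increasing, so $\LSu(G)$ is its minimum. Consequently the lemma reduces to showing the sequence is eventually constant, namely $\LS(G-\Num n)=\LS(G-\Num{n+1})$ for every $n\ge b(G)$: the common value is then attained at $n=b(G)$ and, being a lower bound for all earlier terms by monotonicity, equals $\LSu(G)$.

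The stabilization is the heart of the matter, and I would prove it by induction on $b(G)$, simultaneously with a companion statement for Right-stops, using deliberately offset thresholds: \emph{(L)} $\LS(G-\Num n)=\LS(G-\Num{n+1})$ for all $n\ge b(G)$, and \emph{(R)} $\RS(G-\Num n)=\RS(G-\Num{n+1})$ for all $n\ge b(G)+1$. The base case $b(G)=0$ (a number $s$) is direct, since $\LS(s-\Num n)=\RS(s-\Num n)=s$ for all $n\ge0$. For the inductive step I would expand the stops through the disjunctive-sum rule. If $G$ is left-atomic then $G-\Num n$ is left-atomic for $n\ge1$, so $\LS(G-\Num n)=\ell$ is constant and (L) holds. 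If $G$ is not left-atomic then $-\Num n$ contributes no Left option, so $\LS(G-\Num n)=\max_{G^L}\RS(G^L-\Num n)$, and (L) for $G$ follows from (R) applied to each $G^L$, which is legitimate because $b(G^L)\le b(G)-1$ and $n\ge b(G)\ge b(G^L)+1$. The Right-stop recursion is where the waiting move intervenes: Right may either move in $G$ or spend one waiting move, so for $n\ge1$,
\[
\RS(G-\Num n)=\min\Big(\{\LS(G^R-\Num n):G^R\in\GR\}\cup\{\LS(G-\Num{n-1})\}\Big),
\]
with the first set absent when $G$ is right-atomic.

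The main obstacle is precisely the term $\LS(G-\Num{n-1})$ produced by the waiting move, whose index drops below $n$; this is the entire reason for the one-unit offset between the thresholds of (L) and (R). Restricting (R) to $n\ge b(G)+1$ guarantees $n-1\ge b(G)$, so that $\LS(G-\Num{n-1})=\LS(G-\Num n)$ is delivered by (L) for $G$ itself, already established in the same induction step; meanwhile each option satisfies $b(G^R)\le b(G)-1<n$, so $\LS(G^R-\Num n)=\LS(G^R-\Num{n+1})$ follows from the hypothesis (L) applied to $G^R$. Matching the two displayed expressions for $\RS(G-\Num n)$ and $\RS(G-\Num{n+1})$ term by term then closes (R). I would finally verify that the threshold $b(G)+1$ in (R) is compatible with its single use inside the proof of (L) — there the index satisfies $n\ge b(G)\ge b(G^L)+1$ — which confirms that the simultaneous induction is not circular. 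With (L) in hand, monotonicity together with the conjugation reduction of the first paragraph yields the lemma.
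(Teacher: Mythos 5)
Your proof is correct, but it is organized quite differently from the paper's. The paper disposes of the lemma in a few lines: the order-embedding (Theorem~\ref{thm:order}) gives $G-\Num{b(G)}\su G-\Num{n}$ for $n\ge b(G)$, hence $\LS(G-\Num{b(G)})\ge \LS(G-\Num{n})\ge \LSu(G)$, and the reverse inequality $\LSu(G)\ge \LS(G-\Num{b(G)})$ is justified by the strategic remark that, since Left moves first, Right never needs more than $b(G)$ waiting moves before Left runs out of moves in $G$; the Right-stop statement is then declared analogous. Your argument formalizes exactly the step the paper leaves to intuition: the assertion that extra waiting moves are useless is your stabilization statement, and you prove it by a simultaneous induction on birthday whose offset thresholds ((L) from $b(G)$, (R) from $b(G)+1$) precisely absorb the index drop $\Num{n}\mapsto\Num{n-1}$ created by the waiting-move option in the Right-stop recursion; you also verify the induction is not circular, and you dispatch the Right-stop half by conjugation ($\RSo(G)=-\LSu(\Conj{G})$) rather than by repeating the argument with roles swapped. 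What your route buys is a fully constructive, self-contained proof of the claim the paper only gestures at; what it costs is length, plus a handful of conjugation identities ($\Conj{G+X}=\Conj{G}+\Conj{X}$, $\LS(\Conj{G})=-\RS(G)$, order-reversal of conjugation) that the paper never states and that each need their own small inductions. One slip to fix: birthday-$0$ games are not only numbers but all games $\pura{\ell}{r}$ with $\ell\le r$; when $\ell<r$ your base-case claim that $\RS(G-\Num{n})$ is constant for all $n\ge 0$ fails at $n=0$ (the values are $r$ at $n=0$ and $\ell$ thereafter), though constancy does hold from $n\ge 1=b(G)+1$ on, which is all that your statement (R) asserts --- so the error lies in the parenthetical ``a number $s$'', not in the structure of the induction.
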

\begin{proof}
Suppose that $n\ge b(G)$. By Theorem~\ref{thm:order}, we have $G - \Num{b(G)} \su G - \Num{n}$ which gives $\LS(G - \Num{b(G)}) \ge \LS (G - \Num{n})\ge \min \{\LS (G - \Num{m})\}=\LSu(G)$. Since Left begins, Right does not require more than $b(G)$ waiting-moves, until Left has run out of moves in $G$. Hence $\LSu(G) = \min \{\LS (G - \Num{m})\} \ge \LS(G - \Num{b(G)})$. This proves the first claim, and the claim for the Right-stop is analogous.
\end{proof}

From this result, it follows that the first part of Definition~\ref{def:strongstop} is equivalent to: for $G\in \GS$ and $n= b(G)$, $\LSu(G)=\LS(G-\Num{n})$ and $\RSo(G)=\RS(G+\Num{n})$. 
The pass-allowed Left- and Right-stops of a disjunctive sum of games can be bounded by pass-allowed stops of the respective game components.

\begin{theorem}\label{thm:pallowscores} (Pass-allowed Stops of Disjunctive Sums)\\
For all  $G,H\in{\GS}$ we have $$\LSu(G)+\RSu(H)\le\LSu(G+H)\le \LSu(G)+\LSu(H).$$
Symmetrically $$\RSo(G)+\RSo(H)\le\RSo(G+H)\le \RSo(G)+\LSo(H).$$
\end{theorem}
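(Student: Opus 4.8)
My plan is to reduce both displayed lines to the single pair of inequalities for $\LSu$,
$$\LSu(G)+\RSu(H)\le\LSu(G+H)\le\LSu(G)+\LSu(H),$$
and then recover the line for $\RSo$ by conjugation. Using that conjugation is additive ($\Conj{G+H}=\Conj G+\Conj H$, an easy induction), that $\LS(\Conj G)=-\RS(G)$, and that $\Num n=\Conj{-\Num n}$, one checks the dictionary $\RSo(G)=-\LSu(\Conj G)$ and $\LSo(G)=-\RSu(\Conj G)$. Applying the two $\LSu$-inequalities to $\Conj G,\Conj H$ and negating turns the upper bound into $\RSo(G+H)\ge\RSo(G)+\RSo(H)$ and the lower bound into $\RSo(G+H)\le\RSo(G)+\LSo(H)$, i.e.\ the second line. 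So it suffices to prove the first.

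Next I would extract working recursions. By Lemma~\ref{lem:waiting moves} I fix $N\ge b(G)+b(H)$ once and read $\LSu(X)=\LS(X-\Num N)$ and $\RSu(X)=\RS(X-\Num N)$ for every position $X$ that arises. Unwinding Definition~\ref{def:stops} on $X-\Num N$ (where only Right has waiting moves) gives, for $X$ not left-atomic, $\LSu(X)=\max\{\RSu(X^L):X^L\in\XL\}$, and always $\RSu(X)=\min\bigl(\LSu(X),\ \min\{\LSu(X^R):X^R\in\XR\}\bigr)$, the first term arising from Right's pass. In particular $\RSu(X)\le\LSu(X)$. The one place the guaranteed hypothesis enters is the key fact: if $A$ is left-atomic with left-atom $\ell$, then $\LSu(A)=\RSu(A)=\ell$. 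Indeed $\LSu(A)=\ell$ is immediate, and since every atomic follower of any $A^R$ has score at least $\ell$ by Definition~\ref{def:guaranteed}, Theorem~\ref{thm:projection} gives $\LSu(A^R)\ge\ell$, so the minimum defining $\RSu(A)$ collapses to $\ell$.

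With these in hand I would run two inductions on $b(G)+b(H)$, each pairing the target with an $\RSu$-companion so that the recursions close. For the upper bound I prove simultaneously $\LSu(G+H)\le\LSu(G)+\LSu(H)$ and $\RSu(A+B)\le\RSu(A)+\LSu(B)$ (and its mirror, by commutativity): the $\LSu$-step reads off Left's optimal first move and applies the companion at a smaller birthday, while the $\RSu$-step is a minimum, hence bounded by one well-chosen branch — Right's pass (reducing to the upper bound at the \emph{same} birthday) or a reversing move $A^R$ (smaller birthday) — so settling the $\LSu$-line before the $\RSu$-line at each level avoids circularity. For the lower bound I prove simultaneously $\LSu(G)+\RSu(H)\le\LSu(G+H)$ and the superadditive companion $\RSu(A)+\RSu(B)\le\RSu(A+B)$, the $\LSu$-step having Left play the move realizing $\LSu(G)$ and invoking the companion.

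The step I expect to be the genuine obstacle is the lower bound when the first summand $G$ is left-atomic: Left has no move there, the inductive move must live in $H$, and the companion only delivers $\RSu(G)+\LSu(H)$, which a priori is smaller than the required $\LSu(G)+\RSu(H)=\ell_G+\RSu(H)$ because $\RSu(G)$ can drop below $\LSu(G)=\ell_G$ for a general member of $\s$. This is exactly where the key fact is essential: for guaranteed $G$ it forces $\RSu(G)=\ell_G$, so $\RSu(G)+\LSu(H)=\ell_G+\LSu(H)\ge\ell_G+\RSu(H)$ and the induction goes through. This is the single point at which being in $\GS$ rather than $\s$ is used, matching the intuition that an atomic component has non-positive incentive; once the first line holds, the conjugation of the opening paragraph finishes the theorem.
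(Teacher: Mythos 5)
Your proposal is correct, but it takes a genuinely different route from the paper's. The paper's proof is a brief strategic argument: for the upper bound, Right plays second in $G+H-\Num{N}$ with $N=b(G)+b(H)$, regards it as $(G-\Num{n})+(H-\Num{m})$, and confines his replies to the component in which Left just moved; for the lower bound, Left plays her locally optimal strategies componentwise, her waiting moves covering the bookkeeping; the second displayed line is declared ``analogous''. You instead extract exact recursions for $\LSu$ and $\RSu$ from Lemma~\ref{lem:waiting moves}, run two simultaneous inductions on $b(G)+b(H)$ with $\RSu$-companion inequalities, and recover the second line from the first by conjugation. What your route buys is rigour exactly where the paper's strategy-copying sketch is silent: the position where a player runs out of moves in one component. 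You correctly isolate this as the single point where the guaranteed hypothesis enters, via the key fact $\LSu(A)=\RSu(A)=\ell$ for left-atomic $A\in\GS$; and the hypothesis really is needed there, since for the paper's own non-guaranteed example $G=\langle\emptyset^1\mid\langle\emptyset^{-9}\mid\emptyset^9\rangle\rangle$ one has $\LSu(G+\Num{1})\le\LS(G+\Num{1})=-9<1+0=\LSu(G)+\RSu(\Num{1})$, so the lower bound fails in $\s$. What the paper's route buys is brevity and strategic intuition. Two small repairs your write-up should make explicit: Lemma~\ref{lem:waiting moves} is stated only for $\LSu$ and $\RSo$, so the identity $\RSu(X)=\RS(X-\Num{N})$ for $N\ge b(X)$ needs its own (analogous) one-line proof --- note that Definition~\ref{def:strongstop} literally prints $\RSu(G)=\min\{\RS(G+\Num{n})\}$, a sign slip that you silently and correctly read as $G-\Num{n}$; and your fixed $N$ should be taken strictly larger than $b(G)+b(H)$ (or the lemma re-invoked per position) so that the pass branch $\LS(X-\Num{N-1})=\LSu(X)$ in your $\RSu$-recursion is exact.
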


\begin{proof}
\noindent Let $n=b(G)$ and $m=b(H)$ and let $N=n+m$.

 Right plays second in $G+H-\widehat{N}$ which he can
regard as $(G-\widehat{n}) + (H-\widehat{m})$. He can restrict his moves
to responding only in the component in which Left has
just played and he has enough waiting moves to force Left to start both components.
Thus he can achieve
$\LSu(G)+\LSu(H)$, i.e., $\LSu(G+H)\le\LSu(G)+\LSu(H)$.

In the global game $G+H$, suppose that Right responds in $H$ to Left's first move in $G$,
then,  for the rest of the game, Left can copy each local move in the global setting and has enough
waiting moves to achieve a score of $\LSu(G)+\RSu(H)$. Since she has other strategies,
we have $\LSu(G)+\RSu(H)\le\LSu(G+H)$. The other inequality is proved analogously.
\end{proof}

The results for the rest of the paper are sometimes stated only for Left. 
The proofs for Right are the same with the roles of Left and Right interchanged.

\begin{corollary}\label{cor:emptywaiting}
Let $G,H\in{\GS}$. If
$H=\left\langle \emptyset^h\! \mid \! \HR\right\rangle$
then $\LS(G+H)\ge \LSu(G+H)= \LSu(G)+h$.
\end{corollary}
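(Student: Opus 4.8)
The statement bundles three claims: the inequality $\LS(G+H)\ge\LSu(G+H)$ together with the equality $\LSu(G+H)=\LSu(G)+h$. The plan is to dispose of the inequality at once, and then obtain the equality by squeezing $\LSu(G+H)$ between $\LSu(G)+h$ from above and below, invoking the two bounds of Theorem~\ref{thm:pallowscores} and supplying two short computations about the left-atomic game $H$.

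The inequality $\LS(G+H)\ge\LSu(G+H)$ is immediate from Definition~\ref{def:strongstop}: since $\LSu(G+H)=\min\{\LS(G+H-\Num{n}):n\in\mathbb{N}_0\}$ and $\Num{0}=0$, the term $n=0$ shows the minimum is at most $\LS(G+H)$. For the upper bound of the equality I would first compute $\LSu(H)=h$. Because $H$ is left-atomic with left-atom $\emptyset^h$, and every Right waiting move $-\Num{n}$ is also left-atomic with left-atom $\emptyset^0$, the disjunctive-sum rule (Definition~\ref{def:disjunctive}) makes $H-\Num{n}$ left-atomic with left-atom $\emptyset^h$; hence $\LS(H-\Num{n})=h$ for every $n$ and so $\LSu(H)=h$. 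Substituting into the right-hand bound of Theorem~\ref{thm:pallowscores} yields $\LSu(G+H)\le\LSu(G)+\LSu(H)=\LSu(G)+h$.

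The matching lower bound is the substantive part. Here I would show $\RSu(H)\ge h$ and combine it with the left-hand bound $\LSu(G)+\RSu(H)\le\LSu(G+H)$ of Theorem~\ref{thm:pallowscores}. To estimate $\RSu(H)=\min\{\RS(H+\Num{n}):n\in\mathbb{N}_0\}$, observe that in each sum $H+\Num{n}$ the component $\Num{n}$ is right-atomic, so Right can move only inside $H$, whence $\RS(H+\Num{n})=\min\{\LS(H^R+\Num{n}):H^R\in\HR\}$. Extra Left waiting moves never hurt Left: Theorem~\ref{thm:order} gives $\Num{n}\su 0$, and Lemma~\ref{lem:oneadded} then gives $H^R+\Num{n}\su H^R$, so $\LS(H^R+\Num{n})\ge\LS(H^R)$. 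Finally, $\LS(H^R)$ is realized at some atom of $H^R$ (an immediate consequence of Definition~\ref{def:stops}), hence at an atom of $H$, and every atom of $H$ has score at least $h$ by the guaranteed condition (Definition~\ref{def:guaranteed}, item~2). Therefore $\RS(H+\Num{n})\ge h$ for all $n$, giving $\RSu(H)\ge h$ and $\LSu(G+H)\ge\LSu(G)+\RSu(H)\ge\LSu(G)+h$.

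The two bounds coincide, so $\LSu(G+H)=\LSu(G)+h$, which together with the first inequality completes the proof. I expect the lower bound to be the main obstacle: it is exactly where the guaranteed hypothesis is indispensable. The condition $h\le s$ for every atom of the left-atomic $H$ is what prevents Right from profitably driving $H$ down to a low terminal score, ensuring $\RSu(H)\ge h$; without it the corollary would fail.
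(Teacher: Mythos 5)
Your proposal follows the paper's proof almost exactly: the paper, too, proves the corollary by squeezing $\LSu(G+H)$ between the two bounds of Theorem~\ref{thm:pallowscores}, computing $\LSu(H)=h$ and $\RSu(H)=h$ (you prove $\RSu(H)\ge h$, which suffices), and it, too, locates the crux in the guaranteed condition: every atom of the left-atomic game $H$ is at least $h$, so Right cannot force a score below $h$.

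One caveat, which traces back to a typo in the paper rather than to your reasoning. You read the displayed definition $\RSu(G)=\min\{\RS(G+\Num{n}):n\in\mathbb{N}_0\}$ literally, so in your lower-bound computation Left holds the waiting moves and ``Right can move only inside $H$''. But under that literal reading the lower bound of Theorem~\ref{thm:pallowscores} is false: take $G=0$ and $H=\langle\emptyset^0\mid 5\rangle$; then $\LSu(G)+\RSu(H)=0+5=5$ while $\LSu(G+H)=\LSu(H)=0$. The quantity the theorem actually needs (and the one its own proof, as well as the paper's proof of this corollary, visibly use---``Right can achieve the score $h$ by passing'') is Right's pass-allowed Right-stop, $\min\{\RS(G-\Num{n}):n\in\mathbb{N}_0\}$. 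Under that corrected reading your step ``Right can move only inside $H$'' fails, because Right may pass by moving in the $-\Num{n}$ component; however, a Right pass leaves the left-atomic position $H-\Num{n-1}$, whose Left-stop is exactly $h$, and Right's moves inside $H$ are controlled by your own atom argument: every atom of $H$, hence of $H-\Num{n}$, is at least $h$, and stops are always atom values (the purely-atomic case $\HR=\emptyset^r$ gives $\RS(H-\Num{n})=r\ge h$ directly). So $\RS(H-\Num{n})\ge h$ for all $n$, the lower bound $\LSu(G+H)\ge\LSu(G)+h$ survives, and your proof is repaired with no new ideas; in substance it is the paper's proof.
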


\begin{proof}
By Theorem~\ref{thm:pallowscores} it suffices to show that $\Lsu(H)=\Rsu(H)=h$.
Since Left starts, $\Lsu(H)=h$. Now $\Rsu(H)\le h$, since
Right can achieve the score $h$ by passing. Since $H\in\GS$ then
 $h=\min\{x: \emptyset^x \text{ is an atom in } H\}$. Hence $\Rsu(H)=h$.
 That  $\LS(G+H)\ge \LSu(G+H)$ is by definition. 
\end{proof}

    \begin{definition}
    Let $s\in \mathbb{R}$ and $G\in \GS$.
   The game $G$ is \textit{left-$s$-protected} if  $\LSu(G)\ge  s$
   and either
    $G$ is right-atomic
    or for all    $G^R$, there exists $G^{RL}$
     such that $G^{RL}$ is left-$s$-protected. Similarly,
$G$ is right-$s$-protected if
     $\RSo(G)\le  s$ and, for all
     $G^{L}$, there exists $G^{LR}$
     such that $G^{LR}$ is right-$s$-protected.
    \end{definition}
    In \cite{LarssNS} we prove a necessary and sufficient condition for a game to be greater than or equal to a number.
\begin{theorem}[A Generalized Ettinger's Theorem \cite{LarssNS}]\label{thm:ettinger}
 Let $s\in \mathbb{R}$ and $G\in \GS$. Then $G\su  s$ if and only if $G$ is left-$s$-protected.\end{theorem}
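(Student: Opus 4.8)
The plan is to deduce the equivalence from a single-stop reformulation of $\su s$ and then prove the two implications by induction on birthdays; the forward implication will require assembling a single counterexample out of many, and that is the crux. By Observation~\ref{obs:sets}(iii) and its evident Right-stop analogue, $\LS(s+X)=\LS(X)+s$ and $\RS(s+X)=\RS(X)+s$, so $G\su s$ means $\LS(G+X)\ge\LS(X)+s$ and $\RS(G+X)\ge\RS(X)+s$ for every $X$. By Theorem~\ref{thm:LRscores} applied with $H=s$ these two families of inequalities are equivalent, whence
\[(\star)\qquad G\su s \iff \LS(G+X)\ge\LS(X)+s\ \text{ for all }X\in\GS.\]
In the inductions below I nevertheless track both stops, since each step fixes a birthday.

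\emph{Reverse implication} ($G$ left-$s$-protected $\Rightarrow G\su s$). Assuming $G$ is left-$s$-protected, I would prove by simultaneous induction on $b(G)+b(X)$ both (L) $\LS(G+X)\ge\LS(X)+s$ and (R) $\RS(G+X)\ge\RS(X)+s$. For (L): if $X$ is left-atomic, Corollary~\ref{cor:emptywaiting} gives $\LS(G+X)\ge\LSu(G)+\LS(X)\ge\LS(X)+s$ using $\LSu(G)\ge s$; otherwise Left moves to an option $X^L$ realizing $\LS(X)=\RS(X^L)$, and (R) for the smaller pair $(G,X^L)$ yields $\LS(G+X)\ge\RS(G+X^L)\ge\RS(X^L)+s=\LS(X)+s$. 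For (R) I show every Right option of $G+X$ is at least $\RS(X)+s$: an option $G+X^R$ satisfies $\LS(G+X^R)\ge\LS(X^R)+s\ge\RS(X)+s$ by (L) for $(G,X^R)$ together with Observation~\ref{obs:sets}(ii); an option $G^R+X$ occurs only when $G$ is not right-atomic, and then the protection clause supplies a left-$s$-protected $G^{RL}$, whence $\LS(G^R+X)\ge\RS(G^{RL}+X)\ge\RS(X)+s$ by (R) for the smaller pair $(G^{RL},X)$. The lone base case is $G+X$ right-atomic, where I need the Right-atom $g'$ of $G$ to satisfy $g'\ge s$; this is precisely where guaranteedness enters, as by Definition~\ref{def:guaranteed}(3) every atom of the right-atomic $G$ is $\le g'$, so every terminal score of $G-\Num n$ is $\le g'$, giving $\LSu(G)\le g'$ and hence $g'\ge\LSu(G)\ge s$.

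\emph{Forward implication} ($G\su s\Rightarrow G$ left-$s$-protected), by induction on $b(G)$. Putting $X=-\Num n$ in $(\star)$ and using $\LS(-\Num n)=0$ gives $\LS(G-\Num n)\ge s$ for all $n$, so $\LSu(G)\ge s$. If $G$ is right-atomic we are finished; otherwise fix a Right option $G^R$. Since $G$ is not right-atomic, $G^R+X$ is a Right option of $G+X$, so $G\su s$ forces
\[(\dagger)\qquad \LS(G^R+X)\ge\RS(G+X)\ge\RS(X)+s\quad\text{for all }X.\]
First, $G^R$ is not left-atomic: for the purely atomic $X=\langle\emptyset^0\mid\emptyset^{x'}\rangle\in\GS$ with $x'\ge 0$ one has $\RS(X)=x'$ while $\LS(G^R+X)$ equals the fixed Left-atom of $G^R$, so a large $x'$ would violate $(\dagger)$. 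Hence $G^R$ has Left options $G^{RL}_1,\dots,G^{RL}_k$. Suppose, for contradiction, that none satisfies $G^{RL}_i\su s$. By $(\star)$ each $G^{RL}_i$ admits a witness $W_i$ with $\LS(G^{RL}_i+W_i)<\LS(W_i)+s$; set $Z_i=W_i-\LS(W_i)$, so $\LS(Z_i)=0$ and $\LS(G^{RL}_i+Z_i)<s$. Form the left-atomic game
\[X^\ast=\big\langle\,\emptyset^{x^\ast}\ \big|\ Z_1,\dots,Z_k\,\big\rangle,\]
with $x^\ast$ chosen below every atom occurring in the $Z_i$, so that $X^\ast\in\GS$. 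Then $\RS(X^\ast)=\min_i\LS(Z_i)=0$, and Right's move $X^\ast\to Z_i$ shows $\RS(G^{RL}_i+X^\ast)\le\LS(G^{RL}_i+Z_i)<s$. As $X^\ast$ is left-atomic and $G^R$ is not, Left's only options in $G^R+X^\ast$ are the $G^{RL}_i+X^\ast$, so $\LS(G^R+X^\ast)=\max_i\RS(G^{RL}_i+X^\ast)<s=\RS(X^\ast)+s$, contradicting $(\dagger)$. Therefore some $G^{RL}_i\su s$, and by the induction hypothesis this $G^{RL}_i$ is left-$s$-protected, verifying the protection clause for $G^R$; as $G^R$ was arbitrary, $G$ is left-$s$-protected.

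The hard part is the construction of $X^\ast$. The obstruction is that $(\dagger)$ only guarantees that for each test game $X$ \emph{some} response $G^{RL}$ is adequate, whereas left-$s$-protection demands a \emph{single} $G^{RL}$ adequate against all $X$ simultaneously; $X^\ast$ bridges this gap by letting Right, once Left has committed to $G^{RL}_i$, select the matching refutation $Z_i$ among the Right options of $X^\ast$. The two delicate points are the turn-parity bookkeeping---Left commits and Right then answers, so each refutation must be recorded as a Left-stop bound $\LS(G^{RL}_i+Z_i)<s$, which is exactly the normalized form of the witness---and the verification that $X^\ast$ is genuinely in $\GS$; the latter is where the guaranteed hypothesis is indispensable, mirroring the failure of such comparisons in the general scoring universe $\s$.
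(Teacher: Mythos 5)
Your proof is correct, and it takes a genuinely different route from the paper's. The paper never actually proves Theorem~\ref{thm:ettinger} in-text: the result is imported from \cite{LarssNS}, and the only derivation offered is the remark following Theorem~\ref{thm:comparison} that it drops out as a corollary of Constructive Comparison by taking $H=s$ (a number has no Left and no Right options, so condition~2 becomes vacuous and condition~3 collapses into the recursive protection clause), a route that rests on the linked/adjoint machinery of Section~\ref{sec:gamecomparison}. You instead give a direct, self-contained induction, and your crux --- normalizing each witness $W_i$ to $Z_i=W_i-\LS(W_i)$ and packaging all of them as Right options of a single left-atomic test game $X^\ast=\langle\emptyset^{x^\ast}\mid Z_1,\dots,Z_k\rangle$ --- is in effect a hand-rolled special case of the game $T$ constructed in the proof of Lemma~\ref{lem:linked}, where per-option witnesses and adjoints are likewise assembled as options of one distinguishing game. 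Your version is simpler because comparison against a number only ever requires refuting the Left options of a single $G^R$: no adjoints and no Right-side witnesses are needed, and guaranteedness lets you choose $x^\ast$ below all atoms of the $Z_i$ so that $X^\ast\in\GS$. The delicate points all check out: the base case of your (R)-induction correctly extracts $g'\ge\LSu(G)\ge s$ from Definition~\ref{def:guaranteed}(3), the purely-atomic test game $\langle\emptyset^0\mid\emptyset^{x'}\rangle$ legitimately rules out left-atomic $G^R$, and your inductions are well-founded. What your approach buys: independence from Section~\ref{sec:gamecomparison} (you use only Theorem~\ref{thm:LRscores}, Observation~\ref{obs:sets}, and Corollary~\ref{cor:emptywaiting}, so there is no circularity, and the theorem could be proved already at the point where the paper states it). What the paper's route buys: the same packaging trick, executed once in full generality, yields comparison against an \emph{arbitrary} game $H$ (Theorem~\ref{thm:comparison}), of which this theorem is a one-line specialization --- machinery the paper needs anyway for uniqueness of reduced forms and for the Conjugate Property.
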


\section{Reductions  and Canonical Form}\label{sec:reductions}


 The reduction results, Theorems \ref{thm:domination}, \ref{thm:nonatomic}, and \ref{thm:atomic},
  give conditions under which the options of a game can be modified resulting
  in a game in the same equivalence class. In all cases, it 
  is easy to check that the new game is also in $\GS$. Theorem \ref{thm:substitute}
   requires an explicit check that the modified game is a guaranteed game. In Normal-play games, the reduction procedures result in a 
   unique game, which also has minimum birthday, called the `canonical form'.  
   It is noted by Johnson that both the scoring games he studied and those studied by Ettinger
   there may be many equivalent games with the minimum birthday. The same is true for guaranteed games. 
   However, Theorem \ref{thm:substitute} gives a reduction that while it does not necessarily reduce the birthday
   does lead to a unique reduced game. 

 The results in this section will often involve showing that $G\su H$ or $G\sim H$
  for some games $G$, $H$ where both have the same right options and they differ only slightly in the left options.
  Strategically, one would believe that only the non-common left options need to be considered in inductive proofs,
  that is, the positions of $(\GL\setminus \HL)\cup(\HL\setminus \GL)$.
  The next lemma shows that this is true. 
  
\begin{lemma}\label{lem:rightscores}
Let $F$ and $K$ be guaranteed games with the same sets of right options, and in case this set is empty, the atoms are 
identical. Let $X$ be a guaranteed game. 
\begin{enumerate}
\item  If $\LS(F+X^R)= \LS(K+X^R)$ for all $X^R\in\XR$ then $\RS(F+X)= \RS(K+X)$. 

\item If $\RS(F+X^L) \ge \RS(K+X^L)$, for all $X^L\in\XL$, and $\RS(F^L+X) = \LS(F+X)$, for some 
$F^L\in F^\mathcal{L}\cap K^\mathcal{L}$, 
then $\LS(F+X) \ge \LS(K+X)$.

\end{enumerate}
\end{lemma}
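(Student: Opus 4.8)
The plan is to handle the two parts separately, in each case unfolding the relevant stop through Definition~\ref{def:stops} and reading off the option structure of a disjunctive sum from Definition~\ref{def:disjunctive}: the Right options of $F+X$ are $\{F^R+X: F^R\in F^{\mathcal R}\}\cup\{F+X^R:X^R\in\XR\}$ and its Left options are $\{F^L+X:F^L\in F^{\mathcal L}\}\cup\{F+X^L:X^L\in\XL\}$ (with the atomic caveats of Definition~\ref{def:disjunctive}), and similarly for $K$. The structural input I would lean on throughout is that $F$ and $K$ share their Right options: hence $F+X$ is right-atomic exactly when $K+X$ is, and the families $\{F^R+X\}$ and $\{K^R+X\}$ coincide. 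The inequalities in the hypotheses should be read as the data an outer induction on the birthday of $X$ would supply.

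For Part~1 I would split on whether $F+X$ (equivalently $K+X$) is right-atomic. If it is, then both $F$ and $X$ are right-atomic and the shared, empty, Right-option set forces identical Right atoms, so $\RS(F+X)=r_F+r_X=r_K+r_X=\RS(K+X)$. If it is not, then by Definition~\ref{def:stops} $\RS(F+X)$ is the minimum of $\LS$ over its Right options; the contributions of the common options $F^R+X=K^R+X$ are literally equal, while $\LS(F+X^R)=\LS(K+X^R)$ by the hypothesis. The two minima are therefore taken over the same set of values and $\RS(F+X)=\RS(K+X)$. No induction beyond reading off the definitions is required here.

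For Part~2 the attainment condition~(ii) guarantees $F^{\mathcal L}\cap K^{\mathcal L}\neq\emptyset$, so $F^{\mathcal L}$ and $K^{\mathcal L}$ are both non-empty; hence $F$, $K$, and therefore $F+X$ and $K+X$, are non-left-atomic, and $\LS(K+X)$ equals the maximum of $\RS$ over the Left options of $K+X$. To get $\LS(F+X)\ge \LS(K+X)$ I would bound each such Left option by $\LS(F+X)$. For a move inside $X$, namely $K+X^L$ with $X^L\in\XL$, hypothesis~(i) gives $\RS(K+X^L)\le \RS(F+X^L)$, and since $F+X^L$ is a Left option of the non-left-atomic $F+X$, Observation~\ref{obs:sets}(i) yields $\RS(F+X^L)\le \LS(F+X)$. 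For a move to a \emph{common} Left option $K^L+X$ with $K^L\in F^{\mathcal L}\cap K^{\mathcal L}$, the same observation applies directly, because $K^L+X$ is then itself a Left option of $F+X$.

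The main obstacle is the remaining case: the Left options $K^L+X$ with $K^L\in K^{\mathcal L}\setminus F^{\mathcal L}$, the genuinely non-common moves of $K$. These are not Left options of $F+X$, so Observation~\ref{obs:sets} gives nothing directly, and on its own neither hypothesis~(i) nor the attainment condition~(ii) bounds $\RS(K^L+X)$. This is precisely the phenomenon flagged in the discussion preceding the lemma: it is only the symmetric difference of the Left-option sets that requires real work. It is here that condition~(ii)---that Left's optimal reply in $F+X$ is a move available in $K$ as well---must be combined with the relationship between $F$ and $K$ imposed by the reduction under consideration (domination or reversibility), so that the non-common options cannot overtake $\LS(F+X)$. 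Once every Left option of $K+X$ has been bounded by $\LS(F+X)$, taking the maximum delivers $\LS(F+X)\ge \LS(K+X)$.
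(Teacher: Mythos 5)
Your Part~1 is correct and is the paper's own argument in slightly different clothing: the paper cases on whether Right's optimal move in $F+X$ lies in the $X$ component (use the hypothesis) or in the $F$ component (mimic, since all of $F$'s Right options are also $K$'s), plus the right-atomic case; observing that the two minima range over identical sets of values amounts to the same thing.

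Part~2 is where the problem lies, and you were candid about it: after correctly bounding the options $K+X^L$ (hypothesis (i) plus Observation~\ref{obs:sets}(i)) and the common options $K^L+X$, you leave $\RS(K^L+X)$ for $K^L\in K^{\mathcal{L}}\setminus F^{\mathcal{L}}$ unbounded and defer to ``the relationship imposed by the reduction under consideration''. That is not a proof: a lemma cannot import hypotheses from the theorems that will later invoke it. So, as a proof of the printed statement, there is a genuine gap.

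However, that gap cannot be closed, because the printed statement is false: hypothesis (ii) as printed constrains where $\LS(F+X)$ is attained, and this cannot control the non-common Left options of $K$. Concretely, take $F=\langle 0\mid 10\rangle$, $K=\langle 0,100\mid 10\rangle$, $X=0=\pura{0}{0}$: both are guaranteed with the same Right options, (i) is vacuous since $X$ has no Left options, and (ii) holds with $F^L=0$, yet $\LS(F+X)=0<100=\LS(K+X)$. The hypothesis the paper actually intends---the one its one-sentence proof of Part~2 (``the respective Right-stops are obtained via a common option'') and all later invocations rely on (in Theorems~\ref{thm:domination} and~\ref{thm:nonatomic} the lemma is applied in the form ``it suffices to study the case $\LS(G+X)=\RS(A+X)$'', where $A$ is the non-common option of the game playing the role of $K$)---is that $\LS(K+X)$, not $\LS(F+X)$, is attained via a common option, i.e.\ $\LS(K+X)=\RS(K^L+X)$ for some $K^L\in F^{\mathcal{L}}\cap K^{\mathcal{L}}$. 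Under that reading your problematic case evaporates: $\LS(K+X)=\RS(K^L+X)\le\LS(F+X)$ by Observation~\ref{obs:sets}(i), since $K^L+X$ is a Left option of $F+X$; combined with your bound on the options $K+X^L$, this finishes Part~2 in one line. So your treatment of the tractable cases is right, and the obstruction you identified is exactly where the statement (not your argument) needs repair---the fix is to swap the roles of $F$ and $K$ in hypothesis (ii), not to borrow properties of domination or reversibility.
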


\begin{proof} 
\textit{Part 1:} 
We prove the `$\ge$' inequality and then `$\le$'  follows by symmetry. If Right's best move in $F+X$ 
is obtained in the $X$ component, then $\RS(F+X)=\LS(F+X^R)\ge \LS(K+X^R)\ge \min\{\LS((K+X)^R)\}=\RS(K+X)$. Otherwise, if Right's best move is in the $F$ component, then he 
achieve a score at least as good in $K+X$ by mimicking. 
 If there are no right-options in $F+X$ then neither are there any in $K+X$. Then, by assumption, the right-atom in $F+X$ is identical to the right-atom in $K+X$, and hence the Right-stops are identical. 

The proof of part 2 is very similar to that of part 1, since the respective Right-stops are obtained 
via a common option.
\end{proof}

For example, in part 2 of Lemma~\ref{lem:rightscores}, if $\RS(F^L+X) = \LS(F+X)$, for some $F^L\in F^\mathcal{L}\setminus K^\mathcal{L}$, then the inequality $\LS(F+X) \ge \LS(K+X)$ does not follow directly. As we will see later in this section, when it holds, it is by some other property of the games $F$ and $K$. 

The next result re-affirms that provided a player has at least one option then adding another option
cannot do any harm. This is not true if the player has no options. For example,
consider $G=\langle\emptyset^1\mid 2\rangle$, now adding the left option $-1$ to $G$ gives the game
 $H=\langle -1\mid 2\rangle$.
 But, since $\LS(G) = 1$ and $\LS(H) =0$ then $H\not \su G$.

\begin{lemma}\label{lem:greediness} (Monotone Principle)\\
Let $G\in{\GS}$. If  $|\GL|\ge 1$  then for any $A\in\GS$,
$ \langle \GL\cup A\mid {\GR}\rangle\su G$.
\end{lemma}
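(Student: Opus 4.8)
The plan is to prove directly from Definition~\ref{def:equivalence} that $H:=\langle \GL\cup A\mid \GR\rangle\su G$, by establishing $\LS(H+X)\ge \LS(G+X)$ and $\RS(H+X)\ge \RS(G+X)$ for every $X\in\GS$. As a preliminary I would record that $H$ is in fact a guaranteed game: the only nontrivial case is when $G$ is right-atomic, where condition~3 of Definition~\ref{def:guaranteed} forces the atoms of $A$ to lie below the right atom; under the hypotheses this is the ``easy to check'' verification mentioned at the start of the section. As noted just before Theorem~\ref{thm:LRscores}, I would not reduce to a single stop but prove both inequalities simultaneously by induction on $b(X)$, since the two are coupled: each feeds the other at strictly smaller birthday.

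For the Right-stop inequality I would compare Right's first moves in $G+X$ and $H+X$. Since $G$ and $H$ share the right options $\GR$ (and the same right atom when $\GR$ is empty), every Right move inside the $G$/$H$ component produces the identical position $G^R+X$ in both sums, contributing identical terms to the two minima; the only moves that differ are Right's moves in $X$, producing $H+X^R$ versus $G+X^R$, and there the induction hypothesis gives $\LS(H+X^R)\ge \LS(G+X^R)$ (note $b(X^R)<b(X)$). Taking minima over Right's options then yields $\RS(H+X)\ge \RS(G+X)$; this step is essentially the inequality version of Lemma~\ref{lem:rightscores}(1).

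For the Left-stop inequality the hypothesis $|\GL|\ge 1$ is essential: it guarantees that neither $G$ nor $H$ is left-atomic, hence neither $G+X$ nor $H+X$ is left-atomic, so both Left-stops are computed as a maximum of $\RS$ over Left's options via Definition~\ref{def:stops} (this is exactly what fails in the counterexample $G=\langle\emptyset^1\mid 2\rangle$ preceding the lemma). I would take a Left move in $G+X$ witnessing $\LS(G+X)$ and match it in $H+X$. If the witnessing move is into a common option $G^L\in\GL$, then $G^L$ is also a Left option of $H$, so $\RS(G^L+X)$ appears in the maximum defining $\LS(H+X)$ and we are done. If instead it is a move in $X$ to $G+X^L$, then the corresponding move in $H+X$ reaches $H+X^L$, and the induction hypothesis for the Right-stop at $X^L$ (again $b(X^L)<b(X)$) gives $\RS(H+X^L)\ge \RS(G+X^L)=\LS(G+X)$, so $\LS(H+X)\ge \LS(G+X)$; the extra option $A$ only enlarges the maximum, never shrinking it.

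The base case $b(X)=0$, where $X$ is a number, is immediate: Right's only moves lie inside $G$/$H$ and coincide, giving equal Right-stops, while $X$ offers Left no move, so $\LS(H+X)$ is the same maximum as $\LS(G+X)$ with the extra term $\RS(A+X)$ adjoined. The main obstacle I anticipate is not any single calculation but the bookkeeping of the coupled induction --- ensuring that the Left-stop argument invokes the Right-stop claim only at strictly smaller birthday and vice versa --- together with the careful use of $|\GL|\ge 1$ to stay out of the left-atomic branch of Definition~\ref{def:stops}.
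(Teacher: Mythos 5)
Your proof is correct and is essentially the paper's argument made explicit: the paper disposes of this lemma in one line (``the proof is clear since Left never has to use the new option''), and your coupled induction on $b(X)$ --- Right's options in the two sums coincide, while the extra option $A$ only enlarges the maximum defining the Left-stop, with $|\GL|\ge 1$ keeping both sums out of the left-atomic branch --- is precisely the formalization of that remark. The only caveat, that $\langle \GL\cup A\mid \GR\rangle$ must itself be guaranteed when $G$ is right-atomic, is handled by you at least as carefully as by the paper, which defers it to the blanket ``easy to check'' remark at the start of the section.
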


\begin{proof} The proof is clear since Left never has to use the new option.
\end{proof}
\subsection{Reductions}\label{sec:3reductions}

We first consider the most straightforward reduction, that of removing dominated options. For this to be possible we require at least two left options.

\begin{theorem}\label{thm:domination} (Domination)
Let $G\in{\GS}$ and suppose $A, B\in \GL$ with $A\pr B$. Let $H=\langle\GL\setminus\{A\}\mid \GR\rangle$.
Then $H\in \GS$ and $G \sim H$.
\end{theorem}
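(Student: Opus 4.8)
The plan is to verify $H\in\GS$ at once, dispose of one of the two inequalities with the Monotone Principle, and reduce the equivalence to a single induction for the remaining inequality.

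\textbf{Easy parts.} Since every follower of $H$ is a follower of $G$ and $G$ is guaranteed, $H$ is guaranteed; moreover $H$ is still non-left-atomic because $B\in\HL$, so $H$ is a well-formed game of $\GS$. For the inequality $G\su H$, note that $H=\langle\GL\setminus\{A\}\mid\GR\rangle$ has a left option (namely $B$), so the Monotone Principle (Lemma~\ref{lem:greediness}) applies to $H$ and yields $\langle(\GL\setminus\{A\})\cup\{A\}\mid\GR\rangle\su H$, i.e.\ $G\su H$. Hence for every $X$ we already have $\LS(G+X)\ge\LS(H+X)$ and $\RS(G+X)\ge\RS(H+X)$, and it remains to prove the reverse inequalities; these will upgrade both to equalities and give $G\sim H$.

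\textbf{The main induction.} I would prove, by induction on $b(X)$, that $\LS(H+X)\ge\LS(G+X)$ and $\RS(H+X)\ge\RS(G+X)$ for all $X\in\GS$, carrying both stops simultaneously (as flagged after Theorem~\ref{thm:LRscores}, the birthday induction forces this). For the Left-stop: since $A,B\in\GL$, the game $G$ is not left-atomic, so $\LS(G+X)=\RS((G+X)^L)$ for some optimal Left option. If that option lies in the $X$-component, say it is $G+X^L$, then the move $H+X^L$ is available from $H+X$ and the induction hypothesis for $\RS$ at the smaller birthday $b(X^L)$ gives $\LS(H+X)\ge\RS(H+X^L)\ge\RS(G+X^L)=\LS(G+X)$. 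If the optimal option is $G^L+X$ with $G^L\ne A$, then $G^L\in\HL$ and the same move is available from $H+X$, so $\LS(H+X)\ge\RS(G^L+X)=\LS(G+X)$.

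\textbf{The crux.} The remaining subcase, in which the optimal Left option of $G+X$ is exactly the dominated option $A+X$, is the heart of the argument: here $\LS(G+X)=\RS(A+X)$, and I would invoke $A\pr B$ (equivalently $B\su A$, so $\RS(B+X)\ge\RS(A+X)$ by Definition~\ref{def:equivalence}) together with $B\in\HL$ to conclude $\LS(H+X)\ge\RS(B+X)\ge\RS(A+X)=\LS(G+X)$. This is the only place where the domination hypothesis is actually used, and getting the quantifiers right (the inequality $\RS(B+X)\ge\RS(A+X)$ holds for this very $X$ because $B\su A$ holds in every sum) is the point to be careful about. Finally, for the Right-stop I would apply Lemma~\ref{lem:rightscores}(1) to $F=H$ and $K=G$, which share the right-option set $\GR$ (and share the right-atom when that set is empty): its hypothesis $\LS(H+X^R)=\LS(G+X^R)$ for all $X^R$ holds at the smaller birthdays $b(X^R)$ by combining the induction hypothesis ($\ge$) with the already-established $G\su H$ ($\le$), and its conclusion is $\RS(H+X)=\RS(G+X)$. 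This closes the induction, and together with the easy parts yields $G\sim H$.
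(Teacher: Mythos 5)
Your proposal is correct and follows essentially the same route as the paper's proof: the Monotone Principle disposes of $G\su H$, and the reverse inequality is an induction on $b(X)$ whose crux is the case $\LS(G+X)=\RS(A+X)$, settled by $\LS(H+X)\ge\RS(B+X)\ge\RS(A+X)$, with Lemma~\ref{lem:rightscores}(1) handling the Right-stops. The only cosmetic difference is that you unfold the Left-stop case analysis by hand where the paper cites Lemma~\ref{lem:rightscores}(2), and you are in fact slightly more careful than the paper in noting that the equality hypothesis of Lemma~\ref{lem:rightscores}(1) is obtained by combining the induction hypothesis with the already-known $G\su H$.
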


\begin{proof}
\noindent
Note that $H\in \GS$, because $H$ is not atomic (at least $B$ is a left option) and $G\in \GS$. By the monotone principle, Lemma
\ref{lem:greediness}, $G\su H$. Therefore we only have to prove that $H\su G$.
For this, we need to show that $ \LS(H+X)\geqslant \LS(G+X)$ and $\RS(H+X)\geqslant \RS(G+X)$ for all $X$. We will
proceed by induction on the birthday of $X$. Fix $X\in \GS$.

By induction,  for each $X^R\in\XR$, we know that
$\LS(H+X^R)\ge \LS(G+X^R)$. Thus from Lemma~\ref{lem:rightscores}(1), 
it follows that $\RS(H+X)\ge \RS(G+X)$.

Now consider the Left-stops. By induction,  for each $X^L\in\XL$, we know that
$\RS(H+X^L)\ge \RS(G+X^L)$, that is the first condition of Lemma
\ref{lem:rightscores}(2) is satisfied.  By assumption, the only non-common
 option is $A\in G\setminus H$. Therefore,  by Lemma~\ref{lem:rightscores}(2), it suffices to study the case  
 $\LS(G+X) = \RS(A+X)$. Since $A\pr B$, we get $\LS(H+X)\ge \RS(B+X)\ge\RS(A+X) = \LS(G+X)$. Hence $H\su G$, and so $H\sim G$.
\end{proof}

We remind the reader that while we only define the following concepts from Left's perspective,
the corresponding Right concepts are defined analogously.

\begin{definition}
For a game $G$, suppose there are followers $A\in \GL$ and
$B\in A^\mathcal R$ with $B\pr G$.
 Then the Left option $A$ is \textit{reversible}, and sometimes, to be specific, $A$
 is said to be \textit{reversible  through} its right option $B$. In addition, $B$ is called a \textit{reversing} option for $A$ and,
 if $B^\mathcal{L}$ is non-empty then $B^\mathcal{L}$
  is a \textit{replacement set} for $A$. In this case, 
 $A$ is said to be \textit{non-atomic-reversible}.
 If the reversing option is left-atomic, that is, if $B^\mathcal{L}=\emptyset^\ell$, then $A$ is
 said to be \textit{atomic-reversible}.
 \end{definition}

If Left were to play a reversible option then Right has a move that retains or improves his situation. 
Indeed,
it is the basis for the second reduction. In Normal-play games,
\textit{bypassing a reversible option} is to replace a reversible option by
its replacement set, even if the replacement set
is empty. This results in a simpler game equal to the original.
 In $\GS$, there are more cases to consider. 
 We begin by showing that, if the replacement set is non-empty, 
 then bypassing a reversible option does result in a new but equal game. 
In Theorem~\ref{thm:atomic}, we then treat the case of an atomic-reversible option.

\begin{theorem}[Reversibility 1]\label{thm:nonatomic} 
Let $G\in{\GS}$ and suppose that $A$ is a left option of $G$ reversible through $B$.
If  $B^\mathcal{L}$ is non-empty, then $G\sim\left\langle \GL\setminus\{A\},B^\mathcal{L} \mid {\GR}\right\rangle$.
\end{theorem}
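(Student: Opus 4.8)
The plan is to write $H=\langle\GL\setminus\{A\},\,B^\mathcal{L}\mid\GR\rangle$ and to prove $G\su H$ and $H\su G$ together, by a single induction on $b(X)$, in the same style as the proof of Theorem~\ref{thm:domination}. First I would record that $H\in\GS$: since $B^\mathcal{L}\neq\emptyset$ the game $H$ has a left option, every option of $H$ is a follower of $G$ (the members of $B^\mathcal{L}$ are followers of $B\in A^\mathcal{R}$, hence of $G$), and every atomic follower of $H$ is an atomic follower of $G$; thus the guaranteed conditions are inherited. Two structural facts drive everything: (i) $G$ and $H$ have the same right options $\GR$, so by Lemma~\ref{lem:rightscores}(1) each Right-stop comparison reduces to the corresponding Left-stop comparison of the $X^R$-sums, available by induction; and (ii) the reversibility data $B\pr G$ together with $B^\mathcal{L}\subseteq\HL$.

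For a fixed $X$ I would, exactly as in Domination, settle the Right-stops first and then the Left-stops. Both Right-stop inequalities follow at once from Lemma~\ref{lem:rightscores}(1) fed by the induction hypothesis on $\LS(\,\cdot+X^R)$. For the Left-stop inequality $\LS(G+X)\ge\LS(H+X)$ I argue by cases on how Left realizes $\LS(H+X)$: if in the $X$-component, the matching move in $G+X$ works using the induction hypothesis on the Right-stops; if through an option in $\GL\setminus\{A\}$, the identical move in $G+X$ works trivially; and the one new case is $\LS(H+X)=\RS(B^L+X)$ for some $B^L\in B^\mathcal{L}$, where $\RS(B^L+X)\le\LS(B+X)$ (as $B^L$ is a left option of $B$) and $\LS(B+X)\le\LS(G+X)$ (as $B\pr G$), giving $\LS(H+X)\le\LS(G+X)$.

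The reverse Left-stop inequality $\LS(H+X)\ge\LS(G+X)$ is the heart of the matter. The cases in which Left realizes $\LS(G+X)$ in $X$ or through an option common to $G$ and $H$ are again immediate; the crux is $\LS(G+X)=\RS(A+X)$, and the obstacle is to let $H$ recover this value without having $A$ available. Here I would run a squeeze: since $B\in A^\mathcal{R}$ we have $\RS(A+X)\le\LS(B+X)$, while $B\pr G$ gives $\LS(B+X)\le\LS(G+X)=\RS(A+X)$, forcing $\LS(B+X)=\LS(G+X)$. Thus Right's reversing reply is among his best replies to $A$, and $H$ can imitate Left's best answer inside $B$: if Left realizes $\LS(B+X)$ through some $B^L\in B^\mathcal{L}$ then $\LS(H+X)\ge\RS(B^L+X)=\LS(B+X)=\LS(G+X)$ because $B^\mathcal{L}\subseteq\HL$; if instead Left realizes it in the $X$-component, then $\LS(B+X)=\RS(B+X^L)$ and the chain $\LS(H+X)\ge\RS(H+X^L)\ge\RS(G+X^L)\ge\RS(B+X^L)$---using the induction hypothesis and $B\pr G$---again gives $\LS(H+X)\ge\LS(G+X)$.

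I expect this squeeze to be the delicate point, since it is exactly what makes the non-empty replacement set $B^\mathcal{L}$ do the work of the deleted option $A$, and it is where the scoring setting departs from Normal-play: one must track equalities of stop \emph{values}, not merely outcomes. The hypothesis $B^\mathcal{L}\neq\emptyset$ is indispensable, because the imitation of Left's reply inside $B$ breaks down when $B$ has no left options; that degenerate situation is precisely the atomic-reversible case deferred to Theorem~\ref{thm:atomic}.
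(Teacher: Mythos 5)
Your proof is correct and follows essentially the same route as the paper's: the same induction on $b(X)$, Right-stops dispatched via Lemma~\ref{lem:rightscores}(1), the new option $C\in B^\mathcal{L}$ handled by $\RS(C+X)\le\LS(B+X)\le\LS(G+X)$, and the crux case $\LS(G+X)=\RS(A+X)$ resolved by showing $\LS(H+X)\ge\LS(B+X)$. The only difference is organizational: the paper isolates this last step as a separate claim ($H\su B$, proved by the same two sub-cases you give inline), while your extra observation that the inequalities squeeze to $\LS(B+X)=\LS(G+X)$ is true but not needed.
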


\begin{proof}

Consider $G, A, B$ as in the statement of the theorem, and recall that, since
$B$ is a reversing right option, $G\su B$. Moreover, there is a replacement set
$B^\mathcal{L}$, so we let
$H = \left\langle \GL\setminus\{A\},B^\mathcal{L} \mid {\GR}\right\rangle$.
 We need to prove that $H\sim G$, i.e., $\LS(G+X)=\LS(H+X)$ and
 $\RS(G+X)=\RS(H+X)$ for all $X$. We proceed by induction on the birthday of $X$.

 Fix $X$.
 Note that $B^\mathcal{L}$, $\GL$ and $\HL$ are non-empty so that $B+X$, $G+X$
 and $H+X$ all have Left options. Moreover $A+X$ has Right options.

 For the Right-stops: by induction we have that $\LS(G+X^R)=\LS(H+X^R)$ for any 
 $X^R\in\XR$. Thus by Lemma~\ref{lem:rightscores}(1), we have $\RS(G+X)=\RS(H+X)$.

 For the Left-stops, and within the induction, we first prove a necessary inequality.\\
 
\noindent \textit{Claim~1:}  $H\su B$.\\

\noindent \textit{Proof of Claim~1:}
 For the Left-stops:  if $C\in B^\mathcal{L}$ then $C\in\HL$ and thus
 $\LS(H+X)\ge\RS(C+X)$. If $\LS(B+X)=\RS(C+X)$ for some $C\in B^\mathcal{L}$
  then it follows that
  $\LS(H+X)\ge\LS(B+X)$. Otherwise, $\LS(B+X)=\RS(B+X^L)$. By induction,
  $\RS(B+X^L)\le\RS(H+X^L)$ and since $\RS(H+X^L)\le\LS(H+X)$, we get $\LS(H+X)\ge \LS(B+X)$.

For the Right-stops: by the argument before the claim,  $\Rs(H+X)= \Rs(G+X)$.
Since $G\su B$ then $\RS(G+X)\ge \Rs(B+X)$ and thus $\RS(H+X)\ge \Rs(B+X)$.
 This concludes the proof of Claim~1.\\

By induction we have that $\RS(G+X^L)=\RS(H+X^L)$ for any 
 $X^L\in\XL$, which gives the first assumptions of Lemma~\ref{lem:rightscores}(2). 
 It remains to consider the cases where the
  second assumption does not hold.

First, we consider $\LS(G+X)$.
By Lemma~\ref{lem:rightscores}(2), the remaining case to consider is $\LS(G+X) = \RS(A+X)$.
Since $B\in A^\mathcal{R}$, we have
$\RS(A+X)\le \LS(B+X)$. By Claim~1, we know that
$\LS(H+X)\ge \LS(B+X)$. By combining these inequalities we obtain $\Ls(G+X)\le \Ls(H+X)$.

Secondly, we consider $\LS(H+X)$. The only possibly non-common option is $C\in B^\mathcal{L}$,  with 
$C\in \HL\setminus \GL$, and where we, by  Lemma~\ref{lem:rightscores}(2), may assume that $\Ls(H+X)=\Rs(C+X)$. 
Moreover, $G\su B$, and thus $\Ls(H+X)=\Rs(C+X)\le \Ls(B+X)\le \Ls(G+X)$.
\end{proof}

For the next reduction theorem, there is no replacement set, because the reversing option
is left-atomic.  We first prove a strategic fact about atomic reversible
options---nobody wants to play to one!

\begin{lemma}[Weak Avoidance Property]\label{lem:WeakAvoidanceProperty}
Let $G\in{\GS}$ and let $A$ be an atomic-reversible Left option of $G$.
For any game $X$,
if $\XL\ne \emptyset$ then there
is an $X^L$ such that $\RS(A+X)\leqslant \RS(G+X^L)$.
\end{lemma}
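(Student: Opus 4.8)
The plan is to prove the Weak Avoidance Property directly as a short chain of inequalities, with no induction required. The three facts that drive the argument are: since $A$ is reversible through $B$, the game $B$ is a right option of $A$; since $B$ is a reversing option, $G\su B$; and since $A$ is \emph{atomic}-reversible, $B$ is left-atomic, say $B^\mathcal{L}=\emptyset^\ell$. The hypothesis $\XL\neq\emptyset$ will be used precisely to guarantee that Left can still move after reaching the $B$-component.

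First I would record the starting inequality. Because $B\in A^\mathcal{R}$, playing in the $A$-component gives $B+X$ as a right option of $A+X$; in particular $A+X$ is not right-atomic, so by Observation~\ref{obs:sets}(ii),
\[
\RS(A+X)\le \LS(B+X).
\]
Next I would analyze $\LS(B+X)$. Since $\XL\neq\emptyset$, the sum $B+X$ is not left-atomic, so its Left-stop is a maximum over Left options. The one delicate bookkeeping point is that, because $B$ is left-atomic, the disjunctive-sum convention removes the would-be family $B^\mathcal{L}+X$, so every Left option of $B+X$ has the form $B+X^L$ with $X^L\in\XL$. Hence $\LS(B+X)=\max\{\RS(B+X^L):X^L\in\XL\}$, attained at some $X^{L'}\in\XL$.

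Finally I would invoke $G\su B$. By the definition of the order relation (Definition~\ref{def:equivalence}) applied with the added game $X^{L'}$, this yields $\RS(G+X^{L'})\ge \RS(B+X^{L'})$. Chaining the three steps gives
\[
\RS(A+X)\le \LS(B+X)=\RS(B+X^{L'})\le \RS(G+X^{L'}),
\]
and taking $X^L=X^{L'}\in\XL$ completes the proof.

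The main point to handle with care is not really an obstacle but the disjunctive-sum bookkeeping in the middle step: one must verify that $B$ left-atomic together with $\XL\neq\emptyset$ forces the Left options of $B+X$ to come solely from the $X$-component. This is exactly where the atomic-reversibility hypothesis is essential. For a non-atomic reversing option there would be an additional family $B^\mathcal{L}+X$ of Left options with no guaranteed link back to the options $G+X^L$, which is why the statement is restricted to the atomic case (and why the general reversible case was treated separately, via its replacement set, in Theorem~\ref{thm:nonatomic}).
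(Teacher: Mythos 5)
Your proposal is correct and follows essentially the same argument as the paper's proof: the chain $\RS(A+X)\le \LS(B+X)=\RS(B+X^{L})\le \RS(G+X^{L})$, using that $B\in A^\mathcal{R}$ forces $A+X$ to be non-right-atomic, that $B$ left-atomic together with $\XL\ne\emptyset$ forces the Left-stop of $B+X$ to be attained at some $B+X^L$, and that $G\su B$ transfers the Right-stop inequality. The extra bookkeeping you spell out (the removal of the family $B^\mathcal{L}+X$ in the disjunctive sum) is implicit in the paper's version, so no gap remains.
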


\begin{proof}
Let $A$ be an atomic-reversible Left option of $G$ and let $B\in A^\mathcal{R}$ be
a reversing option for $A$. Assume that $X$ has a left option.

 By definition, $G\su B$
and $B=\left\langle \emptyset^\ell\mid B^\mathcal{R}\right\rangle$.
 Since $B$ is a right option of $A$ then
 $A+X\ne \langle (A+X)^\mathcal{L}\mid \emptyset^r\rangle$. Consequently,
\begin{eqnarray*}
\RS(A+X)&\leqslant& \LS(B+X)  \\
&=&   \RS(B+X^L),
        \mbox{ for some $X^L$,}\\
&\leqslant&
  \RS(G+X^L), \mbox{ since $G\su B$}.
\end{eqnarray*}
\end{proof}

The next reduction is about replacing a left atomic-reversible option $A$ in a game $G$.
There are two cases. If Left has a `good' move other than $A$ then $A$ can be eliminated.
Otherwise, we can only simplify $A$.

    \begin{theorem}[Atomic Reversibility]\label{thm:atomic} 
Let $G\in{\GS}$ and suppose that $A\in\GL$ is reversible through $B=\langle\emptyset^\ell\mid  B^\mathcal{R}\rangle$.
\begin{enumerate}
  \item If $\LSu(G)=\RSu(G^L)$ for some $G^L\in G^{\cal L}\setminus\{A\}$, then
  $G\sim \left\langle \GL\setminus \{A\}\mid {\GR}\right\rangle$;
  \item If $\LSu(G)=\RSu(A)>\RSu(G^L) $  for all $G^L\in \GL\setminus \{ A\}$, then

   $G\sim \left\langle \left\langle\emptyset^\ell\mid  B\right\rangle,\GL\setminus\{A\}\mid {\GR}\right\rangle$.
\end{enumerate}
\end{theorem}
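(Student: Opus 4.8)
The plan is to prove both parts as equivalences $G\sim H$, where $H$ is the game on the right of the respective claim, by establishing $\LS(G+X)=\LS(H+X)$ and $\RS(G+X)=\RS(H+X)$ for every $X\in\GS$, by induction on $b(X)$, in the style of Theorems~\ref{thm:domination} and~\ref{thm:nonatomic}. First I would note $H\in\GS$ (routine, as for the other reductions). Since $G$ and $H$ have the same right options, the Right-stop equality $\RS(G+X)=\RS(H+X)$ is delivered by Lemma~\ref{lem:rightscores}(1) once the inductive hypothesis supplies $\LS(G+X^R)=\LS(H+X^R)$. For the Left-stops I would use Lemma~\ref{lem:rightscores}(2) to reduce each inequality to the case where the relevant Left-stop is attained at a \emph{non-common} option---namely $A$ (for $G$) and, in Part~2, $A'=\langle\emptyset^\ell\mid B\rangle$ (for $H$). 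The recurring ingredients are the reversibility data $G\su B$ and $B\in A^\mathcal R$, together with the pass-allowed facts $\LSu(G)\ge\LSu(B)=\ell$ and $\RSu(A')=\ell$; the latter holds because $A'$ is left-atomic with least atom $\ell$, exactly as in the proof of Corollary~\ref{cor:emptywaiting}.

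For Part~1, with $H=\langle\GL\setminus\{A\}\mid\GR\rangle$, the inequality $G\su H$ is immediate from the Monotone Principle, Lemma~\ref{lem:greediness}, since $\HL\ne\emptyset$. For $H\su G$ the reduction leaves only the case $\LS(G+X)=\RS(A+X)$. If $\XL\ne\emptyset$, the Weak Avoidance Property, Lemma~\ref{lem:WeakAvoidanceProperty}, supplies $X^L$ with $\RS(A+X)\le\RS(G+X^L)$, and then $\RS(G+X^L)=\RS(H+X^L)\le\LS(H+X)$ by induction and Observation~\ref{obs:sets}(i). If instead $X=\langle\emptyset^x\mid\XR\rangle$ is left-atomic, I would combine $\RS(A+X)\le\LS(B+X)=\ell+x$ (from $B\in A^\mathcal R$ and $B$ left-atomic) with $\LS(H+X)\ge\LSu(H)+x$ (Corollary~\ref{cor:emptywaiting}); here the hypothesis is used to get $\LSu(H)\ge\ell$, since some $G^{L}\ne A$ satisfies $\LSu(G)=\RSu(G^L)$, whence $\LSu(H)\ge\RSu(G^L)=\LSu(G)\ge\ell$.

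For Part~2, with $H=\langle A',\GL\setminus\{A\}\mid\GR\rangle$, I would prove the two Left-stop inequalities separately. The inequality $\LS(G+X)\ge\LS(H+X)$ reduces to the case $\LS(H+X)=\RS(A'+X)$, and there $\RS(A'+X)\le\LS(B+X)\le\LS(G+X)$ using $B\in (A')^{\mathcal R}$ and $G\su B$. The reverse inequality $\LS(H+X)\ge\LS(G+X)$ reduces to $\LS(G+X)=\RS(A+X)$ and is handled exactly as in Part~1, except that now $\LSu(H)\ge\RSu(A')=\ell$ holds because $A'\in\HL$ (rather than via a surviving option $G^L$). Assembling the Left- and Right-stop equalities yields $G\sim H$ in both parts.

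The main obstacle, and the place where the genuinely new machinery enters, is the left-atomic case $\XL=\emptyset$: there the Weak Avoidance Property is vacuous, and one is forced to argue through the pass-allowed stops, controlling $\LS(H+X)$ from below by $\LSu(H)+x$ and $\RS(A+X)$ from above by $\ell+x$. It is precisely here that the hypotheses matter: they guarantee $\LSu(H)=\LSu(G)$, i.e.\ that the reduction preserves the pass-allowed Left-stop. Indeed $B\in A^\mathcal R$ forces $\RSu(A)\le\ell$, so the Part~2 hypothesis $\LSu(G)=\RSu(A)$ together with $\LSu(G)\ge\ell$ pins down $\ell=\LSu(G)=\RSu(A)$, which is what makes the left-atomic replacement $A'$ an exact substitute for $A$. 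I would expect the careful bookkeeping of this single case, rather than any conceptual difficulty, to be the delicate part of the write-up.
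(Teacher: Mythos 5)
Your proposal is correct and follows essentially the same route as the paper's own proof: induction on $b(X)$, Lemma~\ref{lem:rightscores} to isolate the non-common options ($A$ in $G$, and $A'=\langle\emptyset^\ell\mid B\rangle$ in $H$ for Part~2), the Weak Avoidance Property (Lemma~\ref{lem:WeakAvoidanceProperty}) when $\XL\ne\emptyset$, and pass-allowed stop bounds ($\LSu(G)\ge\ell$ together with Corollary~\ref{cor:emptywaiting}) in the left-atomic case. The only deviations are cosmetic and harmless: you use the inequality $\LSu(H)\ge\RSu(C)$ where the paper asserts the equality $\LSu(H)=\RSu(C)$, and in Part~2 you obtain $\LS(H+X)\ge\ell+x$ via $\RSu(A')=\ell$ and Corollary~\ref{cor:emptywaiting} rather than the paper's direct remark that every score reachable after Left moves to $A'+X$ is at least $\ell+x$.
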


\begin{proof} Let  $A\in \GL$ and $B\in A^\mathcal R$  be as in the
statement of the theorem, with $G\su B$. First an observation:\\

\noindent \textit{Claim 1:} $\LSu(G)\geqslant \ell$.\\

\noindent Let $n$ be the birthday of $G$ and since $B$ is a proper follower of $G$, the birthday of $B$ is less than $n$.
Since $G\su B$, from  Lemma~\ref{lem:waiting moves} we have
\[\LS(G-\Num{n})\geqslant \LS(B-\Num{n}) = \LS(\left\langle \emptyset^\ell\mid B^\mathcal{R}\right\rangle-\Num{n})=\ell, \]
where $n$ is the birthday of $G$. This proves the claim.\\

The proof of the equality in both parts will proceed by induction on the birthday of $X$.
Again, in both parts, let $H$ be the game that  we wish to show is equal to $G$.
We have, by induction,
that $\LS(G+X^R) = \LS(H+X^R)$, and by $\GR=\HR$, from Lemma~\ref{lem:rightscores}(1),
it then follows that $\RS(G+X) = \RS(H+X)$.\\

It remains to show that $\LS(G+X)=\LS(H+X)$ in both parts.\\

\noindent \textit{Part 1.}
The assumption is that there exists $C\in \GL\setminus \{A\}$ with $\LSu(G)=\RSu(C)$.
Let $H=\left\langle \GL\setminus \{A\}\mid {\GR}\right\rangle$. Note that both $G+X$ and $H+X$ have left options
since $C$ is in both $\GL$ and $\HL$.
From Lemma~\ref{lem:greediness} we have $G\su H$,  and thus it remains to show that
$\LS(H+X)\ge \LS(G+X)$.

By Lemma~\ref{lem:rightscores}(2), we need only consider the case
 $\LS(G+X) = \RS(A+X)$. Note that $X$ must be left-atomic; else, by
 Lemma~\ref{lem:WeakAvoidanceProperty}, there would exist $X^L\in\XL$ with
  $\RS(A+X) \le \RS(G+X^L)$. Therefore, we may assume that
  $X=\langle \emptyset^x\!\mid \!X^\mathcal R\rangle$.
  In this case, since $C\ne A$ is the best pass-allowed Left move in $G$ then
   this is also true for
   $H$. We now have the string of inequalities,
$$\LS(H+X)\ge\LSu(H+X)=\LSu(H)+x=\RSu(C)+x = \LSu(G)+x\ge \ell+x,$$
where the first inequalities are from Corollary~\ref{cor:emptywaiting},
 and the last inequality is by Claim~1.
 Since $B$ is a right option of $A$, we also have that
  $$\LS(G+X)=\RS(A+X)\le \LS(B+X)= \ell+x.$$
Thus $\LS(G+X)\le \LS(H+X)$ and this completes the proof of part~1 of the theorem.\\

\noindent \textit{Part 2.}
In this case, the Right's-pass-allowed Left-stop of $G$ is obtained
only through $A$. 
Let $H=\left\langle \left\langle\emptyset^\ell\mid  B\right\rangle,\GL\setminus\{A\}
\mid {\GR}\right\rangle$.
 Recall that it only remains to show that $\LS(G+X)=\LS(H+X)$, and that, by Lemma~\ref{lem:rightscores}, 
 we only need to consider the non-common options in the respective games.

First, suppose   $\LS(H+X) = \RS(\langle\emptyset^\ell\mid  B\rangle+X)$.
  Since $G\su B$ and $B$ is a right option of $\langle\emptyset^\ell\mid  B\rangle$, 
  we have the inequalities
$$\Ls(H+X)=\Rs(\langle \emptyset^\ell \mid B\rangle+X)\le \Ls(B+X)\le \Ls(G+X).$$
Thus, by Lemma~\ref{lem:rightscores}(2), $\LS(H+X)\le\LS(G+X)$.\\

Secondly, suppose that $\LS(G+X) = \RS(A+X)$.
Note that if $X$ has a left option then, by Lemma~\ref{lem:WeakAvoidanceProperty},
there exists some $X^L\in\XL$ such that
$\LS(G+X)=\RS(G+X^L)$. By induction, then $\RS(G+X^L)=\RS(H+X^L)\le \LS(H+X)$.
Therefore, we may assume that $X=\langle \emptyset^\ell \mid \XR \rangle $.
Since $B$ is a right option of $A$, the only Left option in $G$, 
we have the string of inequalities
$$\Ls(G+X)=\Rs(A+X)\le \Ls(B+X)=\ell+x.$$
 To show that $\Ls(H+X)\ge \ell +x$, we note that it suffices for Left to move in the
  $H$ component to $\langle \ell \mid B\rangle \in \HL$,
  since all  scores in $B=\langle \ell \mid B^\mathcal R\rangle$ are at least $\ell$.
  Thus, by Lemma~\ref{lem:rightscores}(2), we now have $\LS(G+X)\le \LS(H+X)$. 
  
  From this, together with the conclusion of the previous paragraph, we have
  $\LS(G+X)=\LS(H+X)$.
\end{proof}

Suppose that $G\in \GS$ has an atomic-reversible option, $A\in \GL$, with the reversing option $B=\langle\emptyset^\ell\mid B^\mathcal{R}\rangle$.
Given the reduction in Theorem~\ref{thm:atomic}(2), a remaining problem of atomic reducibility is to find a simplest substitution for $B$. In Section~\ref{sec:uniqueness}, we will show that the following result solves this problem.

\begin{theorem}[Substitution Theorem]\label{thm:substitute}
Let $A$ be an atomic-reversible Left option of $G\in \GS$ and let $B=\langle\emptyset^\ell\mid  B^\mathcal{R}\rangle$ be a reversing Right option of $A$. 
Suppose also that
$\LSu(G)=\RSu(A)>\RSu(G^L) $  for all $G^L\in \GL\setminus \{ A\}$.
\begin{itemize}
\item[1.] There exists a smallest nonnegative integer $n$ such that $G\su \ell -\widehat {n}$
and $G \sim \langle \ell - (\widehat {n+1}), \GL\setminus\{A\} \mid \GR \rangle $.
\item[2.] If $A$ is the only Left option of $G$ and
$\langle \emptyset^\ell\mid G^\mathcal{R}\rangle\in \GS$,
 then $G\sim \langle \emptyset^\ell\mid G^\mathcal{R}\rangle$.
\end{itemize}
\end{theorem}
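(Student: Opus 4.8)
The plan is to prove both parts by comparing, for every $X\in\GS$ and by induction on $b(X)$, the stops of $G$ with those of the target game, and to push the delicate cases down to \emph{left-atomic} $X$ using the Weak Avoidance Property. Throughout I work inside the hypotheses of Theorem~\ref{thm:atomic}(2), so I may use $G\su B$ with $B=\langle\emptyset^\ell\mid B^\mathcal R\rangle$, and, exactly as in Claim~1 of that proof, $\LSu(G)\ge\ell$ (since $\LS(B-\widehat m)=\ell$ for $m\ge b(B)$). The reversing relation $B\in A^\mathcal R$ will supply all the upper bounds $\RS(A+X)\le\LS(B+X)$, and Corollary~\ref{cor:emptywaiting} together with $\LSu(\cdot)\ge\ell$ will supply the matching lower bounds.

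First I establish the least $n$ in Part~1. Let $S=\{n\in\mathbb N_0:\ G\su\ell-\widehat n\}$; I claim $S$ is upward closed and nonempty, so it has a least element. Upward-closedness is transitivity applied to $\ell-\widehat{n+1}\pr\ell-\widehat n$, and this inequality holds because the waiting moves are the images of the integers under the order-embedding of Theorem~\ref{thm:order} and $-(n+1)<-n$. For nonemptiness I invoke the Projection Theorem: since $B\in\GS$ is guaranteed and left-atomic with left-atom $\ell$, Definition~\ref{def:guaranteed}(2) forces every atom of $B$ to be at least $\ell$, so $\min(B)=\ell$; hence $B\su\min(B)-\widehat{b(B)}=\ell-\widehat{b(B)}$ by Theorem~\ref{thm:projection}(2), and $G\su B$ gives $b(B)\in S$. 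Set $n=\min S$.

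Next, for Part~1 put $H=\langle\ell-\widehat{n+1},\GL\setminus\{A\}\mid\GR\rangle$ and prove $G\sim H$. The Right-stops are immediate: $G$ and $H$ share the right options $\GR$, so the inductive hypothesis $\LS(G+X^R)=\LS(H+X^R)$ and Lemma~\ref{lem:rightscores}(1) give $\RS(G+X)=\RS(H+X)$. For the Left-stops the only non-common options are $A\in\GL$ and $\ell-\widehat{n+1}\in\HL$, so by Lemma~\ref{lem:rightscores}(2) it suffices to treat these two. For $\LS(H+X)\le\LS(G+X)$, in the case $\LS(H+X)=\RS((\ell-\widehat{n+1})+X)$, I use that $\ell-\widehat n$ is a right option of $\ell-\widehat{n+1}=\langle\emptyset^\ell\mid\ell-\widehat n\rangle$ together with $n\in S$, giving $\RS((\ell-\widehat{n+1})+X)\le\LS((\ell-\widehat n)+X)\le\LS(G+X)$. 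For $\LS(G+X)\le\LS(H+X)$, in the case $\LS(G+X)=\RS(A+X)$, the Weak Avoidance Property (Lemma~\ref{lem:WeakAvoidanceProperty}) disposes of the subcase $\XL\ne\emptyset$ (some $X^L$ with $\RS(A+X)\le\RS(G+X^L)=\RS(H+X^L)\le\LS(H+X)$), so I may assume $X=\langle\emptyset^x\mid\XR\rangle$. Then $\RS(A+X)\le\LS(B+X)=\ell+x$, while Corollary~\ref{cor:emptywaiting} with $\LSu(H)\ge\ell$ (Left moves to $\ell-\widehat{n+1}$, against which Right's waiting moves never beat the score $\ell$) yields $\LS(H+X)\ge\LSu(H)+x\ge\ell+x$. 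Hence $\LS(G+X)=\RS(A+X)\le\ell+x\le\LS(H+X)$, finishing Part~1.

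Finally, Part~2 is the special case $\GL=\{A\}$, now replacing $A$ by the atom $\emptyset^\ell$; the extra hypothesis $\langle\emptyset^\ell\mid\GR\rangle\in\GS$ is precisely what keeps the target in $\GS$ (making $G$ left-atomic of score $\ell$ could otherwise violate Definition~\ref{def:guaranteed}(2)). With $H=\langle\emptyset^\ell\mid\GR\rangle$ the Right-stops agree by Lemma~\ref{lem:rightscores}(1) and induction. For the Left-stops: when $\XL\ne\emptyset$, Weak Avoidance shows the move to $A$ is never strictly best, so $\LS(G+X)=\max_{X^L}\RS(G+X^L)=\LS(H+X)$ by induction; when $X=\langle\emptyset^x\mid\XR\rangle$ is left-atomic, $H+X$ is left-atomic of score $\ell+x$ while $\LS(G+X)=\RS(A+X)$ is squeezed between $\LS(B+X)=\ell+x$ above and $\LSu(G)+x\ge\ell+x$ below, the latter from Corollary~\ref{cor:emptywaiting}. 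I expect the main obstacle to be exactly this left-atomic case: pinning the Left-stop to the single value $\ell+x$ requires the reversing option $B$ for the upper bound and the pass-allowed bound $\LSu(\cdot)\ge\ell$ for the lower bound simultaneously; the other step needing care is extracting nonemptiness of $S$ from the Projection Theorem via the identity $\min(B)=\ell$.
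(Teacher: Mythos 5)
Your proof is correct, but it takes a genuinely different route from the paper's, most visibly in Part~1. The paper does not run a fresh induction there: it adjoins $K=\ell-\widehat{n+1}$ as an \emph{extra} Left option to form $G'=\langle K,\GL\mid\GR\rangle$, checks (via a strengthened Claim~1, $\LSu(G)=\ell=\RSu(A)=\RSu(K)$) that $A$ and $K$ are two atomic-reversible Left options of $G'$ both realizing $\LSu(G')$, and then applies Theorem~\ref{thm:atomic}(1) twice---once to delete $K$ (giving $G'\sim G$) and once to delete $A$ (giving $G'\sim H$). You instead redo the stop-comparison induction in the style of the proofs of Theorems~\ref{thm:domination}--\ref{thm:atomic}: Lemma~\ref{lem:rightscores} to isolate the non-common options $A$ and $K$, Weak Avoidance plus the squeeze $\RS(A+X)\le\LS(B+X)=\ell+x\le\LSu(H)+x\le\LS(H+X)$ for left-atomic $X$, and $G\su\ell-\widehat{n}$ together with $K^{R}=\ell-\widehat{n}$ for the reverse direction; all of these steps check out. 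The trade-off: the paper's argument is shorter and reuses established machinery, while yours is more uniform with the other reduction proofs and, as a by-product, never invokes the hypothesis $\LSu(G)=\RSu(A)>\RSu(G^L)$ at all in Part~1 (it only needs $\LSu(G)\ge\ell$, not equality), so in effect you prove Part~1 for an arbitrary atomic-reversible option. In Part~2 the difference is smaller but real: the paper invokes Part~1 (replacing $G$ by $\langle\ell-\widehat{n+1}\mid\GR\rangle$) and splits the left-atomic case into purely-atomic $X$ and $X$ with Right options, whereas your single squeeze via Corollary~\ref{cor:emptywaiting} and $\LSu(G)\ge\ell$ handles all left-atomic $X$ at once and makes Part~2 independent of Part~1. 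The existence of the least $n$ is argued identically in both proofs (Projection Theorem applied to $B$, using $\min(B)=\ell$).
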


\begin{proof} 
{\bf Case 1:} Let $m=b(B)$. By assumption $G\su B$ and, by Theorem~\ref{thm:projection}(2), $B\su \ell-\widehat{m}$, and thus $G\su\ell -\widehat{m}$.
 Since $m$ is a nonnegative integer, the existence part is clear.
 Let $n$ be the minimum nonnegative integer such that $G\su \ell - \widehat {n}$.

Let $K=\ell - (\widehat {n+1})$, which upon expanding is $\langle\emptyset^\ell \! \mid \! \ell - \widehat {n}\rangle$, let
$H= \langle K, \GL\setminus\{A\} \mid \GR \rangle $, and let 
$G' = \langle K, \GL \mid \GR \rangle $.
By Lemma~\ref{lem:greediness} and the definition of $n$, we have
$G'\su G\su \ell - \widehat{n}$. Hence $\ell - \widehat{n}$ is a reversing game
in both $G$ and $G'$, and both $A$ and $K$ are atomic-reversible Left options in $G'$.

Since $G$ satisfies part~2 of Theorem~\ref{thm:atomic}. Then Claim~1 in Theorem~\ref{thm:atomic} can be strengthened.\\

\noindent\textit{Claim 1:}  $\LSu(G)=\ell$.\\

\noindent\textit{Proof of Claim 1:} This is true  because $\LSu(G)=\RSu(A)\leqslant\LSu(B)=\ell$.\\

Hence, $\ell = \Lsu(G) = \Rsu(A)$.
 We also have that $\Rsu(K)= \ell $. It is now easy to see that
$\Lsu(G')=\ell.$ Thus we have two atomic-reversible Left options in $G'$,
and so we can apply part~1 in Theorem~\ref{thm:atomic}. We get that $G'\sim G$ since $K$ is an  atomic-reversible Left option in $G'$. Moreover, $G' \sim H$, since $A$ is also atomic-reversible. This finishes the proof of Case 1.\\

\noindent {\bf Case 2:} This is the case where $\GL = \{A\}$. We put $H=\langle \emptyset^\ell\mid  G^\mathcal{R}\rangle\in \GS$. To prove $G\sim H$ we proceed by induction on the birthday of the distinguishing game $X$. 

 From Lemma~\ref{lem:rightscores}(1) and induction, we have that $\RS(G+X)=\RS(H+X)$, for any $X\in \GS$.

For the Left-stops, from Case 1, we know that 
$G\sim\langle \ell -(\Num{n+1}) \mid \GR \rangle $.
Therefore, in the case $X=\langle \emptyset^x \mid \emptyset^y\rangle$ it is easy to see that $\Ls(H+X)=\ell+x \le \Ls(G+X)$, since $y\ge x$. 
Moreover, we also have $\Ls(G+X)=\Rs(A+X)\le\Ls(B+X)=\ell+x$, which thus proves equality.

If $\XL= \emptyset^x$ and $\XR\ne \emptyset$, then, $\LS(G+X) =\Rs(\ell - (\Num{n+1}) +X)$
 and it is clear that Right can obtain the score $\ell+x$ by playing to $\ell -\Num{n}+X$.
 Since both games are left-atomic and in $\GS$, then $\Rs(\ell - (\Num{n+1}) +X)\ge \ell+x$,
 so in fact, equality holds. Hence, in this case, we get $\Ls(G+X)=\ell+x=\Ls(H+X)$.

If $\XL\ne \emptyset$,  then by Lemma~\ref{lem:WeakAvoidanceProperty} (weak avoidance),
there is some $X^L$ such that $\RS(A+X) \leqslant \RS(G+X^L)$.
Therefore, $\LS(G+X) = \max\{\RS(G+X^L) : X^L\in \XL\}$.
Also, $\LS(H+X) = \max\{\RS(H+X^L) : X^L\in \XL\}$ since there is no
Left move in $H$. By induction, $\RS(H+X^L) = \RS(G+X^L)$ and
consequently, $\LS(G+X)=\LS(H+X)$.
\end{proof}
In summary, there are four types of reductions for Left:
\begin{enumerate}
  \item Erase dominated options;
  \item Reverse non-atomic-reversible options;
  \item Replace atomic-reversible options by $\ell-\widehat{\,n\!+\!1\,}$;
  \item If possible, when an atomic-reversible is the only left option,
   substitute $\emptyset^\ell$ for $\ell-\widehat{\,n\!+\!1\,}$. 
\end{enumerate}
Here $\ell$ is a real number and $n\ge 0$ is an integer (as given in Theorem~\ref{thm:substitute}) providing a number of waiting moves for Right. We have the following definition.

\begin{definition}\label{def:reducedform}
A game  $G\in\GS$ is said to be \textit{reduced} if none of Theorems~\ref{thm:domination},
\ref{thm:nonatomic},~\ref{thm:atomic}, or~\ref{thm:substitute} can be applied to
$G$ to obtain an equivalent game with different sets of options.
\end{definition}
\subsection{Constructive Game Comparison}\label{sec:gamecomparison}

We wish to prove that, for a given guaranteed scoring game, there is one unique reduced game representing the full congruence class, a canonical form. 
To this purpose, in this subsection, we first develop another major tool (also to be used in Section~\ref{sec:conjugates}) of constructive game comparison. 
 The existence of a canonical form is far from obvious, as the order of reduction can vary. In Normal-play, the proof of uniqueness uses the
 fact that if $G\sim H$ then $G-H\sim 0$. However, in (guaranteed) scoring play, 
 $G\sim H$ does \emph{not} imply $G+\ConjS{H}\sim 0$.
 We use an idea, `linked', adapted from Siegel\cite{Siege2015}, which only uses the partial order. 
 To fully adapt it for guaranteed games, we require a generalization of Theorem~\ref{thm:ettinger}
 (which in its turn is a generalization of Ettinger's\cite{Ettin1996} theorem for dicot games).

Recall that  $\Conj{G}=\langle \ConjS{\GR} \mid \ConjS{\GL} \rangle$, where the conjugate is applied to the respective options, and
if, for example, $\GR =\atom{r}$, then $\ConjS{\GR}=\atom{-r}$. 

\begin{definition}\label{def:adjoint}
Let $G\in \GS$ and let $m(G) = \max\{|t|: \emptyset^t \mbox{ is an atom in $G$}\}$.
Let $r, s$ be two nonnegative real numbers. 
The \emph{$(r,s)$-adjoint} of $G$ (or just \emph{adjoint}) is $G^\circ_{r,s}= \Conj{G}+\langle \emptyset^{-m(G)-r-1}\mid \emptyset^{m(G)+s+1}\rangle $.
\end{definition}

Since $-m(G)-r-1\le m(G)+s+1$, it follows that $G^\circ_{r,s}\in\GB$.

\begin{theorem}\label{thm:adjoint}
Given $G\in\mathbb{GS}$ and two nonnegative real numbers $r,s$  then

\noindent $\LS(G+G^\circ_{r,s})<-r$ and $\RS(G+G^\circ_{r,s})>s$.
\end{theorem}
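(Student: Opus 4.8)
The plan is to prove the first inequality $\LS(G+G^\circ_{r,s})<-r$ by induction on the birthday of $G$, with $r,s\ge 0$ arbitrary throughout, and then to deduce the second from it by a conjugate-symmetry argument. Write $M=m(G)$ and $P=\pura{-M-r-1}{M+s+1}$, so that $G^\circ_{r,s}=\Conj{G}+P$ and the position under study is $G+\Conj{G}+P$. The game $P$ is purely atomic, so neither player ever moves in it; it simply contributes $-M-r-1$ to the final score when Left is stuck and $M+s+1$ when Right is stuck. The governing idea is the familiar Tweedledum--Tweedledee mirroring between $G$ and $\Conj{G}$, with $P$ supplying a guaranteed slack.

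\textbf{Base case.} When $G=\pura{a}{b}$ is purely atomic (so $a\le b$ and $M=\max\{|a|,|b|\}$), the sum $G+\Conj{G}+P$ is again purely atomic, namely $\pura{a-b-M-r-1}{b-a+M+s+1}$. Hence $\LS(G+\Conj{G}+P)=a-b-M-r-1\le -M-r-1<-r$, using $a-b\le 0$; this already displays the strictness, which comes from the $-1$ and the $M$-term.

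\textbf{Inductive step.} Suppose $G$ is not purely atomic, so Left has a move in $G+\Conj{G}+P$. Since $\LS(G+\Conj{G}+P)$ is the maximum of $\RS$ over Left's options and this set is finite, it suffices to show $\RS(O)<-r$ for every Left option $O$. A Left move is either (A) $O=G^L+\Conj{G}+P$ for some $G^L\in\GL$, or (B) $O=G+\Conj{G^R}+P$ for some $G^R\in\GR$, recalling that the left options of $\Conj{G}$ are the $\Conj{G^R}$. In case (A) Right answers by the mirroring move to $\Conj{G^L}$ in the untouched $\Conj{G}$-component, reaching $G^L+\Conj{G^L}+P$; in case (B) Right answers to $G^R$ in the $G$-component, reaching $G^R+\Conj{G^R}+P$. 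The crux is that these mirrored positions are themselves adjoint sums: since every atom of $G^L$ is an atom of $G$ we have $m(G^L)\le M$, so putting $r'=M-m(G^L)+r\ge 0$ and $s'=M-m(G^L)+s\ge 0$ gives $P=\pura{-m(G^L)-r'-1}{m(G^L)+s'+1}$, i.e.\ $G^L+\Conj{G^L}+P=G^L+(G^L)^\circ_{r',s'}$. By the induction hypothesis applied to $G^L$ with parameters $r',s'$, $\LS(G^L+\Conj{G^L}+P)<-r'\le -r$. Then Observation~\ref{obs:sets}(ii) gives $\RS(O)\le\LS(G^L+\Conj{G^L}+P)<-r$ in case (A), and the identical computation with $G^R$ settles case (B). Taking the maximum over Left's options yields $\LS(G+\Conj{G}+P)<-r$.

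\textbf{The Right-stop and the main obstacle.} For the second inequality I would run the symmetric strategic argument, now with Left mirroring Right and using Observation~\ref{obs:sets}(i); equivalently, one may invoke the routine conjugation identity $\RS(W)=-\LS(\Conj{W})$ together with the short computation $\Conj{G+G^\circ_{r,s}}=G+G^\circ_{s,r}$ (obtained from $\Conj{P}=\pura{-M-s-1}{M+r+1}$), giving $\RS(G+G^\circ_{r,s})=-\LS(G+G^\circ_{s,r})>s$ by the first inequality with $r,s$ interchanged. I expect the main obstacle to be the bookkeeping of the parameter shift $r\mapsto r'=M-m(G^L)+r$: one must simultaneously verify that $r',s'$ remain nonnegative, so that the induction hypothesis applies, and that $-r'\le -r$, so that the inductive bound is strong enough. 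Both facts hinge on the monotonicity $m(G^L)\le m(G)$ of the maximal atom magnitude along followers, which is precisely why the adjoint is defined using $m(G)$ rather than a local quantity.
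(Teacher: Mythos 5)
Your proof is correct and rests on exactly the same idea as the paper's: the second player mirrors every move in the $G+\Conj{G}$ component, while the purely atomic summand (your $P$), in which nobody can move, supplies the slack of $m(G)+r+1$ (resp.\ $m(G)+s+1$) that forces the strict bound. The paper simply states this mirroring strategy and reads off the score bound in two lines, whereas your induction on birthday---with the parameter shift $(r,s)\mapsto(r',s')$ that lets the fixed atom $P$ be re-read as the adjoint atom of the smaller follower---is a rigorous formalization of that same argument, and it closes correctly.
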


\begin{proof}
In the game $G+\Conj{G}+\langle \emptyset^{-m(G)-r-1}\mid \emptyset^{m(G)+s+1}\rangle$,
 the second player can mirror each move in the $G +\Conj{G}$ component,
and there are no other moves since the remaining component is purely-atomic.
Therefore, $$\LS(G + G^\circ_{r,s}) = \LS(G+\Conj{G})-m(G)-r-1 \le m(G)-m(G)-r-1<-r.$$
The bound for the Right-stop is obtained similarly.
\end{proof}
\begin{observation}\label{obs:adjoint}
If $r=s=0$ in Definition~\ref{def:adjoint}, then Theorem~\ref{thm:adjoint} corresponds to the particular case where $\LS(G+G^\circ_{0,0})<0$ and $\RS(G+G^\circ_{0,0})>0$. This will suffice in the below proof of Lemma~\ref{lem:linked}. Thus we will use the somewhat simpler notation $G^\circ$ for the $(0,0)$-adjoint of $G$.
\end{observation}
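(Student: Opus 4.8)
The plan is to obtain Observation~\ref{obs:adjoint} as an immediate specialization of Theorem~\ref{thm:adjoint} to the admissible choice $r=s=0$. Since $0$ is a nonnegative real number, the hypotheses of Theorem~\ref{thm:adjoint} hold with this choice, so the theorem applies verbatim. Recalling Definition~\ref{def:adjoint}, the $(0,0)$-adjoint is
\[
G^\circ_{0,0}=\Conj{G}+\langle \emptyset^{-m(G)-1}\mid \emptyset^{m(G)+1}\rangle,
\]
which we abbreviate by $G^\circ$; here $m(G)=\max\{|t|:\emptyset^t \text{ is an atom in } G\}$. As already noted after Definition~\ref{def:adjoint}, this is a legitimate element of $\GS$.

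First I would substitute $r=0$ into the first conclusion of Theorem~\ref{thm:adjoint}, which yields $\LS(G+G^\circ)=\LS(G+G^\circ_{0,0})<-0=0$. Then I would substitute $s=0$ into the second conclusion, which yields $\RS(G+G^\circ)=\RS(G+G^\circ_{0,0})>0$. These are exactly the two strict inequalities asserted, so the mathematical content of the Observation follows at once. The remaining text---that the $(0,0)$-case will suffice in the proof of Lemma~\ref{lem:linked}, and the abbreviation $G^\circ$ for the $(0,0)$-adjoint---is a forward-looking comment to be verified when Lemma~\ref{lem:linked} is established, and a purely notational convention, respectively; neither requires proof here.

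Because the statement is a direct specialization, there is no genuine obstacle. The only point deserving a moment's care is that the strict bounds $<-r$ and $>s$ of Theorem~\ref{thm:adjoint} collapse to the symmetric pair $<0$ and $>0$ precisely at $r=s=0$, and that this boundary choice of parameters is still permitted by the theorem's hypothesis that $r$ and $s$ be nonnegative.
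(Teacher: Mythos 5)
Your proposal is correct and matches the paper's treatment: the Observation carries no separate proof precisely because it is, as you say, the immediate specialization $r=s=0$ of Theorem~\ref{thm:adjoint} (with $0$ admissible as a nonnegative real), plus a notational convention. Nothing further is needed.
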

\begin{definition}
Let $G, H\in\mathbb{GS}$. We say that \emph{$H$ is linked to $G$ (by $T$)} if there exists some $T\in\mathbb{GS}$ such that $\Ls(H+T) < 0 < \Rs(G+T)$.
\end{definition}
Note that, if $H$ is linked to $G$, it is not necessarily true that $G$ is linked to $H$.
\begin{lemma}\label{dl1}
Let $G, H\in\mathbb{GS}$. If $H\su G$ then $H$ is linked to no $G^L$ and no $H^R$ is linked to $G$.
\end{lemma}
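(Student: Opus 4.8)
The plan is to prove both statements by the same elementary mechanism: chain the defining inequalities of $H\su G$ together with the stop inequalities of Observation~\ref{obs:sets}, and then specialise the universally quantified distinguishing game to the hypothetical linking witness $T$ to reach a contradiction. No induction and no adjoints are needed here; Theorem~\ref{thm:adjoint} and the $(r,s)$-adjoint will only be relevant for the converse direction established later.

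First I would record two auxiliary facts, both valid for every $X\in\GS$. If $\GL\ne\emptyset$, then for each $G^L\in\GL$ the position $G^L+X$ is a Left option of $G+X$; this is read off directly from the relevant cases of Definition~\ref{def:disjunctive}, since $\GL$ non-empty forces $\GL+X$ to appear in $(G+X)^{\mathcal L}$. Hence Observation~\ref{obs:sets}(i) gives $\LS(G+X)\ge\RS(G^L+X)$. Dually, for each $H^R\in\HR$ the position $H^R+X$ is a Right option of $H+X$, so Observation~\ref{obs:sets}(ii) gives $\RS(H+X)\le\LS(H^R+X)$.

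For the first claim, suppose toward a contradiction that $H$ is linked to some $G^L$, say by $T$, so that $\LS(H+T)<0<\RS(G^L+T)$. Since $H\su G$ we have $\LS(H+T)\ge\LS(G+T)$, and the auxiliary fact gives $\LS(G+T)\ge\RS(G^L+T)$; combining these yields $\LS(H+T)\ge\RS(G^L+T)$. This contradicts the linking condition, which forces $\LS(H+T)<0<\RS(G^L+T)$ and hence $\LS(H+T)<\RS(G^L+T)$. For the second claim, suppose some $H^R$ is linked to $G$ by $T$, so $\LS(H^R+T)<0<\RS(G+T)$. Now $H\su G$ gives $\RS(H+T)\ge\RS(G+T)$, and the dual auxiliary fact gives $\LS(H^R+T)\ge\RS(H+T)$; combining yields $\LS(H^R+T)\ge\RS(G+T)$, again contradicting $\LS(H^R+T)<0<\RS(G+T)$.

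The only point needing care --- the closest thing to an obstacle --- is ensuring that the options invoked genuinely exist in the disjunctive sum, that is, that $G^L+X$ and $H^R+X$ really are options of $G+X$ and $H+X$ respectively. This is guaranteed because we quantify over \emph{existing} options $G^L\in\GL$ and $H^R\in\HR$, so the relevant option set is non-empty, and closure of $\GS$ under $+$ (Theorem~\ref{thm:monoidstructure}) ensures these sums lie in $\GS$; when $\GL$ or $\HR$ is empty the corresponding assertion is vacuously true.
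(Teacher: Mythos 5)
Your proof is correct and is essentially the paper's own argument: both rest on chaining the inequalities $\LS(H+T)\ge\LS(G+T)\ge\RS(G^L+T)$ (and dually $\RS(H+T)\le\LS(H^R+T)$ together with $\RS(G+T)\le\RS(H+T)$) from the definition of $\su$ and Observation~\ref{obs:sets}, the only cosmetic difference being that you phrase it as a contradiction with a hypothetical linking witness $T$ while the paper states the same chain directly.
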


\begin{proof}
Consider $T\in\GS$ such that $\LS(H+T)<0$. Because $H\su G$,
we have $\LS(H+T)\ge \LS(G+T)$.
Therefore, $0>\LS(H+T)\ge \LS(G+T)\ge \RS(G^L+T)$, for any $G^L$.
Analogously, consider  $T\in\GS$ such that $0<\RS(G+T)$, we have $0<\RS(G+T)\le \RS(H+T)\le \LS(H^R+T),$ for any $H^R$.
\end{proof}

\begin{lemma}\label{dl2}
Let $G,H\in\mathbb{GS}$. Suppose that $G\not\succcurlyeq H$.
\begin{enumerate}
  \item There exists $X\in \GS$ such that $\LS(G+X)<0<\LS(H+X)$
  \item There exists $Y\in \GS$ such that $\RS(G+Y)<0<\RS(H+Y)$.
\end{enumerate}
\end{lemma}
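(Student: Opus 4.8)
The plan is to leverage Theorem~\ref{thm:LRscores}, which links a failure of the Left-stop inequality to a failure of the Right-stop inequality, together with the translation property of stops in Observation~\ref{obs:sets}(iii). First I would unwind the hypothesis. By Definition~\ref{def:equivalence}, the relation $G\su H$ fails exactly when there is some $X\in\GS$ with $\LS(G+X)<\LS(H+X)$, or some $Y\in\GS$ with $\RS(G+Y)<\RS(H+Y)$. The crucial point is that these two possibilities are equivalent: taking the contrapositive of Theorem~\ref{thm:LRscores}, there exists $X$ with $\LS(G+X)<\LS(H+X)$ if and only if there exists $Y$ with $\RS(G+Y)<\RS(H+Y)$. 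Hence from $G\not\su H$ I can conclude that \emph{both} witnesses exist. This is really the only nontrivial step, and the reason Theorem~\ref{thm:LRscores} is invoked here: on its own the hypothesis $G\not\su H$ guarantees only one of the two failures, whereas the lemma asks for conclusions about both the Left- and the Right-stops.

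Next I would promote each strict inequality to a ``straddling $0$'' statement by a number translation. For part~1, take $X_0$ witnessing $a := \LS(G+X_0) < \LS(H+X_0) =: b$, and for a real number $s$ set $X = X_0 + s$, which lies in $\GS$ by Theorem~\ref{thm:monoidstructure}. By Observation~\ref{obs:sets}(iii) and associativity of disjunctive sum, $\LS(G+X) = a + s$ and $\LS(H+X) = b + s$. Since $a<b$, the interval $(-b,-a)$ is nonempty, so choosing any $s$ in it---for instance $s = -(a+b)/2$, giving $a+s=(a-b)/2<0$ and $b+s=(b-a)/2>0$---yields $\LS(G+X)<0<\LS(H+X)$, as required.

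Part~2 is handled identically on the Right side: starting from the witness $Y_0$ with $\RS(G+Y_0)<\RS(H+Y_0)$ (which exists by the equivalence established above), I shift by a suitable number and apply the Right-stop analogue of Observation~\ref{obs:sets}(iii), namely $\RS(G+s)=\RS(G)+s$, which follows from the Number Translation observation. This gives some $Y$ with $\RS(G+Y)<0<\RS(H+Y)$. I do not expect a genuine obstacle in the translation steps; the only things to verify carefully are that the shifting number $s$ is taken in the correct open interval and that the translated games $X_0+s$ and $Y_0+s$ remain in $\GS$, both of which are immediate.
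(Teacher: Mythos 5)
Your proposal is correct and follows essentially the same route as the paper's proof: both obtain the two witnesses from the equivalence supplied by Theorem~\ref{thm:LRscores} (the paper cites the claim inside its proof, you take the contrapositive of the theorem statement, which is logically the same), and both then translate the witness by the number $-(\alpha+\beta)/2$ so the stops straddle $0$, using Observation~\ref{obs:sets}(iii). No gaps; the closure of $\GS$ under adding a number and the strictness of the straddle are handled exactly as in the paper.
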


\begin{proof}
By assumption, there exists $X$ such that $\LS(G+X) < \LS(H+X)$ \emph{or} there exists $Y$ such that $\RS(G+Y)<\RS(H+Y)$.
By Theorem~\ref{thm:LRscores} (the claim in its proof), we have that
$$\exists X:\LS(G+X)<\LS(H+X) \Leftrightarrow \exists Y:\RS(G+Y)<\Rs(H+Y).$$
Suppose that there exists $Z$ such that $\alpha = \Ls(G+Z)<\Ls(H+Z)=\beta$.
Let $X=Z - (\alpha+\beta)/2$. Then
$\Ls(G+X) = \Ls(G+Z) - (\alpha+\beta)/2=(\alpha-\beta)/2<0$ and
 $0<(\beta-\alpha)/2=\Ls(H+Z)- (\alpha+\beta)/2=\Ls(H+X)$.
 Hence the first part holds. The proof of the other part is analogous.
\end{proof}

\begin{lemma}\label{lem:linked}
Let $G,H\in\mathbb{GS}$.
Then $G$ is linked to $H$ if and only if no $G^L\su H$ and
no $H^R\pr G$.
\end{lemma}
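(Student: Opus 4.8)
The plan is to prove the two implications separately. The forward implication is short and is essentially the content of Lemma~\ref{dl1}; the reverse implication is the substantial one, and is where I would build an explicit witness $T$, manufacturing its options from Lemma~\ref{dl2}. For the forward direction, suppose $G$ is linked to $H$, say $\LS(G+T)<0<\RS(H+T)$ for some $T\in\GS$. For any $G^L\in\GL$, Observation~\ref{obs:sets}(i) gives $\RS(G^L+T)\le\LS(G+T)<0<\RS(H+T)$, so $\RS(G^L+T)<\RS(H+T)$ and hence $G^L\not\su H$. Symmetrically, for any $H^R\in\HR$, Observation~\ref{obs:sets}(ii) gives $\LS(H^R+T)\ge\RS(H+T)>0>\LS(G+T)$, so $\LS(G+T)<\LS(H^R+T)$ and hence $G\not\su H^R$, i.e. $H^R\not\pr G$. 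This settles $\Rightarrow$.

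For the reverse direction, assume no $G^L\su H$ and no $H^R\pr G$, and seek a single $T$ with $\LS(G+T)<0<\RS(H+T)$. First I would translate this goal into local requirements: since a move in a sum is a move in one component, and using Definition~\ref{def:stops}, it suffices to produce a $T$ with (R1) $\RS(G^L+T)<0$ for all $G^L$, (R2) $\RS(G+T^L)<0$ for all $T^L\in T^{\mathcal L}$, (R3) $\LS(H^R+T)>0$ for all $H^R$, and (R4) $\LS(H+T^R)>0$ for all $T^R\in T^{\mathcal R}$. The guiding idea is to let the options of $T$ be distinguishing games that are \emph{matched} to the opponent's possible moves: Right will answer a Left move $G\to G^L$ by a matching move in $T$, and Left will answer a Right move $H\to H^R$ by a matching move in $T$.

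The witnesses come from Lemma~\ref{dl2}. For each $G^L$ (where $G^L\not\su H$), part~(1) yields $X_{G^L}\in\GS$ with $\LS(G^L+X_{G^L})<0<\LS(H+X_{G^L})$; for each $H^R$ (where $G\not\su H^R$), part~(2) yields $Y_{H^R}\in\GS$ with $\RS(G+Y_{H^R})<0<\RS(H^R+Y_{H^R})$. I then set
\[ T=\langle\, \{\,Y_{H^R}:H^R\in\HR\,\}\ \mid\ \{\,X_{G^L}:G^L\in\GL\,\}\,\rangle , \]
which lies in $\GS$ since all its options do. With this choice (R2) and (R4) are immediate from the witness inequalities $\RS(G+Y_{H^R})<0$ and $\LS(H+X_{G^L})>0$, while (R1) and (R3) follow from the matching strategy: in $G^L+T$ Right moves $T\to X_{G^L}$, so $\RS(G^L+T)\le\LS(G^L+X_{G^L})<0$, and in $H^R+T$ Left moves $T\to Y_{H^R}$, so $\LS(H^R+T)\ge\RS(H^R+Y_{H^R})>0$. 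Assembling, every Left option of $G+T$ has negative Right-stop and every Right option of $H+T$ has positive Left-stop, whence $\LS(G+T)<0<\RS(H+T)$, as required.

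Two points need care. When $\GL$ or $\HR$ is empty the corresponding side of $T$ is empty, so $T$ degenerates to a left- or right-atomic game (or to a single atom when both are empty); here I would fill the empty side with an atom whose score is chosen extreme enough to keep $T$ guaranteed, and the verification simplifies because the matching component of $G+T$ or $H+T$ then has no options, leaving only one of (R1)--(R4) active. The main obstacle, and the reason a naive construction fails, is that one $T$ must simultaneously make $G+T$ a Right win and $H+T$ a Left win: each ``response'' option of $T$ must both win the line it is designed for \emph{and} still lose when the opponent plays it prematurely in the other component. Because $\LS$ and $\RS$ are not comparable in general in $\GS$, the obvious candidates, the adjoints $(G^L)^\circ$ and $(H^R)^\circ$ of Observation~\ref{obs:adjoint}, control only the stops of $G^L+(G^L)^\circ$ and $H^R+(H^R)^\circ$ and say nothing about the cross-terms $\LS(H+(G^L)^\circ)$ and $\RS(G+(H^R)^\circ)$ that (R4) and (R2) demand. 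The key realization is that Lemma~\ref{dl2} hands us, for each option, a single witness satisfying a \emph{pair} of inequalities (against $G$ or $G^L$ and against $H$ or $H^R$) of the same stop type, which is exactly what lets one game serve both as a winning response and as a safe-to-be-played-early option.
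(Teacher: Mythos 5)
Your proof is correct and rests on the same key construction as the paper's: the forward direction is the identical stop computation via Observation~\ref{obs:sets}, and the reverse direction assembles the linking game $T$ from exactly the Lemma~\ref{dl2} witnesses --- $X_{G^L}$ with $\LS(G^L+X_{G^L})<0<\LS(H+X_{G^L})$ as Right options, $Y_{H^R}$ with $\RS(G+Y_{H^R})<0<\RS(H^R+Y_{H^R})$ as Left options. Where you differ is in handling empty option sets. The paper never lets $T$ be atomic: it pads $T^{\mathcal L}$ with the adjoints ${G^{\mathcal R}}^\circ$ of $G$'s \emph{Right} options and $T^{\mathcal R}$ with ${H^{\mathcal L}}^\circ$ (safe extra options, refuted by mirroring in $G$ resp.\ $H$), reserving a single number option $-g-1$ (resp.\ $-h+1$) for the case where both $G$ and $H$ are right-atomic (resp.\ left-atomic); as a result $T$ is guaranteed for free and every stop is computed through options. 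So your parenthetical that adjoints are unhelpful is aimed at the wrong adjoints: $(G^L)^\circ$ and $(H^R)^\circ$ indeed do not work as response moves, but the paper's $(G^R)^\circ$ and $(H^L)^\circ$ serve the padding purpose just described. Your leaner $T$ with atom-filling does work, but one detail is under-specified: in the case where both $\GL=\emptyset^{g}$ and $\HR=\emptyset^{h}$, so that $T=\pura{t_1}{t_2}$, choosing the atoms merely ``extreme enough to keep $T$ guaranteed'' is not sufficient as stated --- $t_1=t_2=0$ gives a guaranteed $T$ yet $\LS(G+T)=g$, which need not be negative. The atoms must be chosen to satisfy the stop inequalities themselves, $t_1<-g$ and $t_2>-h$ (with $t_1\le t_2$); your intended ``very negative / very positive'' choice does this, but the justification should say so, since guaranteedness alone does not force it.
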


\begin{proof}
\noindent
($\Rightarrow$): Consider $G$ linked to $H$ by $T$, that is $\LS(G+T)<0<\RS(H+T)$. It follows
\begin{enumerate}
  \item $\RS(G^L+T)\leqslant \LS(G+T)<0<\RS(H+T)$, for any $G^L$
  \item $\LS(G+T)<0<\RS(H+T)\leqslant \LS(H^R+T)$, for any $H^R$.
\end{enumerate}

\noindent
The two items contradict both $G^L\su H$ and $H^R\pr G$.\\

\noindent
($\Leftarrow$): Suppose no $G^L\su H$ and no $H^R\pr G$. Consider
$G^\mathcal{L}=\{G^{L_1},\ldots,G^{L_k}\}$ and
$H^\mathcal{R}=\{H^{R_1},\ldots,H^{R_\ell}\}$, including the case that either or both are atoms. 
By Lemma~\ref{dl2}, for each $i$, $1\le i\le k$,
we can define $X_i$ such that $\LS(G^{L_i}+X_i) < 0 < \LS(H+X_i)$, and,
for each $j$, $1\le j\le \ell$, we can define  $Y_j$ such that $\RS(G+Y_j) < 0 < \RS(H^{R_j}+Y_j)$.
 Let $T=\langle \, T^\mathcal{L}\mid T^\mathcal{R}\,\rangle$ where
 $$T^\mathcal{L}=\left\{\begin{array}{ll}
                         \{-g-1\}, & \text{if} \,\,G = \langle \GL\!\mid \!\emptyset^g\rangle \,\,\text{and also}\,\,H\,\, \text{is right-atomic}; \\
                              {G^\mathcal{R}}^\circ\bigcup\cup_{j=1}^{\ell}\{Y_i\}, & \text{otherwise}.
                                                     \end{array}
\right.$$

$$T^\mathcal{R}=\left\{\begin{array}{ll}
                         \{-h+1\}, & \text{if}\,\,H=\langle \emptyset^h\!\mid \!\HR\rangle\,\,\text{and also}\,\,G\,\, \text{is left-atomic} ;\\
                          {H^\mathcal{L}}^\circ\bigcup\cup_{i=1}^{k}\{X_i\}, & \text{otherwise}.
                                                 \end{array}
\right.$$
Here ${G^\mathcal{R}}^\circ$ denotes the set of $(0,0)$-adjoints of the Right options of $G$, and if there is no Right option of $G$, then it is defined as the empty set. Note that, in this case, if also $\HR$ is empty, then the first line of the definition of $\mathcal T^\mathcal L$ applies, so $\mathcal T^\mathcal L$  (and symmetrically for $\mathcal T^\mathcal R$) is never empty.   
Thus $T\in \GS$, because each option is a guaranteed game. (For example, if both $G=\pura{a}{b}$ and $H=\pura{c}{d}$ are purely-atomic guaranteed games, then $T=\langle -b-1\mid -c+1\rangle$ is trivially guaranteed, because each player has an option to a number. Note also that the scores $a$ and $d$ become irrelevant in this construction.) 

Consider first $G+T$ with $T^\mathcal{L}$ as in the second line of the definition. It follows that
 $\LS(G+T)<0$ because:
\begin{enumerate}
  \item if Left plays to $G^{L_i}+T$, then, because there is a Left option, the second line applies also to $T^\mathcal{R}$. Right answers with $G^{L_i}+X_i$, and $\LS(G^{L_i}+X_i)<0$, by definition of $X_i$;

  \item if Left plays to $G+{G^R}^\circ$, Right answers in $G$ to the corresponding $G^R$
   and $\LS(G^R+{G^R}^\circ)<0$, by Observation~\ref{obs:adjoint};

  \item if Left plays to $G+Y_i$, then by construction, $\RS(G+Y_i)<0$.
\end{enumerate}

Consider next $G+T$ with $T^\mathcal{L}$ in the first line of the definition. We get that $\LS(G+T)<0$ because, either
\begin{enumerate}
  \item $\LS(G+T)=\RS(G+T^L)=\RS(G-g-1)=g-g-1=-1<0$; or
  \item $\LS(G+T)=\RS(G^{L_i}+T)\leqslant\LS(G^{L_i}+X_i)<0$.
\end{enumerate}

\noindent
The last case follows because there are left options in $G$, so the second line
 of the definition of $T^\mathcal{R}$ applies.
 In every case, $\LS(G+T)<0$. The argument for $\RS(H+T)>0$ is analogous. Therefore, $\LS(G+T)<0<\RS(H+T)$ and $G$ is linked to $H$ by $T$.\end{proof}

In the following result we extend Theorem~\ref{thm:ettinger} 
 by using the linked results. From an algorithmic point of view,  when comparing games
 $G$ and $H$, it ultimately removes the need to consider $G+X$ and $H+X$ for all $X$.
 \footnote{This is as close as guaranteed games get to the Normal-play constructive comparison---Left wins playing second in $G-H$ iff $G\su H$. For not-necessarily-guaranteed scoring games, no efficient method for game comparison is known.}

\begin{theorem}[Constructive Comparison] \label{thm:comparison}
Let $G,H\in\mathbb{GS}$. Then, $G\su H$ if and only if
\begin{enumerate}
\item $\underline{Ls}(G)\geqslant \underline{Ls}(H)$ and $\overline{Rs}(G)\geqslant \overline{Rs}(H)$;
  \item For all $H^L\in H^{\mathcal{L}}, \mbox{ either }
  \exists G^L\in G^{\mathcal{L}}\!:G^L\su H^L\,\,or\,\,\exists H^{LR}\in H^{L\mathcal{R}}\!:G\su H^{LR};$
  \item For all $G^R\in G^{\mathcal{R}}, \mbox{ either }\exists H^R\in H^{\mathcal{R}}\!:G^R\su H^R\,\,or\,\,\exists G^{RL}\in G^{R\mathcal{L}}\!:G^{RL}\su H.$
\end{enumerate}
\end{theorem}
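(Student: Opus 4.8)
The plan is to prove the biconditional in two directions, with the forward direction being essentially a direct consequence of the \emph{linked} machinery and the backward direction requiring an inductive strategy-stealing argument. The key conceptual bridge is Lemma~\ref{lem:linked}, which characterizes linkedness purely in terms of the order relation on options: $G$ is linked to $H$ iff no $G^L \su H$ and no $H^R \pr G$. The three numbered conditions in the theorem are designed so that conditions (2) and (3) are exactly the negations of ``$H^L$ is linked to $G$'' and ``$G$ is linked to $H^R$'' respectively, once restated via Lemma~\ref{lem:linked}. So the heart of the argument is to show that $G \su H$ is equivalent to the pass-allowed stop inequalities (condition 1) \emph{together with} the absence of certain linkages between the options.

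For the direction ($\Rightarrow$), assume $G \su H$. Condition~(1) should follow from the definition of $\su$ applied to the distinguishing games $X = -\widehat n$ and $X = +\widehat n$ for suitably large $n$, combined with Lemma~\ref{lem:waiting moves} which identifies $\LSu$ and $\RSo$ with stops against a fixed number of waiting moves; indeed $\LSu(G) = \LS(G - \widehat n) \ge \LS(H - \widehat n) = \LSu(H)$ and symmetrically for $\RSo$. Conditions~(2) and~(3) should follow from Lemma~\ref{dl1}: since $G \su H$, that lemma tells us $H$ is linked to no $G^L$ and no $H^R$ is linked to $G$. Unwinding the relevant instance of Lemma~\ref{lem:linked} for each $H^L$ and each $G^R$ then yields precisely the disjunctions asserted in (2) and (3). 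The main care here is matching the quantifiers correctly: I must verify that ``$H^L$ not linked to $G$'' translates (via the $(\Leftarrow)$ contrapositive of Lemma~\ref{lem:linked} with roles of $G,H$ set to $H^L, G$) into ``$\exists G^L\!: G^L \su H^L$ or $\exists H^{LR}\!: G \su H^{LR}$.''

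For the harder direction ($\Leftarrow$), I would argue by induction on the total birthday of $G$ and $H$, showing $\LS(G+X) \ge \LS(H+X)$ for all $X$ (the Right-stop inequality then follows from Theorem~\ref{thm:LRscores}). Fix $X$ and consider Left's optimal move in $H+X$. If that move is in the $X$ component, to some $X^L$, then one reduces to comparing $G+X^L$ with $H+X^L$ and invokes the induction hypothesis via the inductive form of the theorem; the subtlety is that the inductive statement is about the order relation, not directly about stops, so I expect to need Lemma~\ref{lem:rightscores} to pass between the two. If Left's optimal move is to some $H^L+X$, then condition~(2) supplies either a dominating $G^L$ (so Left mimics in $G$, and induction finishes) or a reversing $H^{LR}$ with $G \su H^{LR}$, in which case Right's response through $H^{LR}$ caps Left's gain. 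The genuinely delicate case is when $X$ is left-atomic, so Left's only recourse is through the waiting-move/pass-allowed machinery; here condition~(1), specifically $\LSu(G) \ge \LSu(H)$, must be combined with Corollary~\ref{cor:emptywaiting} to bound the stops, mirroring the atomic analysis already seen in the proof of Theorem~\ref{thm:atomic}.

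\textbf{The main obstacle} I anticipate is the bookkeeping in the backward direction when the distinguishing game $X$ is atomic and the relevant option is itself reversible, because then neither a straightforward mimicking strategy in $G$ nor a clean reversing response is available, and one must fall back on the pass-allowed stops. This is exactly the scenario where Normal-play intuition breaks down and where the $\LSu/\RSo$ refinements were introduced. I expect to handle it by a case split parallel to the one in Theorem~\ref{thm:atomic}: when $X$ has a Left option, use the inductive comparison directly; when $X$ is left-atomic, reduce to comparing pass-allowed stops and invoke condition~(1) together with Corollary~\ref{cor:emptywaiting}. Verifying that these cases are exhaustive and that the quantifier structure of conditions~(2) and~(3) feeds correctly into each branch is where the real work lies.
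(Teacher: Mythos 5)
Your forward direction is the paper's, but the serious problem is in the backward direction: \textbf{your induction is on the wrong parameter}. You propose induction on the total birthday of $G$ and $H$, yet the only case that genuinely needs an inductive appeal is the one where Left's distinguishing move in $H+X$ lands in the $X$ component, at $H+X^L$; there $G$ and $H$ are unchanged and only $X$ shrinks, so an induction hypothesis indexed by $b(G)+b(H)$ gives you nothing. The paper instead takes a distinguishing game $X$ of \emph{minimal birthday} (equivalently, inducts on $b(X)$), whence $\Rs(G+X^L)\ge \Rs(H+X^L)$ by minimality, and then $\Ls(G+X)\ge \Rs(G+X^L)\ge \Rs(H+X^L)=\Ls(H+X)$, a contradiction. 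Conversely, the places where you \emph{do} invoke induction need none at all: conditions (2) and (3) are hypotheses about the order relation, so $G^L\su H^L$ yields $\Rs(G^L+X)\ge\Rs(H^L+X)$ for every $X$ directly from Definition~\ref{def:equivalence}, and $G\su H^{LR}$ yields $\Ls(G+X)\ge\Ls(H^{LR}+X)\ge\Rs(H^L+X)$ by Observation~\ref{obs:sets}(ii); no ``inductive form of the theorem'' is required, and Lemma~\ref{lem:rightscores} is inapplicable in any case, since it demands that the two games share their set of Right options, which $G$ and $H$ need not. Your atomic case is essentially fine (Corollary~\ref{cor:emptywaiting} plus condition~(1) works, much as the paper uses Theorem~\ref{thm:pallowscores} and the guaranteed property of $X$), except the correct case is ``$H+X$ left-atomic,'' not ``$X$ left-atomic'': if $X$ is left-atomic but $H$ is not, Left still has a move in $H$ and you are back in the previous case.

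There is also a directional error in your forward direction. Linkedness is not symmetric (the paper notes this right after the definition), and Lemma~\ref{dl1} instantiated at $G\su H$ gives that \emph{$G$ is linked to no $H^L$} and \emph{no $G^R$ is linked to $H$} --- you quote the lemma's conclusion without swapping the roles. It is the negation of ``$G$ is linked to $H^L$'' that Lemma~\ref{lem:linked} converts into condition~(2): the failure of ``no $G^L\su H^L$ and no $H^{LR}\pr G$'' is exactly ``$\exists G^L\!: G^L\su H^L$ or $\exists H^{LR}\!: G\su H^{LR}$.'' The translation you flag for verification, starting from ``$H^L$ not linked to $G$,'' would instead produce the useless disjunction ``$\exists H^{LL}\!: H^{LL}\su G$ or $\exists G^R\!: G^R\pr H^L$,'' so that check would fail as written. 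Both defects are repairable --- swap the linkedness roles, and replace your induction by the minimal-birthday choice of $X$ --- but as proposed, the argument does not go through.
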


\begin{proof}
\noindent
($\Rightarrow$)
Suppose that $\underline{Ls}(G)< \underline{Ls}(H)$. Then, for some $n$, $\LS(G-\widehat{n})<\LS(H-\widehat{n})$. 
This, however, contradicts $G\su H$ and so part~1 holds.
  
  Consider $H^L\in H^{\mathcal{L}}$. Because $G\su H$ , by Lemma~\ref{dl1}, 
  $G$ is not linked to $H^L$. Therefore, by Lemma~\ref{lem:linked}, we have 
  $\exists G^L\in G^{\mathcal{L}}:G^L\su H^L\,\,or\,\,\exists H^{LR}\in H^{L\mathcal{R}}:G\su H^{LR}$. 
  The proof of part~3 is similar.
  
($\Leftarrow$)
Assume $1$, $2$ and $3$, and also suppose that $G \not \su H$. By the definition of the partial
order, there is a distinguishing game $X$ such that either $\Ls(G+X)<\Ls(H+X)$ or $\Rs(G+X)<\Rs(H+X)$.
Choose $X$ to be of the smallest birthday such that $\Ls(G+X)<\Ls(H+X)$. There are three cases:

\begin{enumerate}[(a)]

 \item $H+X=\langle \emptyset^h\mid H^{\mathcal{R}}\rangle+\langle \emptyset^x\mid X^{\mathcal{R}}\rangle$.

\noindent In this case, $\Ls(H+X)=h+x$. On the other hand,
$\Ls(G+X)\geqslant \underline{Ls}(G+X)\geqslant \underline{Ls}(G)+\underline{Rs}(X)$
(this last inequality holds by Theorem~\ref{thm:pallowscores}).
Also, $\underline{Ls}(G)+\underline{Rs}(X)\geqslant \underline{Ls}(H)+x$,
because $\underline{Ls}(G)\geqslant \underline{Ls}(H)$ and by $X\in\GS$, Definition~\ref{def:guaranteed}(2).
 Finally, $\underline{Ls}(H)+x=h+x$ because $\underline{Ls}(H)$ is trivially equal to $h$. This contradicts $\Ls(G+X)<\Ls(H+X)$.
 
  \item $\Ls(H+X)=\Rs(H^L+X)$, for some $H^L\in H^{\mathcal{L}}$.

  \noindent In this case, because of part $2$, we have either $G^L\su H^L$ or $G\su H^{LR}$. If the first holds, 
then $\Ls(G+X)\ge \Rs(G^L+X)\ge \Rs(H^L+X)=\Ls(H+X)$. If the second holds, then 
 $\Ls(G+X)\geqslant \Ls(H^{LR}+X)\geqslant \Rs(H^L+X)=\Ls(H+X)$. Both
 contradict the assumption $\Ls(G+X)<\Ls(H+X)$.
 
  \item $\Ls(H+X)=\Rs(H+X^L)$, for some $X^L\in X^{\mathcal{L}}$.

\noindent By the ``smallest birthday'' assumption, $\Rs(G+X^L)\geqslant \Rs(H+X^L)$. Therefore, $\Ls(G+X)\geqslant \Rs(G+X^L)\geqslant \Rs(H+X^L)=\Ls(H+X)$. Once more, we contradict $\Ls(G+X)<\Ls(H+X)$.
 
\end{enumerate}

\noindent
For the Right-stops $\Rs(G+X)<\Rs(H+X)$ the argument is similar.
Hence, we have shown that $G\su H$.
\end{proof}

Note that we can derive the known result, Theorem~\ref{thm:ettinger}, as a simple corollary of Theorem~\ref{thm:comparison}, by letting $H=s$ be a number.
\subsection{Uniqueness of Reduced Forms}\label{sec:uniqueness}

We are now able to prove the existence of a unique reduced form for a congruence class of games. We let $\eqsim$ denote ``identical to'', that is if $G, H \in \GS$, then $G\eqsim H$ if they have identical game tree structure and, given this structure, each atom in $G$ corresponds to an identical atom, in precisely the same position, in the game $H$.

\begin{theorem}\label{thm:uniqred}
Let $G,H\in\mathbb{GS}$. If $G\sim H$ and both are reduced games, then $G\eqsim H$.
\end{theorem}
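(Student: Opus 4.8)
The plan is to prove uniqueness by a simultaneous induction on the birthdays of $G$ and $H$, using the Constructive Comparison theorem (Theorem~\ref{thm:comparison}) as the main engine. Since $G\sim H$ means both $G\su H$ and $H\su G$, I will extract from Theorem~\ref{thm:comparison} the matching conditions in both directions and then argue that, once reductions have been applied, these conditions force a bijection between the option sets that preserves identity. The base case is the purely-atomic one: if $G\eqsim\langle\emptyset^{\ell_1}\mid\emptyset^{r_1}\rangle$, then $\LSu(G)=\ell_1$ and $\RSo(G)=r_1$, so part~1 of the comparison forces $H$ to share these stops; since a reduced atomic game is determined by its stops, $G\eqsim H$.

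For the inductive step I would first handle the Left options (the Right options being symmetric). Take any $H^L\in\HL$. Applying part~2 of Theorem~\ref{thm:comparison} to $G\su H$ gives either some $G^L\su H^L$ or some $H^{LR}\su\cdots$ reversing through $G$; but if $H^L$ reversed through some $H^{LR}$ with $G\su H^{LR}$, then since $G\sim H$ we would get $H\su H^{LR}$, exhibiting $H^L$ as a reversible option of $H$---contradicting that $H$ is reduced (it would be bypassable via Theorem~\ref{thm:nonatomic} or simplifiable via Theorems~\ref{thm:atomic}/\ref{thm:substitute}). So the reversibility alternative is excluded, and there must be $G^L\su H^L$. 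Running the symmetric argument on $H\su G$ produces $H^{L'}\su G^L$. Chaining, $H^{L'}\su G^L\su H^L$; since $H$ has no dominated options (Theorem~\ref{thm:domination}), no $H$-option strictly dominates another, which forces $H^{L'}\eqsim H^L$ and hence $G^L\sim H^L$. By the induction hypothesis applied to the equivalent reduced followers $G^L$ and $H^L$, we conclude $G^L\eqsim H^L$. Thus every $H^L$ equals some $G^L$, and by symmetry every $G^L$ equals some $H^L$; domination-freeness rules out repeats, yielding $\GL=\HL$ as sets of identical trees. The same reasoning on the right gives $\GR=\HR$.

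\emph{The hard part} will be correctly ruling out the reversibility branch of Theorem~\ref{thm:comparison} and showing that surviving it really does mean $G^L\su H^L$ by a genuine option. The subtlety is that a reduced game has no \emph{non-atomic}-reversible options and its atomic-reversible options have been put into the canonical shape $\ell-\widehat{n+1}$ by Theorems~\ref{thm:atomic} and~\ref{thm:substitute}; I must verify that the comparison-theorem ``reversing'' clause, when it fires on a reduced game, contradicts exactly one of these reduction hypotheses. In particular, for the atomic-reversible case one must check that the pass-allowed-stop bookkeeping ($\LSu$ and $\RSu$ conditions in Theorem~\ref{thm:substitute}) pins down the integer $n$ uniquely, so that two reduced representatives cannot differ merely in the number of attached waiting moves. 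I would dispatch this by noting that $\LSu$ and $\RSo$ are invariants of the congruence class (they are determined by $\LS,\RS$ of sums with waiting moves, and equivalence preserves all such stops), so the value of $\ell$ and the minimal $n$ in the reduced atomic-reversible form are the same for $G$ and $H$.

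Finally, I would assemble these pieces: identical Left option sets, identical Right option sets, and---in the atomic cases---identical atoms forced by the stop invariants, give $G\eqsim H$ by the recursive definition of $\eqsim$. The atom-matching in atomic followers is immediate from part~1 of Theorem~\ref{thm:comparison} together with Corollary~\ref{cor:emptywaiting}, which shows that the distinguished atom $\emptyset^h$ of a left-atomic game is recoverable as $\LSu$, so equivalence of reduced atomic games forces identical adornments.
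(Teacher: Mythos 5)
Your overall architecture---induction on birthdays, Theorem~\ref{thm:comparison} applied in both directions, and domination-freeness to collapse the chain $H^{L'}\su G^L\su H^L$---is the same as the paper's, but the step you treat as the pivot of the inductive argument is false, and the error sits exactly where the real work of this theorem lies. You claim that if the reversing branch of Theorem~\ref{thm:comparison} fires (some $H^{LR}$ with $G\su H^{LR}$, hence $H\su H^{LR}$), then $H^L$ is a reversible option of $H$, ``contradicting that $H$ is reduced.'' There is no such contradiction: in $\GS$, reduction does \emph{not} eliminate reversible options. Theorems~\ref{thm:atomic} and~\ref{thm:substitute} replace an atomic-reversible option by $\ell-\Num{n+1}=\langle\emptyset^\ell\mid\ell-\Num{n}\rangle$, and this option is \emph{still} atomic-reversible in the reduced game (it reverses through $\ell-\Num{n}$, since $G\su\ell-\Num{n}$ by the defining property of $n$ in Theorem~\ref{thm:substitute}(1)); likewise, the substitution of Theorem~\ref{thm:substitute}(2) is blocked whenever $\langle\emptyset^\ell\mid\GR\rangle\notin\GS$, in which case the reversible option survives reduction. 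So reduced games can and do carry atomic-reversible options, your contradiction evaporates, and your inductive step fails precisely for them. The same unjustified exclusion is hidden in your ``symmetric argument'' producing $H^{L'}\su G^L$: to rule out the reversing branch there, one must first prove that the option $G^L$ obtained in the first step is not atomic-reversible, which the paper does by a separate argument (invertibility of $\ell-\Num{n}$, Lemma~\ref{lem:dis}, and Theorem~\ref{thm:ettinger}) and which you never supply.

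What is missing is therefore the entire treatment of atomic-reversible options---roughly half of the paper's proof. Your ``hard part'' paragraph names the issue, but the proposed fix ($\LSu$ and $\RSo$ are class invariants, so $\ell$ and $n$ match) covers only the easy half: granted that both $G$ and $H$ have atomic-reversible options in corresponding positions, the stops together with the minimality in Theorem~\ref{thm:substitute}(1) do pin down $\ell$ and $n$. But you must also show (a) that if $H$ has an atomic-reversible Left option then $G$ has one too (the paper gets this from $\LSu(G)=\LSu(H)$ combined with Theorem~\ref{thm:atomic}(2), which makes that option the unique one realizing $\LSu(H)$), and (b) that when $A=\langle\emptyset^h\mid h-\Num{n}\rangle$ is the \emph{only} Left option of $H$, the equivalent reduced $G$ cannot instead be left-atomic, $G=\langle\emptyset^h\mid\GR\rangle$. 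Case (b) cannot be handled by your comparison machinery at all: for a left-atomic $G$ the reversing branch is the only branch that can fire, and it fires legitimately, so no contradiction with reducedness is available. The paper needs a genuinely different argument there: reducedness of $H$ forces an $s$-atom with $s<h$ somewhere in $\HR$ (else Theorem~\ref{thm:substitute}(2) would have applied), and then the pairing of non-atomic-reversible Right options plus the guaranteed property of $G$ produces a contradiction. Without (a) and (b), two equivalent reduced games could a priori still differ in their atomic-reversible options, or in whether one of them has any Left option at all---a possibility your base case also silently ignores, since it assumes both games are atomic together.
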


\begin{proof}

We will proceed by induction on the sum of the birthdays of $G$ and $H$.
We will exhibit a correspondence $G^{L_i}\sim H^{L_i}$ and $G^{R_j}\sim H^{R_j}$
between the options of $G$ and $H$.
By induction, it will follow that $G^{L_i}\eqsim H^{L_i}$, for all $i$,  and $G^{R_j}\eqsim H^{R_j}$, for all $j$, 
and consequently  $G\eqsim H$.\\

\noindent {\it Part 1.}
For the base case, if $G =\pura{a}{b} $ and $H =\pura{c}{d}$ then, 
since $G\sim H$, we must in particular have $a=\LS(G)=\LS(H)=c$ and $b=\RS(G)=\RS(H)=d$. Hence $G\eqsim H$.

Without loss of generality, we may assume that there is a Left option $H^L$. We also assume that if $H^L$ is reversible, then, since $H$ is reduced, it has to be atomic-reversible of the form in Theorem~\ref{thm:substitute}.\\

\noindent {\it Part 2.} Assume that $H^L$ is not atomic-reversible.

Since $G \sim H$, of course, $G\su H$. From Theorem~\ref{thm:comparison},
there exists a $G^L$ with $G^L\su H^L$ or there exists a $H^{LR}\pr G$.
Now $H^{LR}\pr G\sim H$ would contradict that $H^L$ is not reversible.
Thus, there is some $G^L$ with $G^L\su H^L$.

Suppose that $G^L$ is atomic-reversible,  that is,
$G^L \sim \langle\emptyset^\ell \mid \ell-\Num{n}\rangle \sim \ell-\Num{\,n\!+\!1\,}$
for some nonnegative integer $n$ and with $G\su \ell - \Num{n}$.
Since $G\sim H$ we also have $H\su  \ell - \Num{n}$.
(For any real number $s$ and nonnegative integer $m$,
$s-\Num{m}$ is invertible since $s-\Num{m}+(-s+\Num{m}) = 0$.)
Therefore
\[G^L\su H^L\Leftrightarrow \ell - \Num{\,n\!+\!1\,}\su H^L \Leftrightarrow 0\su  H^L - \ell+ \Num{\,n\!+\!1\,},\]
where the last equivalence is by Lemma~\ref{lem:dis}.
From Theorem~\ref{thm:ettinger}, after a Left move from $H^L - \ell+\Num{n+1}$
 to $H^L - \ell+\Num{n}$ , Right must have a move to a position
less than or equal to zero, say $H^{LR}  - \ell + \Num{n} \pr   0$.
The inequalities
$H\su \ell - \Num{n}$ and $ H^{LR}-\ell + \Num{n}\pr   0$ give that $H\su H^{LR}$, which contradicts that $H^L$ is not atomic-reversible. It follows therefore, that $G^L$ is not atomic-reversible.

A similar argument for $G^L$ gives a left option $H^{L'}$ such that $H^{L'}\su G^L$.
Therefore, $H^{L'}\!\su G^L\!\su H^L$. Since there is no domination,
$H^{L'}\!\sim H^L\!\sim G^L$. By induction, $H^L\eqsim G^L$.

The symmetric argument gives that each non-atomic option $H^R$ is identical to some
$G^R$. In conclusion, we have a \emph{pairwise correspondence} between
 options of $G$ and $H$ that are not atomic-reversible.\\

\noindent
{\it Part 3.} Assume that $A=H^L$ is atomic-reversible. 

The proof is divided into two cases.
\\

\noindent
{\bf Case 1:} $|\HL|>1$.

Observe that part 2 of Theorem~\ref{thm:atomic} (the atomic-reversibility theorem) applies,
 because if $A$ would have been as in part 1 of that theorem, then it would have reversed out
 (contradicting the assumptions on $G$ and $H$).
 Therefore, $A$ is the only Left option with $\LSu(H) = \RSu(A)$.

 If, for every $G^L\in \GL$ we have $\LSu(H)  \ne \RSu(G^L)$, 
then  $\LSu(G)\ne \LSu(H)$, which contradicts $G\sim H$. Thus, there is some $A'\in \GL$ with
$\LSu(H)  = \RSu(A')$ and, from the pairwise correspondence
for non-atomic-reversible options, it also follows that $A'$ is atomic-reversible. 
Therefore, we may assume that $A=a-\Num{n+1}$ and that
$A'=a'-\Num{m+1}$ for some real numbers $a, a'$, and some nonnegative integers, $n ,m$. Since $\RSu(A') = \RSu(A)$ then $a=a'$.
That $m=n$ follows from (Theorem~\ref{thm:substitute}(1)), the definition of minimal nonnegative integer, since $A^R=a-\Num{n}$ and $A'^{R}=a'-\Num{m}$ are reversing options. Therefore $A\eqsim A'$, and again, if there was another Left option, $G^L\in \GL$ with $\LSu(G) = \RSu(G^L)$, then it must have been reversed out, because of the assumption of reduced form. Hence $A'$ is the only such Left option in $G$.\\

\noindent
{\bf Case 2:} The only left option of $H$ is $A=\langle \emptyset^h\mid h-\widehat n \rangle$, for some real number $h$ and nonnegative integer $n$, 
that is $H=\langle \langle \emptyset^h\mid h-\widehat n \rangle\mid \HR\rangle$. Since $H$ cannot be reduced further, by the second part of 
Theorem \ref{thm:substitute}, it follows that $\langle \emptyset^h\mid \HR\rangle\not\in\GS$. Thus there must exist an $s$-atom, with $s < h$,
in an atomic follower of $\HR$.

Consider the Left options of $G$. By the pairwise correspondence of non-atomic-reversible options, since $\HL$ has none then neither has $\GL$. 
So, if $\GL$ has options they are atomic-reversible. 

First, suppose that $G=\langle\emptyset^h\mid \GR\rangle$. 

The non-atomic-reversible right options of $G$ and $H$ are paired 
(the conclusion of Part 2 of this proof). Since $G\in \GS$ then $\emptyset^s$ is not in any non-atomic-reversible right option of $G$
and hence $\emptyset^s$ is not in any non-atomic-reversible right option of $H$.
Thus,  either $\HR =\emptyset^s$ or $H$ has a right  atomic-reversible option $\langle s-\Num{m}\mid \emptyset^s\rangle$. In the latter case,
by Theorem \ref{thm:atomic}(2) (with Left and Right interchanged) $\Rs(H) = s$. Thus, in both cases, $\Rs(H) =s$, from which it follows that $\Rs(G)=s$ which, in turn, implies 
that $\emptyset^s$ is in  $\GR$. This again contradicts $G\in\GS$.
Therefore, $G=\langle\emptyset^h\mid \GR\rangle$
is impossible.

Therefore $G=\langle\langle \emptyset^\ell\mid \ell-\widehat m \rangle\mid \GR\rangle$ for some $\ell$ and $m$. 
Since $\Ls(G)=\Ls(H)$ it follows that $\ell = h$.
By Theorem \ref{thm:substitute}, since $G\sim H$, the number of waiting moves (for Right), is given by exactly the same definition as for $H$.  Hence, 
$m=n$ and $\GL=\{A\}$.
\\

 In all cases, we have shown that $\HL$ is identical to $\GL$. The proof for $\HR$ and $\GR$ is similar. Consequently $G\eqsim H$.
\end{proof}

The next result is immediate. It allows us to talk about
\textit{the canonical form} of a game/congruence class.

\begin{corollary}[Canonical Form]
Let $G\in\GS$. There is a unique reduced form of~$G$.
\end{corollary}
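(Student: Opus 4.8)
The plan is to obtain the corollary by combining two facts that are already available. Each of the four reduction operations of Theorems~\ref{thm:domination}, \ref{thm:nonatomic}, \ref{thm:atomic}, and~\ref{thm:substitute} replaces a game by an equivalent one, so any game reachable from $G$ by a sequence of reductions lies in the same congruence class as $G$; and Theorem~\ref{thm:uniqred} asserts that two equivalent reduced games are identical. Two things then need to be checked: that a reduced form of $G$ \emph{exists}, and that it is \emph{unique}.

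Uniqueness is the immediate half. If $\hat G_1$ and $\hat G_2$ are both reduced forms of $G$, then, since every reduction preserves the class, $\hat G_1 \sim G \sim \hat G_2$, hence $\hat G_1 \sim \hat G_2$; as both are reduced, Theorem~\ref{thm:uniqred} forces $\hat G_1 \eqsim \hat G_2$. For existence I would argue that the rewriting process terminates, by induction on the birthday. Assuming inductively that every proper follower has already been brought to reduced form, it remains to reduce the root: its left and right option sets are then finite sets of canonical games, on which Domination can be applied only finitely often (it strictly shrinks a finite option set), while the reversibility reductions bypass or eliminate options. By Definition~\ref{def:reducedform}, a game to which none of the four theorems applies is reduced, so a game with no further applicable reduction is reached, and by the first paragraph it is equivalent to $G$.

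The step I expect to be the main obstacle is precisely this termination claim. Birthday alone will not serve as a termination measure, since, as observed before Theorem~\ref{thm:substitute}, the Substitution reduction need not lower the birthday; moreover bypassing a non-atomic-reversible option grafts in the replacement set $B^{\mathcal L}$, which can reintroduce structure at the root. The careful argument must supply a well-founded measure that decreases under all four reductions at once---for example a lexicographic combination of birthday with a multiset measure of the option structure---and must exploit the special shape of the Substitution and Atomic-Reversibility outputs: once an atomic-reversible option has been put in the canonical form $\ell-\Num{\,n\!+\!1\,}$ (with $n$ minimal, as in Theorem~\ref{thm:substitute}), it is a fixed point of both of those reductions and so cannot trigger an infinite loop. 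Granting termination, the corollary follows, and we may speak of \emph{the} canonical form of $G$.
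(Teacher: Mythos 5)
Your proposal coincides with the paper on the only half the paper actually argues: the paper gives no proof of this corollary at all beyond the remark that it is ``immediate,'' the intended content being precisely your first paragraph---each of Theorems~\ref{thm:domination}, \ref{thm:nonatomic}, \ref{thm:atomic}, \ref{thm:substitute} preserves the congruence class, so any two reduced forms of $G$ are equivalent, and Theorem~\ref{thm:uniqred} then forces them to be identical. Where you genuinely differ is in refusing to take \emph{existence} for granted: the paper silently assumes that repeatedly applying the reductions terminates, whereas you correctly observe that this is not automatic, since Substitution need not lower the birthday (the paper itself concedes this at the start of Section~\ref{sec:reductions}) and Reversibility~1 grafts the replacement set $B^{\mathcal L}$ into the root. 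Your sketch contains the right ingredients for closing this: reduce all proper followers first by induction on birthday; note that Domination, Reversibility~1, part~1 of Theorem~\ref{thm:atomic}, and part~2 of Theorem~\ref{thm:substitute} strictly shrink the game tree, while part~2 of Theorem~\ref{thm:atomic} at worst relabels one atom (after which it no longer applies to that option); so the only growth comes from part~1 of Theorem~\ref{thm:substitute}, whose output $\ell-\Num{n+1}$ (with $n$ minimal) is, as you say, a fixed point of the reversibility and substitution reductions. The one check you omit is that this fixed-point property survives subsequent reductions applied to \emph{other} options at the root; it does, because $n$ is determined by the condition $G\su \ell-\Num{n}$, which depends only on the congruence class, and every reduction stays within that class. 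With that remark, a lexicographic measure (number of options not yet in substituted form, then total tree size) is genuinely decreasing and termination follows. So your argument is correct modulo this last piece of bookkeeping, and it is in fact more complete than the paper's own treatment, which addresses uniqueness only.
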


Finally, the canonical form can be used for induction proofs since it has 
the minimum birthday of all games in its congruence class. Incidentally, it has the 
least width  (number of leaves) of all such trees. However, minimum birthday and minimum width  is not a characterization
of canonical form. Let  
$G=\langle -1,1-\Num{1}\mid \langle2\mid 2\rangle\rangle$ and
 $H=\langle -1,\langle \emptyset^1\mid \langle \emptyset^1\mid \emptyset^2\rangle\rangle\mid \langle2\mid 2\rangle\rangle$. 
 Then $G$ is the canonical form of $H$ but the game trees of the two games have the same depth and width.

\section{Additive Inverses and Conjugates}\label{sec:conjugates}
From the work in Mis\`ere games comes the following concept.

\begin{definition}Let $\mathcal{X}$ be a class of combinatorial games with defined disjunctive sum and game comparison. It has the \textit{Conjugate Property} if for each game $G\in \mathcal{X}$ for which there exists an inverse, that is a game $H\in \mathcal{X}$ such that $G+H\sim 0$, then $H=\Conj{G}$.
\end{definition}

\begin{theorem}\label{Conjugate}$\GB$ has the Conjugate Property.
\end{theorem}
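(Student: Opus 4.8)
The plan is to reduce the statement to a self-conjugate normalization and then close the remaining gap with the strictness and cancellation lemmas. First I would observe that inverses are unique in a commutative monoid: if $G+H\sim 0$ and $G+H'\sim 0$ then, by Corollary~\ref{cor:congruence}, $H\sim H+(G+H')\sim (H+G)+H'\sim H'$. Hence it suffices to prove that, whenever $G$ admits some inverse at all, $\Conj G$ is itself an inverse, i.e. $G+\Conj G\sim 0$; the Conjugate Property then follows from uniqueness. Recall that this equivalence fails for non-invertible $G$, as the example $\langle\emptyset^\ell\mid\emptyset^r\rangle$ with $\ell<r$ shows, so the existence of an inverse must be used in an essential way.

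Next I would record two elementary facts about conjugation. From $\LS(\Conj Y)=-\RS(Y)$ (an easy induction) one gets $\LS(\Conj A+X)=-\RS(A+\Conj X)$, so conjugation is an order-reversing involution that respects disjunctive sums and $\sim$, with $\Conj 0\eqsim 0$. Consequently $G+\Conj G$ is self-conjugate, $\Conj{(G+\Conj G)}\eqsim G+\Conj G$, which forces $G+\Conj G\su 0 \Leftrightarrow G+\Conj G\pr 0$; thus it is enough to prove a single one of these inequalities (equivalently, that $G+\Conj G$ is comparable to $0$). Moreover, applying Lemma~\ref{lem:dis} with the invertible element $J=G$ turns $\Conj G\su H$ into $G+\Conj G\su 0$ and $H\su\Conj G$ into $H+\Conj H\pr 0$; since $G+\Conj G$ and $H+\Conj H$ are mutually inverse self-conjugate games, the whole theorem collapses to the single claim: every self-conjugate invertible game $D$ satisfies $D\sim 0$.

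For the self-conjugate game $D=G+\Conj G$ I would then invoke the mirroring strategy already exploited in Theorem~\ref{thm:adjoint}: the second player copies each opponent move across the two conjugate components, so that play stays in the form $U+\Conj U$ and terminates when $U$ is purely atomic, $U\eqsim\langle\emptyset^a\mid\emptyset^b\rangle$ with $a\le b$. This yields $\LS(D)\le 0\le\RS(D)$, and self-conjugacy also gives $\LS(D)=-\RS(D)$. These bounds hold for every $D$, invertible or not, so they are necessary but not yet sufficient.

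The main obstacle is to upgrade the bound at $X=0$ to the full equality $\RS(D+X)=\RS(X)$ for all $X$, which (again by self-conjugacy) is exactly $D\sim 0$. This is where invertibility must genuinely enter, and I would attack it with Lemma~\ref{lem:strict} together with the constructive comparison Theorem~\ref{thm:comparison}, by induction on birthday. Concretely, from $G+H\sim 0$ one reads off, via Theorem~\ref{thm:comparison} applied to $0\su G+H$ and to $G+H\su 0$, that every Left option of $G+H$ has a Right reversing option $\pr 0$ and every Right option a Left reversing option $\su 0$; feeding these option-wise reversing facts through the induction hypothesis, and using Lemma~\ref{lem:strict} to forbid a strict gap from surviving in a sum, should verify the three conditions of Theorem~\ref{thm:comparison} for $\Conj G\su H$, with the pass-allowed stop condition handled by the adjoint and pass-allowed bounds of Theorems~\ref{thm:adjoint} and~\ref{thm:pallowscores}. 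I expect the delicate point to be the tempo bookkeeping of the copying strategy against an arbitrary test game $X$ — ensuring the conjugate components contribute exactly $0$ while $X$ contributes precisely its own stop — and closing the induction; this, rather than any single inequality, is the technical heart of the argument.
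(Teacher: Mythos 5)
Your preparatory reductions are sound: uniqueness of inverses in the commutative monoid (via Corollary~\ref{cor:congruence}), conjugation as a sum-preserving, order-reversing involution, the self-conjugacy of $G+\Conj{G}$, the cancellation manipulations via Lemma~\ref{lem:dis}, and the mirroring bounds $\LS(G+\Conj{G})\le 0\le \RS(G+\Conj{G})$ are all correct, and together they validly reduce the theorem to the claim: \emph{if $G$ is invertible then $G+\Conj{G}\sim 0$}. This is a legitimately different opening from the paper, which instead works with canonical forms and proves $H\eqsim\Conj{G}$ option by option.

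The fatal problem is the next step, where you ``collapse'' the theorem to the statement that \emph{every} self-conjugate invertible game $D$ satisfies $D\sim 0$. That statement is false. Take $D=\Num{*}=\langle 0\mid 0\rangle$, the image of the Normal-play game $*$. It is identically self-conjugate; it is invertible, since $\Num{*}+\Num{*}\eqsim\langle \Num{*}\mid\Num{*}\rangle\eqsim\Num{*+*}$ and $*+*=0$ in Normal-play, so Theorem~\ref{thm:order} gives $\Num{*}+\Num{*}\sim 0$; and yet $\Num{*}\not\sim 0$, because $*\parallel 0$ in Normal-play and Theorem~\ref{thm:order} is an order-embedding. (Directly: condition~3 of Theorem~\ref{thm:comparison} fails for $\Num{*}\su 0$, since the only Right option of $\Num{*}$ is $0$, which has no Left option, while $0$ has no Right option.) Note moreover that $\LS$, $\RS$, $\LSu$, $\RSo$ of $\Num{*}$ all equal $0$, exactly as for the game $0$: every numerical or stop-based tool you propose to deploy (mirroring bounds, adjoints, pass-allowed stops, Lemma~\ref{lem:strict}) cannot see the difference between $\Num{*}$ and $0$; only the recursive option conditions of Theorem~\ref{thm:comparison} can, and handling those is precisely the combinatorial work your sketch omits. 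So the abstraction from ``$D=G+\Conj{G}$ with inverse $H+\Conj{H}$'' to ``self-conjugate and invertible'' discards exactly the structure that makes the theorem true, and the claim you actually need is never proved: your final paragraph is a statement of intent (``should verify'', ``I expect the delicate point to be'') rather than an argument. For comparison, the paper's proof must work with canonical forms and split into the pairing of non-atomic-reversible options (closed by a finite chain of strict inequalities that is contradicted upon summation) and the atomic-reversible case (closed by Theorem~\ref{thm:substitute} and the pass-allowed-stop estimates of Theorem~\ref{thm:pallowscores}); no substitute for either mechanism appears in your outline.
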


\begin{proof}

Consider $G,H\in \GB$ in their reduced forms, such that $G+H\sim 0$.
We will prove, by induction on the birthday of $G+H$, that we must have $H= \Conj{G}$.\\

\noindent
\textbf{Case 1:} The game $G+H$ is purely-atomic.
 Let $G=\left\langle\emptyset^{\ell}\,|\,\emptyset^{r}\right\rangle$ where $\ell\le r$.
 Then, by definition,
$\Conj{G} = \left\langle\emptyset^{-r}\,|\,\emptyset^{-\ell}\right\rangle$ and
 $\Conj{G}\in \GB$. Let $H=\pura{-\ell}{-r}$. Then $G+H\eqsim \pura{0}{0}=0$, but
 $H\in\GB$ if and only if $\ell=r$. Hence, the game $\Conj{G}$ is the inverse to
 $G$ if and only if $\ell = r$. Thus, a purely atomic game is invertible if and only if 
 it is a number. \\

In the below proof, because of numerous algebraic manipulations, we will revert to the short hand notation 
$-G = \Conj{G}$, if the existence of a negative is given by induction.\\

\noindent
\textbf{Case 2:} The game $G+H$ has at least one option.
We may assume that $G$ and $H$ are in their respective canonical forms.
Let $J = G+H \sim 0$.
It is a consequence of Theorem~\ref{thm:ettinger} that for all Left moves $J^L$,
 there exists $J^{LR}$ such that $J^{LR}\pr 0$.
 Without loss of generality, we will assume that $J^L=G^{L}+H$. There are two cases:\\

\noindent
\textbf{Case 2a:} Suppose there exists  a non-atomic-reversible $G^{L}\in\GL$.

(This option is not reversible, because only atomic-reversible options may exist in the reduced form.) We prove two claims.\\

\noindent \textit{Claim (i):} There exists $H^{R}$ such that $G^{L}+H^{R}\pr 0$.

  If there is a good Right reply  $G^{LR}+H\pr 0$, after adding $G$ to both sides
  (Theorem 14), we would have $G^{LR}\pr G$. This is a contradiction,
  since $G^L$ is not a reversible option. Therefore, there exists $H^R$ with
  $G^L+H^R\pr 0$.\\

\noindent \textit{Claim (ii):} With $H^R$ as in (i), $G^L+H^R\sim 0$.

  We have that $G^{L}+H^{R}\pr 0$. Suppose that $G^{L_1}+H^{R_1}\prec 0$, where 
  we index the options starting with $G^L=G^{L_1}$ and $H^R=H^{R_1}$.

 Consider the Right move in $G+H$ to $G+H^{R_1}$. Since $G+H\sim 0$, i.e., $G+H\su 0$,
 then there exists a Left option such that $(G+H^{R_1})^L\su 0$.

 Suppose that $G+H^{{R_1}L}\su 0$. Then, by adding $H$ to both sides,
 we get $H^{{R_1}L}\su H$. Therefore, since $H$ is in canonical form,
 $H^{R_1}$ is an atomic-reversible option and, by Theorem~\ref{thm:substitute}, 
 $H^{R_1} = r+\Num{n+1}$  for some real $r$ and nonnegative integer $n$.
 The two inequalities $G^{L_1}+H^{R_1}\prec 0$ and $G+H^{{R_1}L}\su 0$
 become $G^{L_1}+r+\Num{n}\prec 0$ and $G+r+\Num{n}\su 0$, respectively.
 In $G^{L_1}+r+\Num{n+1}$, against the Left move to $G^{L_1}+r+\Num{n}$,
 Right must have a move of the form $G^{{L_1}R}+r+\Num{n}\pr 0$. Since
 $r+\Num{n}$ is invertible, $G^{{L_1}R}+r+\Num{n}\pr 0$ and
 $G+r+\Num{n}\su 0$ leads to $G^{{L_1}R}\pr G$ (Lemma~\ref{lem:dis}),
  i.e., $G^{L_1}$ is reversible, which is a contradiction.

Consequently, we may assume that there is a non-reversible option, $G^{L_2}$,
 such that $G^{L_2}+H^{R_1}\su 0$. If $G^{L_2}+H^{R_1}\sim 0$ then,
 by induction, $H^{R_1}=-G^{L_2}$ (since $G$ and $H$ are in canonical form).
 Since $0\su G^{L_1}+H^{R_1} =  G^{L_1}-G^{L_2}$, then $G^{L_2}\su G^{L_1}$, which is a contradiction, because $\GL$ has no dominated options.
Therefore, $G^{L_2}+H^{R_1}\succ 0$. 

By Claim (i), there must exist a Right option in $H$, $H^{R_2}$, corresponding to $G^{L_2}$, and so on. If we repeat the process, we now have the following inequalities:

      \begin{eqnarray*}
      G^{L_1}+H^{R_1}\prec 0, &\quad& G^{L_2}+H^{R_1}\succ 0\\
      G^{L_2}+H^{R_2}\prec 0, &\quad&  G^{L_3}+H^{R_2}\succ 0\\
       \ldots ,
      \end{eqnarray*}
but the number of options is finite. Thus, without loss of generality, we may assume that
there is some $m$ such that $G^{L_{1}}+H^{R_m}\succ 0$  (re-indexing if necessary).

Because the inequalities are strict, summing the left-hand and the right-hand inequalities gives, respectively,
\[ \sum_{i=1}^m G^{L_i}+\sum_{i=1}^m H^{R_i}\prec 0\quad\mbox{ and } \quad
 \sum_{i=1}^m G^{L_i}+\sum_{i=1}^m H^{R_i}\succ 0\]
 which is a contradiction.

 Therefore, we conclude that $G^{L}+H^{R}\sim 0$ and, by induction,
 that $H^R= -G^L$.\\
 
\noindent
\textbf{Case 2b:} Suppose there exists an atomic-reversible option, $A\in\GL$.

Since $A$ is atomic-reversible, it follows, by Theorem~\ref{thm:substitute}, that
$A= \ell-(\Num{n+1})$, where $n$ is the minimum nonnegative integer such that $G\su \ell-\Num{n}$, and where $\ell = \Ls (B)$ is a real number (where $B$ is the reversing option). 
\begin{enumerate}
  \item[(i)] Suppose first that there is some Right option in $H$. We prove four claims.

  \begin{enumerate}[(a)]
   \item $\overline{Rs}(G)\geqslant \ell $.

Since $G\su \ell - \Num{n}$, we get $G+\Num{n}\su \ell $. Hence,
$\Rso(G)\ge Rs(G+\Num{n}) \ge \ell $, where the first inequality holds because Left can pass.

    \item There exists an atomic-reversible option $H^R\in\HR$.

Suppose not; we will argue that this implies $\overline{Rs}(G+H)>0$, contradicting
$G+H\sim 0$ (Theorem~\ref{thm:ettinger}). Because $H$ has no atomic-reversible
Right option, we saw in Case 2a that for all $H^R$ there exists non-atomic-reversible
$G^L$ such that $G^L+H^R\sim 0$. By induction, $G^L\sim -H^R$. Because
$A=\ell - \widehat{n+1}$ is an atomic-reversible option in $\GL$, by Theorem~\ref{thm:atomic}(2),
$\underline{Rs}(G^L)<\Rsu(A) = \ell$. Hence,
\begin{align}\label{wehaveseen}
\overline{Ls}(H^R)=-\underline{Rs}(-H^R)=-\underline{Rs}(G^L) > -\ell,
\end{align}
where the first equality is by definition of the conjugate of a game. This holds for all $H^R\in\HR$
 and so, $\overline{Rs}(H) > -\ell$.  Therefore, by Theorem~\ref{thm:pallowscores},
\begin{align}\label{asbefore}
\overline{Rs}(G+H)\geqslant \overline{Rs}(G)+\overline{Rs}(H)\geqslant \ell + \overline{Rs}(H) > \ell-\ell = 0,
\end{align}
and the claim is proved.
    \item The atomic-reversible Right option of $H$ is $-\ell + \Num{m+1}$ (where $m$ is minimum such that  $H\pr -\ell+\Num{m}$).

    We have seen in the inequality (\ref{wehaveseen}) that for all non-atomic-reversible $H^R$,
    $\Lso(H^R) > -\ell $. If the only atomic-reversible Right option of $H$ was
    $-s+\Num{m+1}$ and $\ell>s$, we would have $\overline{Rs}(H) > -\ell$,
    leading to the same contradiction as obtained in the inequality (\ref{asbefore}). Suppose, instead,
    that  the only atomic-reversible Right option of $H$ were $-s+\Num{m+1}$ with $\ell < s$.
    By definition of a reversing option (for an atomic-reversible Right option), we have that
    $H\pr -s+\widehat{m}$. Altogether,  $\underline{Ls}(H)\le Ls(H-\widehat{m})\leqslant -s<-\ell $.
    Therefore, by Theorem~\ref{thm:pallowscores}, $\Lsu(G+H)\le \Rsu(G)+\Lsu(H)\le \ell -s < \ell - \ell = 0$.
    The two contradictions together imply $s=\ell $.

         \item Finally, $m=n$.

Consider the integers, $n$ and $m$ as previously defined. They are minimal such that
$G\su \ell -\widehat {n}$ and $H\pr -\ell +\widehat{m}$, respectively. If $n\neq m$, say
$n<m$, from $G\su \ell -\widehat{n}$, adding $H$ to both sides gives
$0\su H+\ell -\widehat{n}\Rightarrow  H\pr  -\ell +\widehat{n}$. This is a contradiction
($m$ is not minimal). Hence, we must have $m=n$.
    \end{enumerate}
\noindent
Thus, we have proved that  if  $A = \ell -\widehat{n}$ (in reduced form) is a Left
atomic-reversible option of $G$, then there is an $H^R\in\HR$ with $H^R = -\ell +\widehat{n}=-A$.

  \item[(ii)] Since $A\in\GL$ is an atomic-reversible option, then $\HR$ is not an atom.

First, if it were true that  $\HR=\emptyset^{-s}$, for some real number $s$, then this would force $s=\ell$.
This follows by an  argument  similar to that in 2b(i.c).  $\overline{Rs}(G)\geqslant \ell$ holds by
2a (i.a). Thus, if $\overline{Rs}(H)=-s>-\ell$, then
$\overline{Rs}(G+H)\ge \Rso(G) + \Rso(H) > 0$.
Also, if $\overline{Rs}(H)=-s<-\ell $, then, because of the guaranteed property,
$\underline{Ls}(H)\leqslant -s<-\ell $. So, by $\underline{Ls}(G)=\ell $, we have
$\underline{Ls}(G+H)\le \Lsu(G) + \Lsu(H) < 0$.
The inequalities are contradictory, and so $s=\ell$.

Suppose therefore that $\HR=\emptyset^\ell$.
In this case, $A=\ell-\widehat{n+1}$ is the only Left option of $G$; any other options would be
 non-atomic-reversibles (by domination) paired in $H^\mathcal{R}$ (by Case 2a),
but there are none.
 Now, the non-atomic-reversible options of $\HL$ and $\GR$ are paired
 and since $G\in\GS$ then $\ell$ is less than or equal to all the  scores in the games of
 $G^\mathcal{R}$. Since $n\ge 0$ then,  by Theorem~\ref{thm:substitute},
 $\GL$ could be replaced by $\emptyset^{\ell}$ contradicting that $G$ was in reduced form.
  \end{enumerate}

We have seen that each $G^L$ has a corresponding $-G^L$ in the set of Right options of $H$.
This finishes the proof.
\end{proof}

As a final comment, not every game is invertible and we do not have a full 
characterization of invertible games. We do know that zugzwang games do not have inverses.

\begin{theorem} Let $G$ be a game with $Ls(G) < Rs(G)$. Then $G$ 
is not invertible.\end{theorem}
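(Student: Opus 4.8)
The plan is to reduce the statement to a single stop computation by invoking the Conjugate Property. By Theorem~\ref{Conjugate}, if $G$ had any inverse then that inverse would be forced to be $\Conj{G}$; hence $G$ is invertible if and only if $G + \Conj{G} \sim 0$. So it suffices to prove that $G + \Conj{G} \not\sim 0$ whenever $\Ls(G) < \Rs(G)$. The natural first attempt is to test the congruence against $X = 0$ and compute the ordinary stops $\Ls(G+\Conj G)$ and $\Rs(G + \Conj G)$; I expect this to fail, since for a zugzwang such as $G = \langle 1 \mid 3\rangle$ one checks that both ordinary stops of $G + \Conj G$ equal $0$, matching those of $0$. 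The zugzwang only reveals itself once a player is forced to move, so the distinguishing game should supply an unbounded reservoir of waiting moves; concretely, I would detect the discrepancy through the pass-allowed Left-stop $\Lsu$.

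The key steps are then as follows. First I would record the conjugate relation for pass-allowed stops, $\Lsu(\Conj G) = -\Rso(G)$, which follows directly from Definition~\ref{def:strongstop} together with the routine identities $\Ls(\Conj W) = -\Rs(W)$ and $\Conj{G + \Num n} = \Conj G - \Num n$. Next I would apply the upper bound of Theorem~\ref{thm:pallowscores} to $H = \Conj G$, and combine it with the monotonicity of the stops under adding waiting moves, namely $\Lsu(G) \le \Ls(G)$ (take $n = 0$ in the defining minimum) and $\Rso(G) \ge \Rs(G)$ (take $n = 0$ in the defining maximum). This yields the chain
$$\Lsu(G + \Conj G) \le \Lsu(G) + \Lsu(\Conj G) = \Lsu(G) - \Rso(G) \le \Ls(G) - \Rs(G) < 0,$$
the last inequality being exactly the zugzwang hypothesis. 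Finally, since $\Lsu$ is constant on congruence classes (if $A \sim B$ then $\Ls(A - \Num n) = \Ls(B - \Num n)$ for every $n$, so the defining minima agree) and $\Lsu(0) = 0$, the strict inequality $\Lsu(G + \Conj G) < 0$ forces $G + \Conj G \not\sim 0$, completing the argument.

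The main obstacle, and the reason the result is not merely a one-line consequence of the definition of the stops, is precisely the failure of the naive $X = 0$ test illustrated above: the ordinary Left- and Right-stops of $G + \Conj G$ need not expose the zugzwang, because the second player can always mirror and keep the position balanced until forced to break symmetry. The conceptual insight is that invertibility must be probed by a game that removes this freedom, and waiting moves are exactly the tool for that; the technical work is therefore concentrated in the pass-allowed-stop bookkeeping---verifying $\Lsu(\Conj G) = -\Rso(G)$ and correctly orienting the inequalities of Theorem~\ref{thm:pallowscores}---rather than in any intricate strategic analysis.
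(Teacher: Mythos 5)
Your proposal is correct and takes essentially the same route as the paper: reduce via the Conjugate Property (Theorem~\ref{Conjugate}) to showing $G+\Conj{G}\not\sim 0$, then combine Theorem~\ref{thm:pallowscores} with $\Lsu(G)\le\Ls(G)$ and the conjugate identity to obtain $\Lsu(G+\Conj{G})\le \Ls(G)-\Rs(G)<0$. The only cosmetic differences are that you phrase the conjugate step as $\Lsu(\Conj{G})=-\Rso(G)$ where the paper uses $\Lsu(\Conj{G})\le\Ls(\Conj{G})=-\Rs(G)$, and that you exclude $G+\Conj{G}\sim 0$ by invariance of $\Lsu$ under $\sim$ whereas the paper cites Theorem~\ref{thm:ettinger}.
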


 \begin{proof}
 Suppose $G$ is a game with $\Ls(G) < \Rs(G)$. If $G$ is invertible then $G+\Conj{G}=0$, which by Theorem~\ref{thm:ettinger} implies that $\Lsu(G+\Conj{G})=0$.

Now,
\begin{eqnarray*}
\Lsu(G+\Conj{G}) &\leqslant& \Lsu(G) + \Lsu(\Conj{G})\qquad \mbox{(by Theorem~\ref{thm:pallowscores})}\\
&\leqslant& \Ls(G) + \Ls(\Conj{G}) \\
&=& \Ls(G) - \Rs(G)\\
&<& 0
\end{eqnarray*}
which contradicts $ \Lsu(G+\Conj{G})=0$ and finishes the proof.
\end{proof}

The converse is not true. For example, $G = \langle\langle -1\mid 1\rangle \mid 0 \rangle$ 
is not invertible, since $\Lsu(G+\Conj{G})=-1\ne 0$, and is not zugzwang since $\Ls(G)>\Rs(G)$.

\section{A Scoring Games Calculator}\label{sec:calc}

 The translation of a guaranteed game position to its canonical scoring value is not a trivial 
 computation task and cannot be done manually except for very simple examples.
 A computer program is required for more complex positions. 
 The Scoring Games Calculator (SGC) is such a program. It is implemented as a set of
 Haskell modules that run on an interpreter available in any Haskell distribution or
 embedded in a program that imports these modules.
 
 The SGC has two main modules, \texttt{Scoring} and \texttt{Position}, that act as containers of two 
 data types: \texttt{Game} and \texttt{Position}. The first module deals with scoring game values
 and the second with board positions given a ruleset.
 
 Game values represent values from set $\mathbb{S}$ like $\textless 1|\emptyset^{3} \textgreater$. This type
 includes an extensive list of Haskell functions that mirror the mathematical functions
 presented in this article. One simple example is predicate \texttt{guaranteed} that checks if a
 game value in $\mathbb{S}$ is also in $\mathbb{GS}$. Another operation is the sum of games that takes
 two values in $\mathbb{GS}$ and computes their disjunctive sum. 
 
 Position values represent board positions. Type \texttt{Position} is an abstract type. It encloses a set
 of services useful for all games, like reading a position from file or converting a
 position to its scoring value. These functions are only able to work when a concrete ruleset is implemented.
 Given a game, say Diskonnect, there should be a module \texttt{Diskonnect} 
 that imports module \texttt{Position}, and is required
 to implement the Diskonnect ruleset.
 Almost all effort to define a new game focus in the implementation of function \texttt{moves} 
 that, given a board position and the next player,
 returns the list of all possible next positions. With this, \texttt{Position} is able to construct a game tree
 for a given board position and to translate that position into its scoring value.
 
 The scoring universe together with its main theorems concerning reductions and comparisons 
 all have a strong recursive structure that fits quite well into a functional programming 
 language like Haskell. Not all mathematical definitions are simply translations to functions, 
 but some are. For example, the implementation of left-r-protected mirrors quite closely its definition,

\begin{verbatim}
lrp :: NumberData -> Game -> Bool
lrp r g = 
  ls_d g >= r &&
  for_all [ 
     for_any [lrp r gRL | gRL <- leftOp gR] | gR <- rightOp g]
\end{verbatim}

where \verb|ls_d| is 
$\underline{Ls}$ and syntax \texttt{[f x|x<-list]} defines list comprehensions.

 The SGC includes too many functions to be described here\footnote{The source code 
 and a user guide presenting all functionalities are available at 
 \url{https://github.com/jpneto/ScoringGames}}. Currently, the following 
guaranteed rulesets are implemented: Diskonnect, Kobber, TakeSmall and TakeTall.


\bibliographystyle{plain}
\bibliography{games4}

\end{document}